\newcommand{\soc}{\mbox{\rm{Soc}}}
\newcommand{\topp}{\mbox{\rm{Top}}}
\newcommand{\length}{\mbox{\rm{l}}}
\newcommand{\start}{\mbox{\rm{s}}}
\newcommand{\target}{\mbox{\rm{t}}}
\newcommand{\fin}{\mbox{\rm{findim}}}
\newcommand{\prj}{\mbox{\rm{P}}}
\newcommand{\fidim}{\mbox{\rm{$\phi$dim}}}
\newcommand{\psidim}{\mbox{\rm{$\psi$dim}}}
\newcommand{\fdim}{\mbox{\rm{findim}}}
\def\ker{\mbox{\rm{Ker}}}
\def\mod{\mbox{\rm{mod}}}
\def\ind{\mbox{\rm{ind}}}
\def\add{\mbox{\rm{add}}}
\def\pd{\mbox{\rm{pd}}}
\def\gd{\mbox{\rm{gldim}}}
\def\rk{\hbox{\rm{rk}}}
\begin{document}
\newcommand{\mono}[1]{%
\gdef\puA{#1}}
\newcommand{\puA}{}
\newcommand{\faculty}[1]{%
\gdef\puC{#1}}
\newcommand{\puC}{}
\newcommand{\facultad}[1]{%
\gdef\puD{#1}}
\newcommand{\puD}{}
\newcommand{\N}{\mathbb{N}}
\newcommand{\Z}{\mathbb{Z}}
\newcommand{\gl}{\mbox{\rm{gldim}}}
\newtheorem{teo}{Theorem}[section]
\newtheorem{prop}[teo]{Proposition}
\newtheorem{lema}[teo] {Lemma}
\newtheorem{ej}[teo]{Example}
\newtheorem{obs}[teo]{Remark}
\newtheorem{defi}[teo]{Definition}
\newtheorem{coro}[teo]{Corollary}
\newtheorem{nota}[teo]{Notation}

\newenvironment{note}{\noindent Notation: \rm}



\title{Gaps for the Igusa-Todorov function}

\author{Marcos Barrios}
\address{Universidad de La Rep\'ublica, Facultad de Ingenier\'ia -  Av. Julio Herrera y Reissig 565, Montevideo, Uruguay}
\ead{marcosb@fing.edu.uy}

\author{Gustavo Mata}
\address{Universidad de la Rep\'ublica, Facultad de Ingenier\'ia -  Av. Julio Herrera y Reissig 565, Montevideo, Uruguay} 
\ead{gmata@fing.edu.uy}

\author{Gustavo Rama}
\address{Universidad de la Rep\'ublica, Facultad de Ingenier\'ia -  Av. Julio Herrera y Reissig 565, Montevideo, Uruguay} 
\ead{grama@fing.edu.uy}

\begin{abstract}
For a finite dimensional algebra $A$ with $0 < \fidim (A) = m < \infty$ we prove that there always exist modules $M$ and $N$ such that $\phi(M) = m-1$ and $\phi (N) = 1$.
On the other hand, we see an example of an algebra that not every value between $1$ and its $\phi$-dimension is reached by the $\phi$ function. We call that values gaps and we prove that the algebras with gaps verifies the finitistic  conjecture.

\end{abstract} 

\begin{keyword}Igusa-Todorov function, finitistic dimension, Radical square zero algebra.\\
2010 Mathematics Subject Classification. Primary 16W50, 16E30. Secondary 16G10.
\end{keyword}

\maketitle

\section{Introduction}

In an attempt to prove the Finitistic Conjecture, Igusa and Todorov defined in \cite{IT} two functions from the objects of $\mod A$ (the category of right finitely generated modules over an Artin algebra $A$) to the natural numbers, which generalizes the notion of projective dimension. Nowadays they are known as the Igusa-Todorov functions, $\phi$ and $\psi$.
Several recent works dedicated to the study of this functions and their associated dimensions shows the growing relevance of the subject. Also, a better knowledge about the Igusa-Todorov functions could help to a more profound understanding of certain homological conjectures, such as the Finitistic Conjecture.

This article is organized as follows: the the preliminary section are devoted to fixing the notation and recalling the basic facts needed in this article.
In section $3$ we prove that certain functors ralate the $\phi$ and $\psi$ functions between two algebras.
In section $4$ we prove the following theorem regarding the existence of some values for the $\phi$ function for any algebra.\\

\underline{\bf{Theorem:}} Let $A$ be a finite dimensional algebra. If $\fidim (A) > 0$ then there exist $M\in \mod A$ such that $\phi (M) = 1$. In the case that $\fidim(A)$ is finite, then there exist $N\in  \mod A$ such that $\phi (N) = \fidim(A) - 1$.\\

We also see in Example \ref{ejemplo_gap} that the theorem is not true for others values between $1$ and $\fidim(A)$. If, for an algebra $A$, there is a natural number $\lambda$ not reached by the $\phi$ function, we say that $A$ has a gap. For the case of algebras with gaps we prove the following theorem.\\

\underline{\bf{Theorem:}} If $A$ is a finite dimensional algebra with gaps, then:
$$\fdim(A)<\lambda< \fidim(A),$$
for every gap $\lambda$. Hence the finitistic conjecture holds for $A$.\\

On the other hand, we give conditions in quivers such that its associated radical square zero algebra the $\phi$ function reaches every value between $0$ and $\fidim(A)$. Finally, in section $5$, we introduce the partial $\phi$-dimension and use them to characterize the selfinjective algebras.

\section{Preliminaries}

Let $A$ be a finite dimensional basic algebra defined over a field $\Bbbk$. The category of finite dimensional right $A$-modules will be denoted by $\mod A$, the indecomposable modules of $A$ by $\ind A$, and the set of isoclasses of simple $A$-modules by $\mathcal{S} (A)$. If $S \in \mathcal{S} (A)$, $\prj(S)$ denotes the indecomposable projective associated to $S$. For a $A$-module $M$ we denote by $\soc(M)$ its socle and by $\topp(M)$ its top.\\ 
Given an $A$-module $M$ we denote its projective dimension by $\pd(M)$ and the $n^{th}$-syzygy by $\Omega^n(M)$. We recall that the global dimension of $A$, which we denote by $\gd(A)$, is the supremum  of the set of projective dimensions of $A$-modules. The global dimension can be a natural number or infinity. The finitistic dimension of $A$, denoted by $\fin(A)$, is the supremum of the set of projective dimensions of $A$-modules with finite projective dimension.\\
If $Q$ is a finite connected quiver, $\Bbbk Q$ denotes its path algebra. Given $\rho$ a path in $\Bbbk Q$, $\length(\rho)$, $\start(\rho)$ and $\target(\rho )$ denote the length, start and target of $\rho$ respectively. $J_Q$ denotes the ideal of $\Bbbk Q$ generated by the arrows of $Q$. In case that there is no possible misinterpretation we denote that ideal by $J$. 

The following theorem will be useful.

\begin{teo}[\cite{N}]\label{Nakayama}

Let $A$ be a finite dimensional basic algebra. The following statements are equivalent:
\begin{enumerate}

\item  $A$ is self-injective

\item The rule $S \rightarrow \soc(\prj(S))$ defines a permutation $\nu :\mathcal{S} (A) \rightarrow \mathcal{S}(A).$

\end{enumerate}  

\end{teo}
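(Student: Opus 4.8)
The plan is to prove the two implications separately, using the standard duality $D=\mathrm{Hom}_{\Bbbk}(-,\Bbbk)\colon \mod A \to \mod A^{\op}$ together with the elementary fact that over an Artin algebra the socle of any module is an essential submodule, so that a module with simple socle $T$ embeds in the injective envelope $I(T)$ of $T$. First I would record what statement (2) actually asserts: for the rule $S\mapsto \soc(\prj(S))$ to land in $\mathcal S(A)$ at all, each $\soc(\prj(S))$ must already be \emph{simple}; so (2) packages two things at once, simplicity of every projective socle and bijectivity of the resulting self-map of the finite set $\mathcal S(A)$.

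For the implication (1)$\Rightarrow$(2): if $A$ is self-injective then $A_A$ is injective, hence each indecomposable summand $\prj(S)$ is an indecomposable injective module. Over an Artin algebra an indecomposable injective has simple socle and is the injective envelope of that socle, so $\soc(\prj(S))$ is simple and $\prj(S)\cong I(\soc(\prj(S)))$. The map $\nu\colon S\mapsto \soc(\prj(S))$ is then injective, since $\soc(\prj(S))=\soc(\prj(S'))$ forces $\prj(S)\cong I(\soc(\prj(S)))\cong \prj(S')$ and therefore $S\cong S'$ by comparing tops; as $\mathcal S(A)$ is finite, an injective self-map is a permutation.

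For the implication (2)$\Rightarrow$(1): since each $\soc(\prj(S))=\nu(S)$ is simple, the socle is essential and we obtain an embedding $\prj(S)\hookrightarrow I(\nu(S))$, whence $\dim_{\Bbbk}\prj(S)\le \dim_{\Bbbk} I(\nu(S))$. Now I would compare total dimensions. Because $A$ is basic, $A_A=\bigoplus_{S\in\mathcal S(A)}\prj(S)$, so $\sum_S \dim_{\Bbbk}\prj(S)=\dim_{\Bbbk}A$. On the injective side, the duality $D$ carries ${}_AA$, a direct sum of the indecomposable projective left modules, onto $\bigoplus_{T\in\mathcal S(A)} I(T)$ with each indecomposable injective appearing once, and $D$ preserves $\Bbbk$-dimension; hence $\sum_{T\in\mathcal S(A)}\dim_{\Bbbk}I(T)=\dim_{\Bbbk}A$ as well. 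Since $\nu$ is a bijection, $\sum_S \dim_{\Bbbk} I(\nu(S))=\sum_T \dim_{\Bbbk} I(T)=\sum_S \dim_{\Bbbk}\prj(S)$. Combined with the termwise inequalities $\dim_{\Bbbk}\prj(S)\le \dim_{\Bbbk} I(\nu(S))$, equality of the two sums forces $\dim_{\Bbbk}\prj(S)=\dim_{\Bbbk} I(\nu(S))$ for every $S$, so each embedding $\prj(S)\hookrightarrow I(\nu(S))$ is an isomorphism. Thus every $\prj(S)$ is injective and $A_A$ is injective, i.e. $A$ is self-injective.

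The routine verifications (that $\soc$ is essential, that an indecomposable injective is the envelope of its simple socle, and that $D$ interchanges projectives and injectives while preserving dimension) are all standard and I would cite them rather than reprove them. The step I expect to be the real crux is the dimension bookkeeping in (2)$\Rightarrow$(1): one must match the indecomposable injectives correctly with $\mathcal S(A)$ and exploit that $\nu$ is a \emph{bijection}, not merely an injection, which is precisely what makes the two total dimensions coincide and collapses the termwise inequalities into equalities. In other words, the surjectivity of $\nu$ is exactly the ingredient that upgrades the embeddings $\prj(S)\hookrightarrow I(\nu(S))$ to isomorphisms.
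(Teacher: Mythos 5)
Your proof is correct. Note, however, that the paper does not actually prove this statement: it is quoted as a classical theorem with a citation to Nakayama's 1939 paper \cite{N}, so there is no internal argument to compare yours against. What you have written is the standard self-contained derivation---(1)$\Rightarrow$(2) via the fact that an indecomposable injective over an Artin algebra is the injective envelope of its simple socle, and (2)$\Rightarrow$(1) via the embeddings $\prj(S)\hookrightarrow I(\nu(S))$ together with the dimension count $\sum_S \dim_{\Bbbk}\prj(S)=\dim_{\Bbbk}A=\sum_T \dim_{\Bbbk}I(T)$ coming from $D({}_AA)\cong\bigoplus_T I(T)$ for a basic algebra---and both halves are sound, including the essentiality argument that makes the extension $\prj(S)\to I(\nu(S))$ injective and the use of bijectivity of $\nu$ to collapse the termwise inequalities into equalities.
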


\subsection{Igusa-Todorov $\phi$ function} 
We recall the definition of the Igusa-Todorov $\phi$ function and some basic properties. We also define the $\phi$-dimension of an algebra.

\begin{defi}
Let $K_0(A)$ be the abelian group generated by all symbols $[M]$, where M is a f.g.
$A$-module, modulo the relations
\begin{enumerate}
  \item $[M]-[M']-[M'']$ if  $M \cong M' \oplus M''$,
  \item $[P]$ for each projective.
\end{enumerate}
\end{defi}

\noindent Let $\bar{\Omega}: K_0 (A) \rightarrow K_0 (A)$ be the group endomorphism induced by $\Omega$, and let $K_i (A) = \bar{\Omega}(K_{i-1}(A))= \ldots = \bar{\Omega}^{i}(K_{0} (A))$. Now, if $M$ is a finitely generated $A$-module then $\langle \add M\rangle$ denotes the subgroup of $K_0 (A)$ generated by the classes of indecomposable summands of $M$.
Given a subgroup $G$ of $K_0(A)$, we define $\eta_{\bar{\Omega}}(G)$ as the first time where $\rk(\bar{\Omega}^n(G))$ stabilizes.

\begin{defi}\label{monomorfismo}
\cite{IT}The \textbf{(right) Igusa-Todorov function} $\phi$ of $M\in \mod A$  is defined as 
\[\phi_{A}(M) = \min\left\{l: \bar{\Omega}_{A} {\vert}_{{\bar{\Omega}_{A}}^{l+s}\langle \add M\rangle}\text{ is a monomorphism for all }s\in \mathbb{N}\right\}.\]
\end{defi}

In case that there is no possible misinterpretation we denote by $\phi$ the Igusa-Todorov function $\phi_A$.

\begin{prop}[\cite{IT}, \cite{HLM}] \label{it1} \label{Huard1}
Given $M,N\in$ mod$A$. 

\begin{enumerate}
  \item $\phi(M) = \pd (M)$ if $\pd (M) < \infty$.
  \item $\phi(M) = 0$ if $M \in \ind A$ and $\pd(M) = \infty$.
  \item $\phi(M) \leq \phi(M \oplus N)$.
  \item $\phi\left(M^{k}\right) = \phi(M)$ for $k \in \mathbb{N}$.
  \item If $M \in \mod A$, then $\phi(M) \leq \phi(\Omega(M))+1$.
\end{enumerate}
\begin{proof}
For the statements 1-4 see \cite{IT}, and for 5 see \cite{HLM}.
\end{proof}
\end{prop}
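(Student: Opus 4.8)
The plan is to first recast the definition of $\phi$ into a purely rank‑theoretic statement, from which all five items become short arguments. The observation I would establish at the outset is that $\langle \add M\rangle$ is a finitely generated free abelian subgroup of $K_0(A)$ and that the ranks $\rk(\bar{\Omega}^n\langle\add M\rangle)$ form a non-increasing sequence of non-negative integers, since each term is a homomorphic image of its predecessor. The key lemma is that a surjective endomorphism between free abelian groups of equal finite rank is injective (its integral matrix has determinant $\pm 1$); because $\bar{\Omega}$ maps $\bar{\Omega}^l\langle\add M\rangle$ onto $\bar{\Omega}^{l+1}\langle\add M\rangle$ by construction, the restriction $\bar{\Omega}{\vert}_{\bar{\Omega}^l\langle\add M\rangle}$ is a monomorphism precisely when $\rk(\bar{\Omega}^{l+1}\langle\add M\rangle) = \rk(\bar{\Omega}^l\langle\add M\rangle)$. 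Consequently $\phi(M)$ coincides with $\eta_{\bar{\Omega}}(\langle\add M\rangle)$, the first index at which the rank sequence stabilizes, and I will use this reformulation throughout.

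For item (1), write $M=\bigoplus_i M_i$ with $M_i$ indecomposable and suppose $\pd(M)=d<\infty$. Then $\Omega^d M_i$ is projective for every $i$, so $\bar{\Omega}^d[M_i]=[\Omega^d M_i]=0$ and $\rk(\bar{\Omega}^d\langle\add M\rangle)=0$; choosing a summand $M_j$ with $\pd(M_j)=d$, the module $\Omega^{d-1}M_j$ is non-projective, so $[\Omega^{d-1}M_j]\neq 0$ and $\rk(\bar{\Omega}^{d-1}\langle\add M\rangle)\geq 1$. Since the rank sequence is non-increasing and drops from a positive value to $0$ exactly at step $d$, stabilization occurs at $d$, giving $\phi(M)=d=\pd(M)$. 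For item (2), if $M$ is indecomposable with $\pd(M)=\infty$ then $\langle\add M\rangle=\Z[M]$ has rank one, and every $\Omega^n M$ is non-projective, so $[\Omega^n M]\neq 0$ in the torsion-free group $K_0(A)$; hence each $\bar{\Omega}^n\langle\add M\rangle$ has rank one, the sequence is constant, and $\phi(M)=0$.

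Items (3) and (4) I would obtain immediately from the reformulation together with monotonicity under inclusion: if $G\subseteq H$ then $\bar{\Omega}^n G\subseteq\bar{\Omega}^n H$, and the restriction of a monomorphism to a subgroup is again a monomorphism, so $\eta_{\bar{\Omega}}(G)\leq\eta_{\bar{\Omega}}(H)$. Since the indecomposable summands of $M$ are among those of $M\oplus N$ we have $\langle\add M\rangle\subseteq\langle\add(M\oplus N)\rangle$, yielding (3); and $\langle\add M^k\rangle=\langle\add M\rangle$ because $M$ and $M^k$ have the same indecomposable summands, yielding (4).

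Item (5) is the one I expect to require the most care, and it rests on the containment $\bar{\Omega}\langle\add M\rangle\subseteq\langle\add\Omega M\rangle$: each generator $[\Omega M_i]$ of the left-hand group decomposes as a sum of classes of indecomposable summands of $\Omega M=\bigoplus_i\Omega M_i$, hence lies in the right-hand group. The two groups need not be equal — the syzygy of an indecomposable may split, so one only obtains an inclusion — and this is precisely the point that must be handled with attention. Granting the inclusion, set $l=\phi(\Omega M)$; then for every $s$,
\[
\bar{\Omega}^{l+1+s}\langle\add M\rangle=\bar{\Omega}^{l+s}\bigl(\bar{\Omega}\langle\add M\rangle\bigr)\subseteq\bar{\Omega}^{l+s}\langle\add\Omega M\rangle,
\]
and since $\bar{\Omega}$ restricted to $\bar{\Omega}^{l+s}\langle\add\Omega M\rangle$ is a monomorphism, so is its restriction to the subgroup $\bar{\Omega}^{l+1+s}\langle\add M\rangle$. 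Therefore $\phi(M)\leq l+1=\phi(\Omega M)+1$, as claimed.
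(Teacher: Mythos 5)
Your proof is correct, but note that the paper does not actually prove this proposition: it simply cites \cite{IT} for items (1)--(4) and \cite{HLM} for item (5). What you have done is reconstruct, from first principles, essentially the original arguments of those references, so relative to this paper your route is genuinely different (self-contained rather than deferred). The real content of your write-up is the explicit bridge between the paper's two points of view on $\phi$: the definition via monomorphisms of $\bar{\Omega}$ restricted to $\bar{\Omega}^{l+s}\langle \add M\rangle$, and the rank-stabilization quantity $\eta_{\bar{\Omega}}$ that the paper introduces but never formally identifies with $\phi$. Your key lemma (a surjection between free abelian groups of equal finite rank is injective, since rank is additive on short exact sequences and subgroups of free abelian groups are torsion-free) is exactly what legitimizes that identification; once it is in place, items (1)--(4) reduce to short rank computations and monotonicity of $\eta_{\bar{\Omega}}$ under inclusion of subgroups, and item (5) follows from the inclusion $\bar{\Omega}\langle \add M\rangle \subseteq \langle \add \Omega(M)\rangle$, which you correctly flag as the delicate point: it is only an inclusion, not an equality, because the syzygy of an indecomposable can decompose, and your argument uses nothing more than that restrictions of monomorphisms are monomorphisms. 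Two trivial boundary remarks, neither of which affects correctness: in item (1) your ``drops from a positive value to $0$ exactly at step $d$'' argument presupposes $d \geq 1$ (the case $d = 0$, i.e.\ $M$ projective, is immediate since then $\langle \add M\rangle = 0$), and in item (4) one should read $k \geq 1$, since $M^{0} = 0$ has $\phi(M^{0}) = 0$.
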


\begin{defi}
\cite{IT} The \textbf{(right) Igusa--Todorov function
$\psi_{A}$} of $M\in \mod A$  is defined as 
\begin{equation*}
\psi_{A}(M) = \phi_{A}(M) + \sup\{ \pd(N) : \Omega^{\phi(M)}(M) = N
\oplus N'\mbox{ and } \pd(N)< \infty\}.
\end{equation*}
\end{defi}

In case that there is no possible misinterpretation we denote by $\psi$ the Igusa-Todorov function $\psi_A$.

\begin{prop}
[\cite{IT}, \cite{HLM}]
\label{Huard2}
Given $M,N\in$ mod$A$.
\begin{enumerate}
\item[1.] $\psi(M) = \pd(M)$ if $\pd (M) < \infty$.
\item[2.] $\psi(M) = 0$ if $M \in \ind A$ and $\pd(M) = \infty$.
\item[3.] $\psi(M) \leq\psi(M \oplus N)$.
\item[4.] $\psi(M^{k}) = \psi(M)$ for $k \in\mathbb{N}$.
\item[5.] If $N$ is a direct summand of $\Omega^{n}(M)$ where $n\leq
\phi(M)$ and
$\pd(N) < \infty$, then $\pd(N) + n \leq\psi(M)$.
\item[6.] If $M \in\mathrm{mod}A$, then $\psi(M) \leq\psi(\Omega(M))+1$.
\end{enumerate}
\end{prop}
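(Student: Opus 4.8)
The plan is to treat item (5) as the structural core and to derive items (3) and (6) from it, while items (1), (2) and (4) follow directly from the definition of $\psi$ together with the corresponding facts about $\phi$ in Proposition \ref{it1}. Throughout I will use that $\psi(M)\geq \phi(M)$, which is immediate since the supremum in the definition is nonnegative (the zero summand contributes $0$, and if $\Omega^{\phi(M)}(M)$ has no summand of finite projective dimension the empty supremum is read as $0$).

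First I would dispatch (1), (2) and (4). For (1), if $\pd(M)<\infty$ then $\phi(M)=\pd(M)$ by Proposition \ref{it1}(1), and $\Omega^{\phi(M)}(M)=\Omega^{\pd(M)}(M)$ is projective; hence every direct summand has projective dimension $0$, the supremum is $0$, and $\psi(M)=\pd(M)$. For (2), if $M\in\ind A$ with $\pd(M)=\infty$ then $\phi(M)=0$ by Proposition \ref{it1}(2), so $\Omega^{0}(M)=M$ has no direct summand of finite projective dimension other than $0$, forcing $\psi(M)=0$. For (4), the indecomposable summands of $M^{k}$ and of $M$ coincide, so $\langle \add M^{k}\rangle=\langle \add M\rangle$ and $\phi(M^{k})=\phi(M)=:d$; since $\Omega^{d}(M^{k})=(\Omega^{d}(M))^{k}$ has the same indecomposable summands as $\Omega^{d}(M)$, the two supremum terms agree and $\psi(M^{k})=\psi(M)$.

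The heart of the argument is (5). Given $N$ a direct summand of $\Omega^{n}(M)$ with $n\leq \phi(M)$ and $\pd(N)=p<\infty$, I would apply the syzygy functor $\phi(M)-n$ times. Since syzygies of minimal resolutions compose, $\Omega^{\phi(M)-n}(N)$ is a direct summand of $\Omega^{\phi(M)-n}(\Omega^{n}(M))=\Omega^{\phi(M)}(M)$. If $\phi(M)-n\leq p$, then $\Omega^{\phi(M)-n}(N)$ is a nonzero summand of $\Omega^{\phi(M)}(M)$ of finite projective dimension $p-(\phi(M)-n)$, so the supremum term is at least $p-\phi(M)+n$ and therefore $\psi(M)\geq \phi(M)+(p-\phi(M)+n)=\pd(N)+n$. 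If instead $\phi(M)-n> p$, then $n+p<\phi(M)$ and the inequality is immediate from $\psi(M)\geq \phi(M)>n+p$. The main obstacle, and the step demanding the most care, is exactly this case split together with the convention on the empty or zero supremum; everything hinges on correctly tracking that applying $\Omega$ drops projective dimension by one until the summand becomes projective.

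Finally I would derive (3) and (6) from (5). For (3), choose a direct summand $L$ of $\Omega^{\phi(M)}(M)$ of finite projective dimension realizing the supremum, so $\psi(M)=\phi(M)+\pd(L)$; since $\Omega^{\phi(M)}(M)$ is a summand of $\Omega^{\phi(M)}(M\oplus N)$ and $\phi(M)\leq \phi(M\oplus N)$ by Proposition \ref{it1}(3), applying (5) to $M\oplus N$ with the summand $L$ and $n=\phi(M)$ gives $\pd(L)+\phi(M)\leq \psi(M\oplus N)$, that is $\psi(M)\leq \psi(M\oplus N)$. For (6), write $d=\phi(M)$ and pick $L\mid \Omega^{d}(M)$ realizing the supremum. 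If $d\geq 1$ then $\Omega^{d}(M)=\Omega^{d-1}(\Omega(M))$, and since $d-1\leq \phi(\Omega(M))$ by Proposition \ref{it1}(5), applying (5) to $\Omega(M)$ yields $\pd(L)+(d-1)\leq \psi(\Omega(M))$, whence $\psi(M)=\pd(L)+d\leq \psi(\Omega(M))+1$; the case $d=0$ is handled the same way after replacing $L$ by its syzygy $\Omega(L)\mid \Omega(M)$.
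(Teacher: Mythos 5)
Your proof is correct. Note that the paper itself does not prove this proposition at all: its ``proof'' is a pointer to the literature (items 1--5 to \cite{IT}, item 6 to \cite{HLM}), so any honest argument is necessarily a different route. What you supply is a self-contained derivation, and its architecture is good: (1), (2), (4) fall out of the definition of $\psi$ together with Proposition \ref{it1}, item (5) is proved directly by tracking how minimal syzygies drop projective dimension (your case split on $\phi(M)-n\lessgtr p$ is exactly the point where care is needed, and you handle it correctly, using $\psi(M)\geq\phi(M)$ in the degenerate case), and then (3) and (6) are deduced formally from (5) -- for (3) by viewing a summand $L$ realizing the supremum for $M$ as a summand of $\Omega^{\phi(M)}(M\oplus N)$ with $\phi(M)\leq\phi(M\oplus N)$, and for (6) by shifting $L$ into $\Omega^{d-1}(\Omega(M))$ using $\phi(M)\leq\phi(\Omega(M))+1$. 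This is essentially how these facts are established in the cited sources (Igusa--Todorov for 1--5, Huard--Lanzilotta--Mendoza for 6), so you are not inventing a new mechanism, but your observation that (3) and (6) are purely formal consequences of (5) is a clean packaging that the paper's citation hides. Two small points worth making explicit if this were to be inserted: the supremum in the definition of $\psi$ is attained because $\Omega^{\phi(M)}(M)$ has only finitely many indecomposable summands (Krull--Schmidt), which justifies ``choose $L$ realizing the supremum''; and in the case $d=0$ of (6), the subcase where the optimal $L$ is projective should be noted as trivial, since then $\psi(M)=\phi(M)=0$.
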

\begin{proof}
For the statements 1--5 see \cite{IT}, and for 6 see
\cite{HLM}.
\end{proof}

\begin{defi}
Let $A$ be an Artin algebra and $\mathcal{C}$ a full subcategory of
$\mathrm{mod}A$. We define:
\begin{itemize}
\item
$\fidim(\mathcal{C}) = \sup\{\phi(M) $ such that $ M \in Ob
\mathcal{C} \}$,
\item
$\psidim(\mathcal{C}) = \sup\{\psi(M) $ such that $ M \in Ob
\mathcal{C} \}$.
\end{itemize}

In particular, we denote by

\begin{itemize}
\item
$\fidim(A) = \fidim(\mod A)$,
\item
$\psidim(A) = \psidim(\mod A)$.
\end{itemize}

\end{defi}
\begin{teo}[\cite{HL}] \label{Phi = 0}

If $A$ is an Artin algebra then $A$ is self-injective if and only if $\fidim (A) = 0$.

\end{teo}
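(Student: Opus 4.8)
The plan is to reduce the whole statement to a single assertion about the induced syzygy map $\bar{\Omega}$ on $K_0(A)$. First I would record the reformulation
$\fidim(A)=0 \iff \phi(M)=0$ for every $M\in\mod A \iff \bar{\Omega}$ is a monomorphism on all of $K_0(A)$. The first equivalence is immediate, since $\phi\ge 0$ and $\fidim(A)$ is by definition the supremum of the $\phi(M)$. For the second, note that $\phi(M)=0$ means precisely that $\bar{\Omega}$ is injective on each $\bar{\Omega}^{s}\langle\add M\rangle$; taking $s=0$ and letting $M$ range over all modules (so that $\langle\add M\rangle$ ranges over all finitely generated subgroups generated by indecomposable classes) shows that $\phi(M)=0$ holds for every $M$ exactly when $\bar{\Omega}$ is injective on every such subgroup, i.e.\ on $K_0(A)$, because injectivity of a group homomorphism is detected on finitely generated subgroups.

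For the implication ``self-injective $\Rightarrow \fidim(A)=0$'' I would invoke the classical fact that over a self-injective algebra the syzygy operator $\Omega$ is a self-equivalence of the stable category $\underline{\mod}A$, and hence permutes the isoclasses of indecomposable non-projective modules. Since these classes form a $\Z$-basis of $K_0(A)$, the map $\bar{\Omega}$ permutes a basis and is therefore an automorphism of $K_0(A)$; in particular it is a monomorphism, so $\phi\equiv 0$ and $\fidim(A)=0$.

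The converse is the substantial direction, and I would prove its contrapositive: if $A$ is not self-injective then $\fidim(A)>0$. There is an easy sub-case: if $\fdim(A)>0$ there is a module $N$ with $0<\pd(N)=d<\infty$, whence $\pd(\Omega^{d-1}N)=1$ and $\phi(\Omega^{d-1}N)=1$ by Proposition \ref{it1}(1), so $\fidim(A)\ge 1$. The remaining, and genuinely delicate, sub-case is when $A$ is not self-injective yet $\fdim(A)=0$ (such algebras exist, e.g.\ $\Bbbk[x,y]/(x,y)^2$). Here I would use the criterion of Theorem \ref{Nakayama}: since $A$ is not self-injective, the assignment $S\mapsto\soc(\prj(S))$ fails to be a permutation of $\mathcal{S}(A)$, so some indecomposable projective $\prj(S_0)$ has non-simple socle. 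The idea is then to manufacture a failure of injectivity of $\bar{\Omega}$ directly: from such a projective one produces pairwise non-isomorphic quotients $M_i=\prj(S_0)/U_i$ by simple submodules $U_i\subseteq\soc\prj(S_0)$, each satisfying $\Omega M_i\cong U_i$ (since $U_i\subseteq\operatorname{rad}\prj(S_0)$, the cover of $M_i$ is still $\prj(S_0)$). Then whenever two of the $U_i$ are isomorphic simple modules one gets $\bar{\Omega}([M_1]-[M_2])=0$ with $[M_1]\neq[M_2]$, forcing $\phi(M_1\oplus M_2)\ge 1$.

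The main obstacle is exactly this last construction. One must guarantee both that the chosen simple submodules of $\soc\prj(S_0)$ give \emph{non-isomorphic} quotients (which requires controlling $\mathrm{Aut}_A(\prj(S_0))$ and its action on the socle), and that the various ways in which the socle map can fail to be a permutation --- non-simple socle containing a repeated simple type, multiplicity-free non-simple socle, or simple socles that fail to match up bijectively --- can all be funnelled into such a $K_0$-level collision of syzygies. I expect the cleanest route is to treat these uniformly by a rank argument, showing that non-self-injectivity forces $\rk\bar{\Omega}(\langle\add M\rangle)<\rk\langle\add M\rangle$ for a suitable $M$ assembled from the projectives and their socle quotients; this is precisely the failure of the $s=0$ monomorphism condition and hence yields $\phi(M)\ge 1$.
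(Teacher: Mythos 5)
The paper itself gives no proof of this statement---it is imported from \cite{HL}---so the fair comparison is with the proof of Theorem \ref{teo.1.n-1}, where the paper carries out precisely the construction you are attempting (non-self-injectivity plus Theorem \ref{Nakayama} yields a module with $\phi=1$). Your reduction to injectivity of $\bar{\Omega}$ on $K_0(A)$, your proof that self-injectivity makes $\bar{\Omega}$ permute the basis of classes of non-projective indecomposables, and the sub-case $\fdim(A)>0$ are all correct. The genuine gap is the one you flag yourself: when $\soc(\prj(S_0))$ contains two isomorphic simple submodules $U_1\cong U_2$, you need the quotients $M_i=\prj(S_0)/U_i$ to be non-isomorphic, and this can fail outright, not merely be hard to verify. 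For $A=\Bbbk\langle x,y\rangle/(x^2,xy,y^2)$ the socle of $A_A$ is $\Bbbk x\oplus \Bbbk yx$, and left multiplication by the unit $1+y$ is an automorphism of $A_A$ carrying the simple submodule $\Bbbk x$ onto $\Bbbk(x+yx)$; hence $A/xA\cong A/(x+yx)A$, and the element $[M_1]-[M_2]$ you want in $\ker \bar{\Omega}$ is simply zero. (There is also a small non sequitur at the start of this sub-case: failure of $\nu$ to be a permutation does not force a non-simple socle, since $\nu$ may be a well-defined but non-injective function; that case, which you do list later, is in fact the harmless one, because quotients of non-isomorphic indecomposable projectives have non-isomorphic projective covers and so cannot be isomorphic.)

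The missing idea is to replace ``two copies of the same kind of quotient'' by ``quotients of visibly different length'', and to allow integer coefficients in $K_0(A)$ rather than only a difference of two basis elements. If $S'\oplus S'\subseteq \soc(\prj(S))$, take $M_1=\prj(S)/S'$ and $M_2=\prj(S)/S'^2$: both have simple top, hence are indecomposable; they are non-isomorphic because their lengths differ; both are non-projective because their syzygies $S'$ and $S'^2$ are non-zero; and $\bar{\Omega}(2[M_1]-[M_2])=2[S']-2[S']=0$ while $2[M_1]-[M_2]\neq 0$, so $\phi(M_1\oplus M_2)\geq 1$ and $\fidim(A)>0$. If instead $S_1\oplus S_2\subseteq\soc(\prj(S))$ with $S_1\not\cong S_2$, the three quotients $M_1=\prj(S)/S_1$, $M_2=\prj(S)/S_2$, $M_3=\prj(S)/(S_1\oplus S_2)$ give the kernel element $[M_1]+[M_2]-[M_3]$, which is non-zero whether or not $M_1\cong M_2$. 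This removes any need to control $\mathrm{Aut}_A(\prj(S_0))$, which is where your ``rank argument'' hope was stuck; it is exactly the device used in the paper's proof of Theorem \ref{teo.1.n-1}, and with it your outline closes up into a complete proof.
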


\subsection{Radical square zero algebras}
Given a quiver Q we denote by  $J$  the ideal generated by the arrows in $\Bbbk Q$.
By a radical square zero algebra we mean an algebra which is isomorphic to an algebra of the type $A=\frac{\Bbbk Q}{J^2}$.
If $\mathcal{S}({A}) = \left\{S_1, \ldots, S_n\right\}$ denotes a complete set of simple $A$-modules up to isomorphism, then we divide the set $\mathcal{S}(A)$ in the three distinct sets

\begin{itemize}
  \item $\mathcal{S}_I$ the set of injective modules in  $\mathcal{S}$,
  \item $\mathcal{S}_P$ the set of projective modules in $\mathcal{S}$,
  \item $\mathcal{S}_D = \mathcal{S}\setminus (\mathcal{S}_I \cup \mathcal{S}_P)$.
\end{itemize}

\begin{obs} \label{fgd}
For radical square zero algebras it holds that  
$\displaystyle \Omega (S_i) = \bigoplus_{\alpha:i\rightarrow j} S_j$, i.e.: $\Omega (S_i)$ is a direct sum  of simple modules, and the number of summands isomorphic to $S_j$ coincides with the number of arrows starting in the vertex $i$ and ending at the vertex $j$. 
Given a radical square zero algebra $A$ with $n$ vertices and finite global dimension, is easy to compute its global dimension using the last fact. Explictly we have
\[ \gd(A) = \sup\left\{\length(\rho): \start(\rho)\text{ is a source, and }\target(\rho)\text{ is a sink}\right\},\]
conluding that $\gd(A)$ must be less or equal to $n-1$.
\end{obs}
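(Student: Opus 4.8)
The plan is to reduce everything to the projective resolutions of the simple modules, which are completely controlled by the combinatorics of paths in $Q$ once $J^2=0$. First I would establish the syzygy formula $\Omega(S_i)=\bigoplus_{\alpha:i\to j}S_j$: since $J^2=0$, the radical of $\prj(S_i)$ is annihilated by $J$ and is therefore semisimple, and its composition factors are exactly the $S_j$ indexed (with multiplicity) by the arrows $\alpha:i\to j$ out of $i$; as $\Omega(S_i)$ is by definition the radical of its projective cover $\prj(S_i)$, the formula follows. In particular $S_i$ is projective precisely when $i$ is a sink.

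Next I would iterate this in a minimal projective resolution. An induction on $n$, using that the projective cover of $\bigoplus_{i\to j}S_j$ is $\bigoplus_{i\to j}\prj(S_j)$ and that $\Omega(S_j)=0$ exactly when $j$ is a sink, yields
\[
\Omega^{n}(S_i)=\bigoplus_{\length(\rho)=n,\ \start(\rho)=i}S_{\target(\rho)},
\]
the sum running over all paths $\rho$ of length $n$ starting at $i$ (the sink summands drop out automatically, since a path of length $n$ has no sink among its intermediate vertices). Hence $\Omega^{n}(S_i)\neq 0$ if and only if there is a path of length $n$ starting at $i$, and consequently $\pd(S_i)$ equals the length of the longest path starting at $i$ (with the convention that this is $0$ when $i$ is a sink, and $\infty$ when arbitrarily long paths exist).

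Finally I would pass from simples to the whole algebra. Using the standard fact that $\gd(A)=\sup\{\pd(S):S\in\mathcal{S}(A)\}$, the previous step expresses $\gd(A)$ as the length of the longest path in $Q$. When $\gd(A)<\infty$ the quiver $Q$ must be acyclic---an oriented cycle through $i$ would force $\Omega^{n}(S_i)\neq 0$ for all $n$---so every path visits distinct vertices and a maximal path necessarily begins at a source and ends at a sink; this yields the displayed formula $\gd(A)=\sup\{\length(\rho):\start(\rho)\text{ a source},\ \target(\rho)\text{ a sink}\}$. Since an acyclic path through $n$ vertices has at most $n$ distinct vertices and hence length at most $n-1$, the bound $\gd(A)\le n-1$ follows. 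The only genuinely delicate points are the off-by-one bookkeeping in identifying $\pd(S_i)$ with the last nonzero syzygy and the observation that maximality of a path forces its endpoints to be a source and a sink; everything else is routine once the path description of $\Omega^{n}(S_i)$ is in hand.
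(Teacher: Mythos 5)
Your argument is correct and is exactly the route the paper intends: the remark is stated without a detailed proof, and its implicit justification is precisely your chain of steps — semisimplicity of $\mathrm{rad}(\prj(S_i))$ from $J^2=0$ gives the syzygy formula, iteration gives $\Omega^n(S_i)$ as a sum of simples indexed by paths of length $n$ starting at $i$, and then $\gd(A)=\sup\{\pd(S):S\in\mathcal{S}(A)\}$ together with acyclicity (forced by finite global dimension) identifies $\gd(A)$ with the length of a maximal path, which must run from a source to a sink and has length at most $n-1$. You have simply filled in the bookkeeping the paper dismisses as easy, so there is nothing to correct.
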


\begin{obs}

For a radical square zero algebra $A = \frac{\Bbbk Q}{J^2}$, $\Omega(M)$ is a semisimple $A$-module for every $A$-module $M$, and $K_1 (A)$ has basis $\left\{[S]_{S \in \mathcal{S}_D}\right\}$.
In particular if $Q$ has no sinks nor sources, then $K_1 (A)$ has basis $\{[S]_{S \in \mathcal{S}}\}$.

\end{obs}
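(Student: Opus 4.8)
The plan is to treat the three assertions in turn. For the semisimplicity of $\Omega(M)$, I would start from a projective cover $p\colon \prj(M)\to M$, so that $\Omega(M)=\ker p\subseteq \mathrm{rad}(\prj(M))$. Since $A=\Bbbk Q/J^2$ has radical square zero, the radical of every projective module is annihilated by $\mathrm{rad}(A)$, hence $\mathrm{rad}(\prj(M))$ is a semisimple module; as $\Omega(M)$ is a submodule of a semisimple module, it is itself semisimple. This is also consistent with the explicit description $\Omega(S_i)=\bigoplus_{\alpha\colon i\to j}S_j$ recalled in Remark \ref{fgd}.

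For the statement about $K_1(A)$, recall that $K_0(A)$ is free abelian on the classes of non-projective indecomposables, so the classes $\{[S]:S\in\mathcal{S}_D\}$ are automatically linearly independent and it suffices to prove that $K_1(A)=\bar{\Omega}(K_0(A))$ equals their span. First I would establish the inclusion $\subseteq$: for any $M$, the summands of $\Omega(M)\subseteq\mathrm{rad}(\prj(M))$ are simple modules $S_w$ with $w$ the target of some arrow, so $w$ is not a source and $S_w\notin\mathcal{S}_I$; moreover such a summand contributes a nonzero class only when it is non-projective, i.e. $S_w\notin\mathcal{S}_P$. Hence every $[\Omega(M)]$ is a $\mathbb{Z}$-combination of classes $[S]$ with $S\in\mathcal{S}_D$, and since these classes generate $K_1(A)$ we obtain $K_1(A)\subseteq\langle[S]:S\in\mathcal{S}_D\rangle$.

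The reverse inclusion is the main point. Fix $S_j\in\mathcal{S}_D$; since $j$ is not a source there is an arrow $\alpha\colon i\to j$, so $S_j$ occurs as a summand of the semisimple module $\mathrm{rad}(\prj(S_i))$ and is therefore a direct summand of it. I would then consider the module $M=\prj(S_i)/S_j$ obtained by quotienting out a single copy of $S_j$ in the radical: it has simple top $S_i$, so the quotient map is a projective cover with $\prj(M)=\prj(S_i)$, and by construction $\Omega(M)=\ker(\prj(S_i)\to M)=S_j$. Since $M$ is a proper quotient of $\prj(S_i)$ it is non-projective, so $[M]$ is a basis element of $K_0(A)$ with $\bar{\Omega}[M]=[S_j]$, giving $[S_j]\in K_1(A)$. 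Running over all such $j$ yields the reverse inclusion, and hence $\{[S]:S\in\mathcal{S}_D\}$ is a basis of $K_1(A)$.

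The last assertion is then immediate: if $Q$ has no sinks and no sources then no simple is projective and none is injective, so $\mathcal{S}_P=\mathcal{S}_I=\emptyset$ and $\mathcal{S}_D=\mathcal{S}$, whence $\{[S]:S\in\mathcal{S}\}$ is a basis of $K_1(A)$. I expect the only delicate step to be the reverse inclusion, namely checking that the quotient $M=\prj(S_i)/S_j$ really has $\Omega(M)=S_j$ and is indecomposable and non-projective; this is exactly the point where the hypothesis $J^2=0$ (so that $\mathrm{rad}(\prj(S_i))$ is semisimple and $S_j$ splits off) is used.
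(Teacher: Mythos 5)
Your proposal is correct, and in fact the paper states this as a remark with no proof at all, so there is nothing to compare it against: your argument supplies exactly the justification the paper leaves implicit. All three steps are sound — $\Omega(M)\subseteq\mathrm{rad}(\prj(M))$ is semisimple because $J^{2}=0$; the inclusion $K_1(A)\subseteq\langle [S]:S\in\mathcal{S}_D\rangle$ follows since syzygy summands are simples at non-source vertices and projective summands vanish in $K_0(A)$; and the reverse inclusion via $M=\prj(S_i)/S_j$ with $\Omega(M)\cong S_j$ (together with freeness of $K_0(A)$ on non-projective indecomposables for independence) is precisely the standard construction, consistent with the paper's description $\Omega(S_i)=\bigoplus_{\alpha:i\to j}S_j$.
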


\begin{defi}

Let $ A = \frac{\Bbbk Q}{J^2}$ be a radical square zero algebra, where $Q$ is finite with $n$ vertices and without sources nor sinks. We define $T:\mathbb{Q}^n \rightarrow \mathbb{Q}^n$ as the linear transformation given by $T(e_i) =  \sum_{j = 1}^n |\{\alpha:i\rightarrow j\}|e_j$.  

\end{defi}

\begin{obs}
Given a radical square zero algebra $A$ with $Q$ as the previous definition, the matrix of $T$ in the canonical basis and the matrix of $\bar{\Omega}|_{K_1(A)}$ in the canonical basis $\{[S]_{S \in \mathcal{S}}\}$ agree.
\end{obs}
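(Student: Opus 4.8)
The plan is to verify the equality of the two matrices column by column, by identifying the canonical basis $\{e_i\}$ of $\mathbb{Q}^n$ with the basis $\{[S_i]\}$ of $K_1(A)$ and computing $\bar{\Omega}$ on each basis class directly. First I would record that, because $Q$ has neither sources nor sinks, the preceding remark gives that $K_1(A)$ has basis $\{[S]_{S\in\mathcal{S}}\}$; concretely, no simple is projective (a simple $S_i$ is projective precisely when $i$ is a sink, which does not occur), so no generator of $K_0(A)$ is annihilated and the $n$ classes $[S_1],\dots,[S_n]$ form a free basis of $K_1(A)$. Fixing the labelling of the simples to match the labelling of the vertices, I identify $[S_i]\leftrightarrow e_i$, so that the two maps act on $\mathbb{Q}^n$ and $K_1(A)\otimes_{\mathbb{Z}}\mathbb{Q}$ respectively through the same vector space.

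Next I would check that $\bar{\Omega}$ does restrict to $K_1(A)$, which is what makes $\bar{\Omega}|_{K_1(A)}$ meaningful: by Remark \ref{fgd}, the syzygy of any module is semisimple, so $\bar{\Omega}([S_i])$ is an integral combination of classes of simples, all of which lie in $K_1(A)$. The core computation is then immediate from Remark \ref{fgd}: since $\Omega(S_i)=\bigoplus_{\alpha:i\to j}S_j$, we obtain in $K_0(A)$
\[
\bar{\Omega}([S_i]) \;=\; [\Omega(S_i)] \;=\; \sum_{\alpha:i\to j}[S_j] \;=\; \sum_{j=1}^{n}\bigl|\{\alpha:i\to j\}\bigr|\,[S_j].
\]
Comparing this with $T(e_i)=\sum_{j=1}^{n}|\{\alpha:i\to j\}|\,e_j$ under the identification $[S_i]\leftrightarrow e_i$ shows that the $i$-th columns of the two matrices coincide for every $i$; hence the matrices agree.

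I do not expect any genuine obstacle here, since the statement is essentially a restatement of Remark \ref{fgd} in matrix language. The only points that deserve an explicit line are the two bookkeeping facts used above: that no simple is projective (so that the multiplicity of $[S_j]$ in $\bar{\Omega}([S_i])$ is exactly the arrow count $|\{\alpha:i\to j\}|$, with nothing lost to the projective relation in $K_0(A)$), and that the orderings of simples and of vertices are chosen compatibly, so that ``$i$-th column'' means the same thing on both sides.
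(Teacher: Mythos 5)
Your proposal is correct and is essentially the argument the paper intends: the remark is stated without proof there precisely because it amounts to comparing $\bar{\Omega}([S_i])=\sum_{j}|\{\alpha:i\rightarrow j\}|\,[S_j]$, computed via Remark \ref{fgd}, with the definition $T(e_i)=\sum_{j}|\{\alpha:i\rightarrow j\}|\,e_j$ under the identification $[S_i]\leftrightarrow e_i$. Your bookkeeping points (no simple is projective since $Q$ has no sinks, so no coefficient is lost in $K_0(A)$, and $\bar{\Omega}$ preserves $K_1(A)$) are exactly the right details to make this identification rigorous.
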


The following result give a way to compute the $\phi$ function on radical square zero algebras.

\begin{prop}\label{tata}\cite{LMM}
If A is a radical square zero algebra not self-injective, then:
$$ \fidim(A) = \phi(\oplus_{S\in\mathcal{S}_{D}}S)+1.$$

\end{prop}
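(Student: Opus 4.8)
The plan is to reduce the statement to the rank–stabilization index $\eta_{\bar{\Omega}}$ and the single observation, immediate from Definition \ref{monomorfismo}, that $\phi(M) = \eta_{\bar{\Omega}}(\langle \add M\rangle)$ for every $M$; indeed $\bar{\Omega}$ restricted to a subgroup is a monomorphism precisely when it preserves that subgroup's rank, so the minimal $l$ in the definition is exactly the first index from which the ranks of $\bar{\Omega}^{n}\langle \add M\rangle$ are constant. The main structural input is the Remark that $K_1(A) = \bar{\Omega}(K_0(A))$ is free with basis $\{[S] : S \in \mathcal{S}_D\}$. Since the indecomposable summands of $D := \bigoplus_{S \in \mathcal{S}_D} S$ are exactly these (non-projective) simples, this says literally that $\langle \add D\rangle = K_1(A)$ as subgroups of $K_0(A)$. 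Because $\bar{\Omega}^{l}(K_1) = K_{l+1}$ for all $l$, the ranks of $K_1, K_2, \dots$ stabilize one step later than those of $K_0, K_1, \dots$, so $\phi(D) = \eta_{\bar{\Omega}}(K_1) = \eta_{\bar{\Omega}}(K_0) - 1$ as soon as $\eta_{\bar{\Omega}}(K_0) \geq 1$. Thus the proposition amounts to the identity $\fidim(A) = \eta_{\bar{\Omega}}(K_0)$, the hypothesis entering only to guarantee $\eta_{\bar{\Omega}}(K_0) \geq 1$ (via Theorem \ref{Phi = 0}, a non-self-injective $A$ has $\fidim(A) \geq 1$).

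For the inequality $\fidim(A) \leq \eta_{\bar{\Omega}}(K_0)$ I would prove the more general fact that $\eta_{\bar{\Omega}}(G) \leq \eta_{\bar{\Omega}}(K_0)$ for every subgroup $G \subseteq K_0(A)$. Once $n \geq \eta_{\bar{\Omega}}(K_0)$ the map $\bar{\Omega}$ is a monomorphism on $K_n = \bar{\Omega}^{n}(K_0)$, hence also on the subgroup $\bar{\Omega}^{n}(G) \subseteq K_n$, so the ranks of $\bar{\Omega}^{n}(G)$ are already constant from that index on. Taking $G = \langle \add M\rangle$ gives $\phi(M) \leq \eta_{\bar{\Omega}}(K_0)$ for every $M$, whence $\fidim(A) \leq \eta_{\bar{\Omega}}(K_0) = \phi(D)+1$.

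For the reverse inequality I would exhibit a single module attaining the bound. Write $N = \eta_{\bar{\Omega}}(K_0)$ (the case $N = 0$ forces $\fidim(A)=0$ and is trivial, so assume $N \geq 1$) and set $d_n = \rk(K_n)$, so that $d_{N-1} > d_N = d_{N+1} = \cdots$. As $K_{N-1} = \bar{\Omega}^{N-1}(K_0)$ has finite rank $d_{N-1}$ and is generated by the classes $\bar{\Omega}^{N-1}[X]$ over indecomposables $X$, finitely many of them, say $X_1, \dots, X_k$, already have $\{\bar{\Omega}^{N-1}[X_i]\}$ of full rank $d_{N-1}$. Put $M = X_1 \oplus \cdots \oplus X_k$ and $G = \langle \add M\rangle$. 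Then $\bar{\Omega}^{n}(G) \subseteq K_n$ gives $\rk(\bar{\Omega}^{n}(G)) \leq d_n$, while the choice of the $X_i$ gives $\rk(\bar{\Omega}^{N-1}(G)) = d_{N-1}$; since $\bar{\Omega}$ is a monomorphism on $K_N, K_{N+1}, \dots$ the ranks of $\bar{\Omega}^{n}(G)$ are constant for $n \geq N$ and lie strictly below $d_{N-1}$. Hence the first stabilization is exactly at $N$, so $\phi(M) = \eta_{\bar{\Omega}}(G) = N = \phi(D)+1$ and $\fidim(A) \geq \phi(D)+1$, completing the proof.

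The delicate point is this last step. The subgroup $\langle \add M\rangle$ is generated by the finitely many indecomposable summands of one module, whereas $K_0(A)$ may require infinitely many generators, since radical square zero algebras can be of wild representation type; moreover the simple modules alone do not suffice, because $\bar{\Omega}$ applied to the span of the simples can have image strictly smaller than $K_1(A)$ (its image is the lattice spanned by the rows of $T$, which need not be all of $\mathbb{Z}^{\mathcal{S}_D}$). The hard part is therefore to argue that finitely many, possibly non-simple, indecomposables already realize the full rank $d_{N-1}$ at level $N-1$, and to verify that the resulting subgroup inherits the strict rank drop of $K_0(A)$ at step $N$ rather than stabilizing earlier.
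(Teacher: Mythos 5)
The paper itself gives no proof of Proposition \ref{tata} (it is quoted from \cite{LMM}), so there is nothing internal to compare against; I assess your argument on its own merits. Its skeleton is correct: $\phi(M)=\eta_{\bar{\Omega}}(\langle\add M\rangle)$ holds because a surjection between free abelian groups of finite rank is injective exactly when it preserves rank; the Remark in Section 2.2 gives $\langle\add(\oplus_{S\in\mathcal{S}_D}S)\rangle=K_1(A)$ literally as subgroups of $K_0(A)$; and since $\bar{\Omega}^{l}(K_1)=K_{l+1}$, the statement does reduce to $\fidim(A)=\eta_{\bar{\Omega}}(K_0)$ (your ``one step later'' should read ``one step earlier'', but the formula $\phi(\oplus_{S\in\mathcal{S}_D}S)=\eta_{\bar{\Omega}}(K_0)-1$ is right). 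Your upper bound is also sound: for radical square zero algebras $K_n\subseteq K_1$ has finite rank for $n\geq1$, so rank stability plus surjectivity makes $\bar{\Omega}$ injective on $K_n$ for all $n\geq\eta_{\bar{\Omega}}(K_0)$, hence on every subgroup $\bar{\Omega}^{n}\langle\add M\rangle$, giving $\phi(M)\leq\eta_{\bar{\Omega}}(K_0)$ for all $M$.

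The gap is in the lower bound, at exactly the spot your closing paragraph gestures at without resolving. You assert that $K_{N-1}$ has finite rank, where $N=\eta_{\bar{\Omega}}(K_0)$. This is true for $N\geq2$, since then $K_{N-1}\subseteq K_1$, and there your construction is complete and correct; but it fails for $N=1$ when $A$ is representation-infinite, because then $K_{N-1}=K_0$ has infinite rank. Concretely, for the quiver with two arrows $1\to2$ and a loop at $2$, one has, in your notation, $d_0=\infty$ and $d_1=d_2=\cdots=1$, so no finite set of indecomposables can realize ``full rank $d_{N-1}$'': the task you describe as ``the hard part'' is not hard but impossible, and as written the proof does not close. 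The repair is cheap and should be made explicit. Either dispatch $N=1$ separately: the hypothesis that $A$ is not self-injective gives $\fidim(A)\geq1=\eta_{\bar{\Omega}}(K_0)$ directly by Theorem \ref{Phi = 0}. Or argue uniformly in $N$ by not insisting on full rank: since $d_{N-1}>d_N$, choose $d_N+1$ linearly independent elements of $K_{N-1}$, write each as a finite integer combination of classes $\bar{\Omega}^{N-1}[X]$ with $X$ indecomposable, and let $M$ be the direct sum of the finitely many $X$ that occur; then $\rk\,\bar{\Omega}^{N-1}\langle\add M\rangle\geq d_N+1>d_N\geq\rk\,\bar{\Omega}^{N}\langle\add M\rangle$, so $\phi(M)\geq N$, and the upper bound forces $\phi(M)=N$. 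With either patch your argument becomes a complete proof.
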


Also we know that, for radical square zero algebras, there is bound for the $\phi$-dimension. 

\begin{prop}\label{toto}\cite{LMM}
If $A$ is a radical square zero algebra then  $\fidim(A)\leq n$ where $|Q_0| = n$.
\end{prop}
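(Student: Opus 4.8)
The plan is to reduce the statement to a bound on the nilpotency index of the single operator $B := \bar{\Omega}|_{K_1(A)}$ and then to control that index by the combinatorics of $Q$. First I would record the structure available for a radical square zero algebra. Since $\Omega(M)$ is semisimple for every $M$ (Remark \ref{fgd}), the subgroup $K_1(A)=\bar{\Omega}(K_0(A))$ is free on the basis $\{[S] : S\in\mathcal{S}_D\}$, and it is $\bar{\Omega}$-invariant: an arrow $i\to j$ out of a non-injective vertex reaches a vertex $j$ with an incoming arrow, hence $j$ is again non-injective, so $\bar{\Omega}([S_i])\in\langle [S]:S\in\mathcal{S}_D\rangle$. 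Consequently, for any $M\in\mod A$ and any $l\geq 1$ one has $\bar{\Omega}^l\langle\add M\rangle\subseteq K_1(A)$, and writing $W=\bar{\Omega}\langle\add M\rangle\otimes\mathbb{Q}$ inside $V:=K_1(A)\otimes\mathbb{Q}$ (so $\dim V=|\mathcal{S}_D|\leq n$) we obtain $\rk\bar{\Omega}^l\langle\add M\rangle=\dim_{\mathbb{Q}}B^{l-1}(W)$.

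Next I would translate $\phi$ into rank stabilization and run a Fitting-decomposition argument. On a finitely generated subgroup $H$ of the free abelian group $K_0(A)$, the restriction $\bar{\Omega}|_H$ is a monomorphism precisely when $\rk\bar{\Omega}(H)=\rk H$; hence $\phi(M)$ is the first index from which the non-increasing sequence $r_l=\rk\bar{\Omega}^l\langle\add M\rangle$ is constant. Decomposing $V=V_0\oplus V_1$ with $B|_{V_0}$ nilpotent of index $m$ and $B|_{V_1}$ invertible, one has $B^k(W)\subseteq\I(B^k)=V_1$ for all $k\geq m$, and $B$ is injective there, so $\dim_{\mathbb{Q}}B^k(W)$ is constant for $k\geq m$; equivalently $r_l$ is constant for $l\geq m+1$. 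This gives the uniform bound $\phi(M)\leq m+1$ for every $M$, whence $\fidim(A)\leq m+1$. (Alternatively, for non-self-injective $A$ one may invoke Proposition \ref{tata}: since $\langle\add\bigoplus_{S\in\mathcal{S}_D}S\rangle=K_1(A)$, the sequence $r_l=\rk B^l$ stabilizes exactly at $m$, so $\fidim(A)=\phi(\bigoplus_{S\in\mathcal{S}_D}S)+1=m+1$.)

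The heart of the argument, and the step I expect to be the real obstacle, is the bound $m\leq n-1$ on the nilpotency index. I would split into two cases. If $Q$ has a sink or a source, then some simple is projective or injective, so $\mathcal{S}_D\subsetneq\mathcal{S}$ and $\dim V=|\mathcal{S}_D|\leq n-1$, giving $m\leq\dim V\leq n-1$. If $Q$ has neither sinks nor sources, then $\mathcal{S}_D=\mathcal{S}$, $\dim V=n$, and $B$ is exactly the adjacency matrix $T$ of $Q$; but a finite quiver with no sources and no sinks must contain an oriented cycle (a finite acyclic digraph always possesses both a source and a sink), so for $c$ the length of such a cycle the matrix $T^{c}$ has a positive diagonal entry and $T$ fails to be nilpotent. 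Hence $V_1\neq 0$, the nilpotent part $V_0$ has dimension $\leq n-1$, and again $m\leq n-1$. Combining with the previous paragraph yields $\fidim(A)\leq(n-1)+1=n$; the self-injective case needs no separate treatment, being subsumed by the uniform bound $\phi(M)\leq n$ (or immediately by $\fidim(A)=0$ from Theorem \ref{Phi = 0}).
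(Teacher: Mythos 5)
Your argument is correct, and there is nothing in the paper to compare it against line by line: Proposition \ref{toto} is stated with a citation to \cite{LMM} and no proof is given, so what you have produced is a self-contained reconstruction of an imported result. Your route identifies $\phi(M)$ with the stabilization index of the non-increasing rank sequence $r_l=\rk\ \bar{\Omega}^l\langle \add M\rangle$, pushes everything into $V=K_1(A)\otimes\mathbb{Q}$, and applies the Fitting decomposition $V=V_0\oplus V_1$ of $B=\bar{\Omega}|_{K_1(A)}$ to obtain the uniform bound $\phi(M)\leq m+1$ for every module $M$, where $m$ is the nilpotency index of $B|_{V_0}$; the case split (a sink or a source forces $\dim V=|\mathcal{S}_D|\leq n-1$; no sinks and no sources forces an oriented cycle, hence $B=T$ is not nilpotent and $\dim V_0\leq n-1$) then gives $m\leq n-1$ in all cases, hence $\fidim(A)\leq n$. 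This is image-based and dual to the route suggested by the material this paper actually quotes from \cite{LMM}, which works with the ascending chain of kernels $\ker T\subseteq \ker T^2\subseteq\cdots$ (see the computations in Example \ref{ejemplo_gap}) together with the identity $\fidim(A)=\phi(\oplus_{S\in\mathcal{S}_D}S)+1$ of Proposition \ref{tata}. Your version buys two things: it never needs to set aside the self-injective case (which any argument through Proposition \ref{tata} must exclude), and it bounds $\phi(M)$ for all $M$ simultaneously rather than only for the sum of the simples. One step deserves an explicit extra line: the inference ``$T^c$ has a positive diagonal entry, hence $T$ is not nilpotent'' is valid only because $T$ has non-negative integer entries, so $(T^{jc})_{ii}\geq \left((T^c)_{ii}\right)^{j}>0$ for all $j$ (equivalently, every power of a nilpotent matrix has trace zero); the mere non-vanishing of $T^c$ would not suffice.
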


\section{Exact additive functors that preserve projective resolutions}

In this section we relate the Igusa-Todorov functions between two categories of modules when there exist  functors with nice properties. 
From now on we will only consider exact and additive functors. We recall that an additive functor $F: \mod A \rightarrow \mod B$ preserves projective modules if and only if $F(A)$ is a direct summand of $B^n$. These functors verifies the next remark.

\begin{obs}\label{diagrama1}

Let $F : \mod A \rightarrow \mod B$ be an exact and additive functor such that $F(A)$ is direct summand of $B^n$, then $F$ induces a group morphism $\bar{F} : K_0(A) \rightarrow K_0(B)$ such that the following diagram is commutative:

$$\xymatrix{K_0(A) \ar[r]^{\bar{F}} \ar[d]_{\bar{\Omega}_A} & K_0(B) \ar[d]^{\bar{\Omega}_B} \\ \ar[r]^{\bar{F}} K_0(A) & K_0(B)}$$

\end{obs}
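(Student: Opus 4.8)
The plan is to define $\bar{F}$ on generators of $K_0(A)$, check that it respects the two defining relations of the group, and then verify commutativity of the square by combining exactness of $F$ with Schanuel's lemma.

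First I would set $\bar{F}([M]) = [F(M)]$ for each finitely generated $A$-module $M$, and check that this extends to a well-defined homomorphism. For the relation $[M]-[M']-[M'']$ coming from $M \cong M' \oplus M''$, additivity of $F$ gives $F(M) \cong F(M') \oplus F(M'')$, so $[F(M)] = [F(M')] + [F(M'')]$ in $K_0(B)$. For the relation killing projectives, I must show $F$ sends projectives to projectives. Any projective $A$-module $P$ is a direct summand of some $A^k$; since $F$ is additive, $F(P)$ is a direct summand of $F(A)^k$, which by hypothesis is a direct summand of $(B^n)^k = B^{nk}$. Hence $F(P)$ is a direct summand of a free $B$-module, so it is projective and $[F(P)] = 0$ in $K_0(B)$. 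This establishes that $\bar{F}$ is a well-defined group morphism.

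For commutativity it suffices to check the two composites agree on a generator $[M]$. On one side $\bar{F}(\bar{\Omega}_A([M])) = [F(\Omega_A(M))]$, and on the other $\bar{\Omega}_B(\bar{F}([M])) = [\Omega_B(F(M))]$. To compare them I would apply the exact functor $F$ to a syzygy sequence $0 \to \Omega_A(M) \to P \to M \to 0$ with $P$ projective, obtaining an exact sequence $0 \to F(\Omega_A(M)) \to F(P) \to F(M) \to 0$ in $\mod B$. Since $F(P)$ is projective by the previous paragraph, $F(\Omega_A(M))$ is a first syzygy of $F(M)$ realized through $F(P)$. Comparing with the sequence $0 \to \Omega_B(F(M)) \to Q \to F(M) \to 0$ coming from a projective cover, Schanuel's lemma yields $F(\Omega_A(M)) \oplus Q \cong \Omega_B(F(M)) \oplus F(P)$. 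Passing to $K_0(B)$ and using that projective classes vanish gives $[F(\Omega_A(M))] = [\Omega_B(F(M))]$, which is precisely the desired equality.

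The main obstacle is exactly this commutativity step, because $F$ need not carry a projective cover to a projective cover: $F(P)$ is projective, but the induced sequence need not be the one that computes $\Omega_B(F(M))$ minimally. Schanuel's lemma is the tool that absorbs this discrepancy, since it shows that any two syzygies obtained from projective surjections onto the same module agree up to projective summands, and those summands are zero in $K_0(B)$.
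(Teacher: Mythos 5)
Your proof is correct; the paper states this remark without any proof, and your argument is exactly the intended one (the paper notes immediately beforehand that $F(A)$ being a direct summand of $B^n$ is equivalent to $F$ preserving projectives, which is the key fact you verify). Your use of Schanuel's lemma to absorb the failure of $F$ to carry projective covers to projective covers is precisely the detail needed to make the commutativity of the square with $\bar{\Omega}_A$ and $\bar{\Omega}_B$ rigorous, since the discrepancy consists of projective summands that vanish in $K_0(B)$.
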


\begin{teo}\label{teorema iny}
Let $A$ and $B$ be Artin algebras, and $F : \mod A \rightarrow \mod B$ an exact additive functor such that $F(A)$ is direct summand of $B^n$. Assume that $\bar{F}$ is a monomorphism, then:    

\begin{enumerate}

\item  $\phi_A (M) \leq \phi_B(F(M))$ for all $M \in \mod A$,
\item  $\pd_A M = \pd_B F(M)$ for all $M \in \mod A$ such that $\pd_A M < \infty$,
\item  $\psi_A (M) \leq \psi_B(F(M))$ for all $M \in \mod A$.

\end{enumerate}

\begin{proof}\ 
\begin{enumerate}
\item 
We have that $\bar{F} \bar{\Omega}^i (\langle \add M \rangle) = \bar{\Omega}^i \bar{F} (\langle \add M \rangle)$ because of the commutativity from Remark \ref{diagrama1}. On the other hand $\rk \bar{F} \bar{\Omega}^i \langle \add M \rangle = \rk \bar{\Omega}^i \langle \add M \rangle $ because $\bar{F}$ is a monomorphism, then $\eta_{\bar{\Omega}} (\bar{F} \langle \add M \rangle) = \phi_A(M)$. Now, using that $\bar{F}\langle \add M\rangle \subseteq \langle \add FM\rangle$, we get that $\phi_A(M) \leq \phi_B(F(M))$.
\item If $\pd_A M = n < \infty $, then there exist a exact sequence:
$$\xymatrix{ 0 \ar[r] & P_n \ar[r] & \ldots \ar[r] & P_1 \ar[r] & P_0 \ar[r] & M \ar[r] & 0}.$$
Applying the functor $F$, we obtain the following exact sequence:
$$\xymatrix{ 0 \ar[r] & F(P_n) \ar[r] & \ldots \ar[r] & F(P_1) \ar[r] & F(P_0) \ar[r] & F(M) \ar[r] & 0},$$
where each $F(P_i)$ is projective for $i = 0,\dots,n$, then $\pd_B F(M) \leq n$. Using \cite{IT}, the previous statement implies that $\pd_A M \leq \pd_B F(M)$. 
\item We have $$\psi_A(M) = \phi_A(M) + \sup \{ \pd N : \pd N < \infty \ and \ N\oplus N' \cong \Omega_A^{\phi_A(M)} (M)\}.$$
\begin{itemize}
\item If $\psi_A(M) \leq \phi_B(F(M))$ the statement clearly follows.
\item If $\phi_A(M) \leq \phi_B(F(M)) < \psi_A(M)$, there exists $N \in \mod A$ such that $N \oplus N' \cong \Omega_A^{\phi_A(M)} (M) $ with $\pd_A (N) = \psi_A(M)- \phi_A(M)> \phi_B(F(M))-\phi_A(M)$. Then $F(N) \oplus F(N') \cong \Omega_B^{\phi_A(M)} (F(M))$ because of the commutativity from Remark \ref{diagrama1}, in particular there exist an isomorphism $\Omega_B^{\phi_B(F(M))-\phi_A(M)}(F(N) \oplus F(N')) \cong \Omega_B^{\phi_B(F(M))} (F(M))$. On the other hand we get that $\pd ( \Omega_A^{\phi_B(F(M))-\phi_A(M)} (N) ) = \pd_A N + \phi_A(M)-\phi_B(F(M))$, because $\pd_A N = \psi_A(M)- \phi_A(M)> \phi_B(F(M))-\phi_A(M)$.\\
Finally, we obtain $\psi_B(F(M)) \geq \phi_B(F(M))+\pd_A(N) +  \phi_A(M) - \phi_B(F(M)) = \psi_A(M)$.
\end{itemize}
\end{enumerate}
\end{proof}  

\end{teo}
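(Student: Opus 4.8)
The plan is to route everything through the commutative square of Remark~\ref{diagrama1}, which gives $\bar{\Omega}_B\bar{F}=\bar{F}\bar{\Omega}_A$, together with the two consequences of $\bar{F}$ being a monomorphism: it preserves the rank of every subgroup of $K_0(A)$, and its restriction to any subgroup is still a monomorphism. The organizing idea is to rewrite $\phi_A(M)$ as $\eta_{\bar{\Omega}_A}(\langle\add M\rangle)$ and then transport the entire rank-stabilization computation from $K_0(A)$ into $K_0(B)$ along $\bar{F}$.

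For item (1), I would first note that iterating the commutativity yields $\bar{F}\bar{\Omega}_A^i\langle\add M\rangle=\bar{\Omega}_B^i\bar{F}\langle\add M\rangle$ for every $i$, and since $\bar{F}$ is injective it restricts to an isomorphism onto its image, so $\rk\bar{\Omega}_B^i\bar{F}\langle\add M\rangle=\rk\bar{\Omega}_A^i\langle\add M\rangle$ for all $i$. Thus the two rank sequences coincide and stabilize simultaneously, i.e. $\eta_{\bar{\Omega}_B}(\bar{F}\langle\add M\rangle)=\phi_A(M)$. It then remains to compare $\bar{F}\langle\add M\rangle$ with $\langle\add F(M)\rangle$: as $F$ is additive, the image of each indecomposable summand of $M$ decomposes into summands of $F(M)$, giving $\bar{F}\langle\add M\rangle\subseteq\langle\add F(M)\rangle$. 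Finally I would use the monotonicity $\eta_{\bar{\Omega}_B}(G)\le\eta_{\bar{\Omega}_B}(H)$ for $G\subseteq H$, which is immediate from the definition since the restriction of the monomorphism $\bar{\Omega}_B|_{\bar{\Omega}_B^{l+s}H}$ to the subgroup $\bar{\Omega}_B^{l+s}G$ remains a monomorphism. Chaining these gives $\phi_A(M)=\eta_{\bar{\Omega}_B}(\bar{F}\langle\add M\rangle)\le\eta_{\bar{\Omega}_B}(\langle\add F(M)\rangle)=\phi_B(F(M))$.

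For item (2), the inequality $\pd_B F(M)\le\pd_A M$ is the easy half: apply the exact functor $F$ to a finite minimal projective resolution of $M$; since $F(A)$ is a summand of $B^n$ each $F(P_i)$ is projective, so $F$ turns the resolution into a finite projective resolution of $F(M)$. For the reverse I would avoid any construction and reuse item (1): since $\pd_A M<\infty$ we have $\phi_A(M)=\pd_A M$ by Proposition~\ref{it1}(1), and $\pd_B F(M)\le\pd_A M<\infty$ forces $\phi_B(F(M))=\pd_B F(M)$ as well; then item (1) reads $\pd_A M\le\pd_B F(M)$, and the two inequalities combine to equality.

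Item (3) is where the real work sits. Writing $\psi_A(M)=\phi_A(M)+d$ with $d=\sup\{\pd_A N:\Omega_A^{\phi_A(M)}(M)\cong N\oplus N',\ \pd_A N<\infty\}$, I would split on whether $\psi_A(M)\le\phi_B(F(M))$, in which case the claim is immediate from $\phi_B(F(M))\le\psi_B(F(M))$. Otherwise I pick the summand $N$ realizing $d$, so $\pd_A N=d>\phi_B(F(M))-\phi_A(M)=:k\ge0$. Applying $F$ and using the commutative square (so that $F$ sends a projective resolution of $M$ to one of $F(M)$), I would identify $\Omega_B^{\phi_A(M)}(F(M))$ with $F(N)\oplus F(N')$ up to a projective summand, apply a further $\Omega_B^{k}$, and thereby exhibit $\Omega_B^{k}(F(N))$ as a direct summand of $\Omega_B^{\phi_B(F(M))}(F(M))$. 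By item (2) we have $\pd_B F(N)=\pd_A N=d<\infty$, so this summand has finite projective dimension $d-k$; feeding it into the definition of $\psi_B$ gives $\psi_B(F(M))\ge\phi_B(F(M))+(d-k)=\phi_A(M)+d=\psi_A(M)$. The step I expect to be most delicate is precisely this transfer of syzygies: because $F$ need not carry a minimal resolution to a minimal one, the identification $F(\Omega_A^i M)\cong\Omega_B^i F(M)$ only holds modulo projectives, so the bookkeeping must ensure that the finite-dimensional summand $\Omega_B^k(F(N))$ genuinely survives inside the minimal syzygy $\Omega_B^{\phi_B(F(M))}(F(M))$ and keeps projective dimension exactly $d-k$.
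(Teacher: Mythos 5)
Your proposal is correct and follows essentially the same route as the paper: the same rank-transport argument via the commutative square for item (1), the same combination of ``apply $F$ to a finite resolution'' plus item (1) for item (2), and the same case split with the summand $N$ realizing the supremum for item (3). In fact, you are slightly more careful than the paper at the delicate step of item (3) --- noting that $F(\Omega_A^i M)$ agrees with $\Omega_B^i F(M)$ only up to projective summands and explicitly invoking item (2) to get $\pd_B F(N)=\pd_A N$ --- which patches a point the paper's proof glosses over.
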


\begin{coro}\label{iny+ind}

Let $A$ and $B$ be Artin algebras, and $F : \mod A \rightarrow \mod B$ be an exact additive functor such that $F$ preserves indecomposable modules and $F(A)$ is direct summand of $B^n$. Assume that $\bar{F}$ is a monomorphism, then:    

\begin{enumerate}

\item  $\phi_A (M) = \phi_B(F(M))$ for all $M \in \mod A$,
\item  $\pd_A M = \pd_B F(M)$ for all $M \in \mod A$ such that $\pd_A M < \infty$,
\item  $\psi_A (M) = \psi_B(F(M))$ for all $M \in \mod A$.

\end{enumerate}

The following example shows that the inequalities in Theorem \ref{teorema iny} could be strict.

\begin{ej}
Let $Q$ and $\bar{Q}$ be the quivers:

$$Q = \xymatrix{1 \ar[r] & 2  \ar@(dr,ur)}\ \ \ \ ,$$

$$\tilde{Q} = \xymatrix{  1 \ar[r] & 2  \ar@(ul,ur) & 3 \ar[l]}.$$

Consider $F_1, F_2:  \mod \frac{\Bbbk Q}{J^2} \rightarrow \mod \frac{\Bbbk \tilde{Q}}{J^2}$ the functors such that:

\begin{itemize}

\item for each representation $\xymatrix{  V_1 \ar[r]^f & V_2  \ar@(dr,ur)_g } \in  \mod \frac{\Bbbk Q}{J^2}$, then $$F_1 \left(\xymatrix{  V_1 \ar[r]^f & V_2  \ar@(dr,ur)_g }\right) =   \xymatrix{  V_1 \ar[r]^f & V_2  \ar@(ul,ur)^g & 0 \ar[l]}, $$
\vspace{.5cm}
and for every morphism $(\alpha, \beta)$, $F_1((\alpha, \beta)) = (\alpha, \beta, 0)$.

\item For each representation $\xymatrix{  V_1 \ar[r]^f & V_2  \ar@(dr,ur)_g } \in  \mod \frac{\Bbbk Q}{J^2}$, then $$F_2 \left(\xymatrix{  V_1 \ar[r]^f & V_2  \ar@(dr,ur)_g }\right) =   \xymatrix{  0 \ar[r] & V_2  \ar@(ul,ur)^g & V_1 \ar[l]_f},$$

\vspace{.5cm}

and for every morphism $(\alpha, \beta)$, $F_2((\alpha, \beta)) = (0, \beta, \alpha)$.

\end{itemize}

For $i = 1,2$, $F_i$ is an exact additive functor such that $F_i\left(\frac{\Bbbk Q}{J^2}\right)$ is a direct summand of $\frac{\Bbbk \tilde{Q}}{J^2}$. Now, consider the functor $F = F_1 \oplus F_2$. $F$ is an exact additive functor such that $F\left(\frac{\Bbbk Q}{J^2}\right)$ is a direct summand of $\left(\frac{\Bbbk \tilde{Q}}{J^2}\right)^2$ and $\bar{F}$ is a monomorphism.
Finally, for $S_1 \in \mod \frac{\Bbbk Q}{J^2}$, $\phi(S_1) = 0$ and $\phi(F(S_1)) = 1$.

\end{ej}

\end{coro}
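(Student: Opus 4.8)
The plan is to upgrade each inequality of Theorem \ref{teorema iny} to an equality, the only genuinely new ingredient being that $F$ preserves indecomposable modules. Throughout I keep the notation of that theorem and use that $\phi_C(X) = \eta_{\bar{\Omega}}(\langle \add X\rangle)$ for any algebra $C$.

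For statement $(1)$, the proof of Theorem \ref{teorema iny}$(1)$ already establishes $\phi_A(M) = \eta_{\bar{\Omega}}(\bar{F}\langle \add M\rangle)$ together with the inclusion $\bar{F}\langle \add M\rangle \subseteq \langle \add FM\rangle$. First I would observe that the new hypothesis makes this inclusion an equality. Writing $M = \bigoplus_i M_i$ with each $M_i$ indecomposable, each $F(M_i)$ is again indecomposable, so the indecomposable summands of $FM$ are exactly the $F(M_i)$; hence $\langle \add FM\rangle$ is generated by the classes $[F(M_i)] = \bar{F}[M_i]$, i.e. $\langle \add FM\rangle = \bar{F}\langle \add M\rangle$. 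The projective $M_i$ cause no trouble, since $F(A)$ is a summand of $B^n$ forces $F$ to send projectives to projectives, so that $[M_i]$ and $[F(M_i)]$ vanish consistently. Therefore $\phi_B(FM) = \eta_{\bar{\Omega}}(\langle \add FM\rangle) = \eta_{\bar{\Omega}}(\bar{F}\langle \add M\rangle) = \phi_A(M)$. Statement $(2)$ is already an equality in Theorem \ref{teorema iny}$(2)$, so nothing new is required there.

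For $(3)$, by Theorem \ref{teorema iny}$(3)$ it suffices to prove $\psi_B(FM)\le\psi_A(M)$. Put $k := \phi_A(M) = \phi_B(FM)$, using $(1)$. The key structural point is to relate $\bar{\Omega}^{k}$ applied to $FM$ with $F$ applied to the $k$-th syzygy of $M$. Because $F$ is exact and preserves projectives but need not preserve projective covers, $F(\Omega M)$ and $\Omega(FM)$ agree only up to projective summands; a Schanuel-type comparison of the projective cover sequence of $FM$ with the image under $F$ of the projective cover sequence of $M$, iterated $k$ times via Remark \ref{diagrama1}, shows that $\Omega^{k}(FM)$ and $F(\Omega^{k} M)$ have the same non-projective part. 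Writing $\Omega^{k} M = \bigoplus_j N_j$ with $N_j$ indecomposable, the non-projective indecomposable summands of $\Omega^{k}(FM)$ are thus among the $F(N_j)$. Now take any summand $L$ of $\Omega^{k}(FM)$ with $\pd_B L < \infty$, which we may assume indecomposable. If $L$ is projective then $\pd_B L = 0$; otherwise $L \cong F(N_j)$ for some $j$, and Theorem \ref{teorema iny}$(2)$ gives $\pd_A N_j = \pd_B L$ with $N_j$ a summand of $\Omega^{k} M$. In either case $\pd_B L \le \sup\{\pd_A N : N \mid \Omega^{k} M,\ \pd_A N < \infty\}$, and taking the supremum over all such $L$ yields $\psi_B(FM) \le k + \sup_A = \psi_A(M)$.

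I expect the main obstacle to be precisely this last comparison in $(3)$: controlling the failure of $F$ to preserve projective covers and verifying that the extra projective summands which may appear in $\Omega^{k}(FM)$ are harmless, since they contribute $\pd = 0$ and hence do not raise the supremum, while every summand of positive finite projective dimension is non-projective and therefore of the form $F(N_j)$, where Theorem \ref{teorema iny}$(2)$ applies. Once the non-projective parts of $\Omega^{k}(FM)$ and $F(\Omega^{k} M)$ are identified, the three equalities follow as above.
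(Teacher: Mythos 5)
Your proposal is essentially correct and follows the route the paper intends: the paper states this corollary with no proof at all, treating it as immediate from Theorem \ref{teorema iny} once one notes that preservation of indecomposables upgrades the inclusion $\bar{F}\langle \add M\rangle \subseteq \langle \add FM\rangle$ to an equality. Your argument for (1) is exactly that observation (Krull--Schmidt identifies the indecomposable summands of $FM$ with the $F(M_i)$, and projective summands vanish consistently on both sides), and (2) is indeed already an equality in the theorem. Your treatment of (3) supplies detail the paper omits entirely, and the Schanuel-type comparison showing that the non-projective indecomposable summands of $\Omega_B^{k}(FM)$ are among the $F(N_j)$ is the right mechanism: since $F$ is exact and preserves projectives, applying $F$ to a projective resolution of $M$ gives a (possibly non-minimal) projective resolution of $FM$, so $F(\Omega_A^{k}M)$ and $\Omega_B^{k}(FM)$ agree up to projective direct summands.

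One step is cited beyond what it states. You invoke Theorem \ref{teorema iny}(2) to conclude $\pd_A N_j = \pd_B L$ knowing only $\pd_B L = \pd_B F(N_j) < \infty$, whereas that theorem assumes $\pd_A N_j < \infty$; a priori $N_j$ could have infinite projective dimension while $F(N_j)$ has finite one. The converse transfer of finiteness is true here, but it needs the monomorphism hypothesis and a short argument you did not give: if $\pd_B F(N_j) = d < \infty$, then $\Omega_B^{d}(F(N_j))$ is projective, so $0 = \bar{\Omega}_B^{d}[F(N_j)] = \bar{F}\left(\bar{\Omega}_A^{d}[N_j]\right)$ by Remark \ref{diagrama1}, and injectivity of $\bar{F}$ forces $\bar{\Omega}_A^{d}[N_j] = 0$, i.e. $\Omega_A^{d}(N_j)$ is projective and $\pd_A N_j \leq d$. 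With this two-line patch (which genuinely uses that $\bar{F}$ is a monomorphism, so it cannot simply be waved away), your proof of (3) is complete and the three equalities follow as you describe.
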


We obtain the following corollary applying the Theorem \ref{teorema iny}. This result is a generalization of Theorem 3.4 of \cite{M}.

\begin{coro}

Let $A$ and $B$ be Artin algebras, and $F : \mod A \rightarrow \mod B$ an exact additive functor such that $F(A)$ is direct summand of $B^n$ and $\bar{F}$ is a monomorphism. Then $\fidim (A) \leq \fidim (B)$,  $\psidim (A) \leq \psidim (B)$ and $\fdim (A) \leq \fdim (B)$.  

\end{coro}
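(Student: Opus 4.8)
The final statement to prove is the corollary: under the standing hypotheses (an exact additive functor $F:\mod A\to\mod B$ with $F(A)$ a direct summand of $B^n$ and $\bar F$ a monomorphism), we have three dimension inequalities $\fidim(A)\le\fidim(B)$, $\psidim(A)\le\psidim(B)$, and $\fdim(A)\le\fdim(B)$. Here $\fidim$ denotes the $\phi$-dimension, $\psidim$ the $\psi$-dimension, and $\fdim$ the finitistic dimension; the crucial point is that all three are defined as suprema over the whole module category $\mod A$ of a pointwise invariant that Theorem \ref{teorema iny} already controls.

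The plan is to reduce each global inequality to the corresponding pointwise inequality supplied by Theorem \ref{teorema iny}, exploiting that a supremum is monotone under a term-by-term comparison of the indexing families.

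<br>

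**The plan.** The argument is essentially a one-line supremum manipulation repeated three times, so I would keep it brief and lean entirely on Theorem \ref{teorema iny}. First I would recall that $\fidim(A)=\sup\{\phi_A(M):M\in\mod A\}$ by definition. For the first inequality, I would fix an arbitrary $M\in\mod A$ and invoke part (1) of Theorem \ref{teorema iny} to write $\phi_A(M)\le\phi_B(F(M))$; since $F(M)\in\mod B$, the right-hand side is at most $\fidim(B)$, so $\phi_A(M)\le\fidim(B)$ for every $M$, and taking the supremum over $M$ yields $\fidim(A)\le\fidim(B)$. The second inequality, $\psidim(A)\le\psidim(B)$, is identical in structure but uses part (3) of the theorem: $\psi_A(M)\le\psi_B(F(M))\le\psidim(B)$ for all $M$, hence the supremum bound.

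For the finitistic-dimension inequality I would restrict attention to modules of finite projective dimension, since $\fdim(A)=\sup\{\pd_A M:\pd_A M<\infty\}$. For such an $M$, part (2) of Theorem \ref{teorema iny} gives the exact equality $\pd_A M=\pd_B F(M)$; in particular $\pd_B F(M)<\infty$, so $\pd_A M=\pd_B F(M)\le\fdim(B)$. Taking the supremum over all $M$ with $\pd_A M<\infty$ then gives $\fdim(A)\le\fdim(B)$.

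<br>

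**Anticipated obstacle.** There is almost no genuine difficulty here, because the heavy lifting is done by Theorem \ref{teorema iny}; the corollary is a formal consequence. The only point requiring a moment's care is the finitistic case: one must use that part (2) of the theorem guarantees not merely an inequality but that finiteness of $\pd_A M$ is transported to finiteness of $\pd_B F(M)$, so that $\pd_B F(M)$ is legitimately among the terms over which $\fdim(B)$ is a supremum. Beyond verifying that each $F(M)$ indeed lands in $\mod B$ (immediate from the functor's codomain) and that the suprema are taken over nonempty families, no obstacle arises.
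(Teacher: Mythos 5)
Your proposal is correct and is exactly the argument the paper intends: the paper states this corollary without proof as a direct application of Theorem \ref{teorema iny}, and your supremum manipulation (pointwise inequalities $\phi_A(M)\le\phi_B(F(M))$, $\psi_A(M)\le\psi_B(F(M))$, and the equality $\pd_A M=\pd_B F(M)$ for finite projective dimension, each bounded by the corresponding dimension of $B$) is precisely the omitted routine verification. Your care about finiteness of $\pd_B F(M)$ being transported is the right point to check, and it is indeed guaranteed by part (2) of the theorem.
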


\begin{defi}

We denote by:

\begin{itemize}

\item $\bar{K}_1(A)$ the full additive subcategory of $\mod A$, where the objects are $M \in \mod A$ such that $[M] \in K_1(A)$.

\item $\bar{K}_i(A)$ for $i\in \{2,3,\ldots \}$ the full additive subcategory of $\mod A$, where the objects are $M \in \mod A$ such that there exist $[N] \in K_i(A)$ with $M\oplus M' \cong N$.

\end{itemize}
\end{defi}

\begin{obs}

Let $G : \bar{K}_i(A) \rightarrow \mod B$ be an exact and additive functor such that $G(P)$ is direct summand of $B^n$ for every $P$ projective $A$-module in $\bar{K}_1(A)$, then $G$ induces a group morphism $\bar{G} : K_i(A) \rightarrow K_0(B)$ such that the following diagram is commutative:

$$\xymatrix{K_i(A) \ar[r]^{\bar{G}} \ar[d]_{\bar{\Omega}_A} & K_0(B) \ar[d]^{\bar{\Omega}_B} \\ \ar[r]^{\bar{G}} K_i(A) & K_0(B)}$$

\end{obs}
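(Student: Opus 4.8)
The plan is to follow the argument of Remark~\ref{diagrama1} almost verbatim, since morally this is the same statement with $K_0(A)$ replaced by the subgroup $K_i(A)$ and $\mod A$ replaced by the subcategory $\bar{K}_i(A)$. The only genuinely new point is that $G$ is defined only on $\bar{K}_i(A)$, so before applying $G$ to any module I must check that this module actually lies in $\bar{K}_i(A)$, and I must check that $G$ still sends the projectives I encounter to projective $B$-modules.

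First I would record the closure properties of $\bar{K}_i(A)$ that the argument needs. Recall that $K_0(A)$ is free abelian on the iso-classes of indecomposable non-projective modules, and that $K_{j+1}(A)=\bar{\Omega}_A(K_j(A))\subseteq K_j(A)$, so $K_i(A)\supseteq K_{i+1}(A)\supseteq\cdots$. From the definition, $\bar{K}_i(A)$ is closed under direct summands; moreover if $N\in\bar{K}_i(A)$ then $[N]\in K_i(A)$, whence $[\Omega_A N]=\bar{\Omega}_A[N]\in K_{i+1}(A)\subseteq K_i(A)$ and so $\Omega_A N\in\bar{K}_i(A)$. Any projective $P$ satisfies $[P]=0\in K_i(A)$, so $P\in\bar{K}_i(A)$ and also $P\in\bar{K}_1(A)$; hence the hypothesis applies and $G(P)$ is a direct summand of $B^n$, in particular projective. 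Finally I would note that every indecomposable non-projective $X$ occurring with non-zero coefficient in some $x\in K_i(A)$ lies in $\bar{K}_i(A)$: writing $x=\bar{\Omega}_A^{i}([Y]-[Y'])=[\Omega_A^i Y]-[\Omega_A^i Y']$ with $\Omega_A^i Y,\Omega_A^i Y'\in\bar{K}_i(A)$, such an $X$ must be a summand of one of these two modules.

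With this in hand I would define $\bar{G}$ on the basis of $K_i(A)$ by $\bar{G}[X]=[G(X)]$ for each indecomposable non-projective $X\in\bar{K}_i(A)$, extended $\Z$-linearly. This is a well-defined group homomorphism: $K_i(A)$ is free abelian and, by the previous paragraph, every element of $K_i(A)$ involves only such $X$, for which $[G(X)]$ is defined. Additivity of $G$ together with $[G(P)]=0$ for projective $P$ then yields $\bar{G}[N]=[G(N)]$ for every $N\in\bar{K}_i(A)$, so both defining relations of $K_0(B)$ are respected.

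It remains to prove commutativity, which I would do exactly as in Remark~\ref{diagrama1}. For $N\in\bar{K}_i(A)$ take a projective cover $0\to\Omega_A N\to P\to N\to 0$; by the closure remarks $P,\Omega_A N\in\bar{K}_i(A)$, so exactness of $G$ gives an exact sequence $0\to G(\Omega_A N)\to G(P)\to G(N)\to 0$ with $G(P)$ projective. Comparing it with a projective cover of $G(N)$ via Schanuel's lemma gives $[G(\Omega_A N)]=[\Omega_B G(N)]$ in $K_0(B)$, that is $\bar{G}\,\bar{\Omega}_A[N]=\bar{\Omega}_B\,\bar{G}[N]$; since these classes generate $K_i(A)$, the square commutes. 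The step I expect to demand the most care is the bookkeeping of the second paragraph, namely verifying that $\bar{K}_i(A)$ is stable under $\Omega_A$ and under passage to projective covers and contains every indecomposable appearing in $K_i(A)$; once these domain issues are settled, well-definedness and commutativity are routine transcriptions of the case $i=0$ already treated in Remark~\ref{diagrama1}.
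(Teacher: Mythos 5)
Your proof has the right architecture, and it is the one the paper silently intends (the Remark is stated without proof, as a routine transcription of Remark \ref{diagrama1}): define $\bar{G}$ on classes of indecomposables of $\bar{K}_i(A)$ and extend linearly, observe that projectives lie in $\bar{K}_i(A)$ and are sent to projectives, and obtain commutativity from exactness of $G$ plus Schanuel's lemma. Your well-definedness argument (every indecomposable occurring in an element of $K_i(A)$ is a summand of some $\Omega^i_A Y$, hence lies in $\bar{K}_i(A)$) and your commutativity argument are both correct. However, one of your preparatory claims is false: for $i \geq 2$ it is \emph{not} true that $N \in \bar{K}_i(A)$ implies $[N] \in K_i(A)$. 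Membership in $\bar{K}_i(A)$ only says that $N$ is a direct summand of some $M$ with $[M] \in K_i(A)$, and $K_i(A)$ need not contain the classes of the individual summands. Concretely, take $A = \frac{\Bbbk Q}{J^2}$ where $Q$ has vertices $1,2,3$ and arrows $1 \to 2$, $1 \to 3$, $2 \to 1$, $3 \to 1$. Then $K_1(A)$ has basis $[S_1], [S_2], [S_3]$, so $K_2(A) = \bar{\Omega}(K_1(A))$ is generated by $[\Omega S_1] = [S_2] + [S_3]$ and $[\Omega S_2] = [\Omega S_3] = [S_1]$; every element of $K_2(A)$ has equal coefficients on $[S_2]$ and $[S_3]$, so $[S_2] \notin K_2(A)$, yet $S_2 \in \bar{K}_2(A)$ because it is a direct summand of $\Omega S_1$.

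Fortunately the only consequence you draw from this false claim, namely that $\bar{K}_i(A)$ is closed under $\Omega_A$, is still true, and the repair is immediate: if $N \oplus N'' \cong M$ with $[M] \in K_i(A)$, then $\Omega_A N \oplus \Omega_A N'' \cong \Omega_A M$ and $[\Omega_A M] = \bar{\Omega}_A[M] \in K_{i+1}(A) \subseteq K_i(A)$, so $\Omega_A N$ is a summand of a module whose class lies in $K_i(A)$ and hence $\Omega_A N \in \bar{K}_i(A)$ by definition; for $i = 1$ your original argument is fine, since there $N \in \bar{K}_1(A)$ literally means $[N] \in K_1(A)$. A second, much smaller, imprecision: closure of $\bar{K}_1(A)$ under direct summands is not ``from the definition'' but follows from Lemma \ref{K_1 aditiva} (direct summands are submodules); for $i \geq 2$ it is indeed definitional. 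With these two repairs your proof is complete and is the natural way of filling in the detail the paper omits.
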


\begin{coro}\label{bar{K}_1}

Let $A$ and $B$ be Artin algebras, and $G : \bar{K}_i(A) \rightarrow \mod B$ an exact additive functor such that $F(P)$ is direct summand of $B^n$ for every $P$ projective $A$-module in $\bar{K}_i(A)$. Assume that $\bar{G}$ is a monomorphism, then:    

\begin{enumerate}

\item  $\phi_A (M) \leq \phi_B(G(M))$ for all $M \in \bar{K}_i(A)$,
\item  $\pd_A M = \pd_B G(M)$ for all $M \in \bar{K}_1(A)$ such that $\pd_A M < \infty$,
\item  $\psi_A (M) \leq \psi_B(G(M))$ for all $M \in \bar{K}_i(A)$.

\end{enumerate}

\end{coro}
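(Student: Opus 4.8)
The plan is to follow the proof of Theorem \ref{teorema iny} essentially verbatim, substituting the commutative square of the Remark immediately preceding this corollary (which supplies the induced morphism $\bar{G} : K_i(A) \to K_0(B)$ with $\bar{G}\bar{\Omega}_A = \bar{\Omega}_B\bar{G}$ on $K_i(A)$) for the role played there by Remark \ref{diagrama1}. Since $G$ is only defined on the subcategory $\bar{K}_i(A)$, the first thing I would do is record the closure properties that make every object and every class fed into $G$ or $\bar{G}$ legitimate. Concretely: (a) $\bar{K}_i(A)$ is closed under $\Omega_A$, because if $M$ is a summand of $N$ with $[N]\in K_i(A)$ then $\Omega_A M$ is a summand of $\Omega_A N$ and $[\Omega_A N] = \bar{\Omega}_A[N]\in\bar{\Omega}_A(K_i(A)) = K_{i+1}(A)\subseteq K_i(A)$, using the descending chain $K_0(A)\supseteq K_1(A)\supseteq\cdots$ together with $\bar{\Omega}_A(K_i(A))=K_{i+1}(A)$; (b) $\bar{K}_i(A)$ is closed under direct summands; and (c) for $M\in\bar{K}_i(A)$ one has $\langle\add M\rangle\subseteq K_i(A)$, so that $\bar{G}$ is defined on $\bar{\Omega}_A^j\langle\add M\rangle\subseteq K_{i+j}(A)\subseteq K_i(A)$ for every $j\geq 0$, and each indecomposable summand class $[M_k]$ lies in $K_i(A)$.

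For part (1), with these facts in hand the argument is identical to Theorem \ref{teorema iny}(1): commutativity gives $\bar{G}\bar{\Omega}_A^j\langle\add M\rangle = \bar{\Omega}_B^j\bar{G}\langle\add M\rangle$, injectivity of $\bar{G}$ gives $\rk(\bar{G}\bar{\Omega}_A^j\langle\add M\rangle) = \rk(\bar{\Omega}_A^j\langle\add M\rangle)$ for all $j$, whence $\eta_{\bar{\Omega}_B}(\bar{G}\langle\add M\rangle) = \eta_{\bar{\Omega}_A}(\langle\add M\rangle) = \phi_A(M)$. Finally $\bar{G}\langle\add M\rangle = \langle\{[G(M_k)]\}\rangle\subseteq\langle\add G(M)\rangle$, where the $M_k$ are the indecomposable summands of $M$ (each in $\bar{K}_i(A)$ by (b), with $[M_k]\in K_i(A)$ by (c), so $\bar{G}[M_k]=[G(M_k)]$ and $G(M_k)$ is a summand of $G(M)$), and this inclusion yields $\phi_A(M)\leq\phi_B(G(M))$.

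For part (2), I would take a finite projective resolution of $M\in\bar{K}_1(A)$ and apply $G$; exactness of $G$ and the hypothesis that the projectives occurring (which lie in $\bar{K}_1(A)$) go to summands of powers of $B$ give a finite projective resolution of $G(M)$, so $\pd_B G(M)\leq\pd_A M$, the reverse inequality being obtained exactly as in Theorem \ref{teorema iny}(2). For part (3), I would transcribe the $\psi$-argument of Theorem \ref{teorema iny}(3): writing $\Omega_A^{\phi_A(M)}(M) = N\oplus N'$ with $\pd_A N = \psi_A(M)-\phi_A(M)$, closure under $\Omega_A$ and summands from the first paragraph guarantees $N,\ N',\ \Omega_A^{\phi_A(M)}(M)\in\bar{K}_i(A)$, so $G$ may be applied; the commutativity of the square then gives $G(N)\oplus G(N')\cong\Omega_B^{\phi_A(M)}(G(M))$ up to projective summands, and combining $\pd_B G(N) = \pd_A N$ with part (1) and Proposition \ref{Huard2} produces $\psi_A(M)\leq\psi_B(G(M))$.

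The step I expect to be the genuine obstacle is the closure bookkeeping of the first paragraph, and within it item (c): that $\langle\add M\rangle$ actually lands in the domain $K_i(A)$ of $\bar{G}$, equivalently that every indecomposable summand of an object of $\bar{K}_i(A)$ has class in $K_i(A)$. Closure under $\Omega_A$ is formal, since it only uses $\bar{\Omega}_A(K_i(A))\subseteq K_i(A)$, and for $i\geq 2$ closure under summands is immediate from the definition of $\bar{K}_i(A)$; the delicate point is controlling the classes of the individual summands relative to the subgroup $K_i(A)$, which is where the precise definition of $\bar{K}_i(A)$ must be exploited. Once (a)--(c) are secured, no idea beyond Theorem \ref{teorema iny} is required, and the three conclusions follow from the same rank, resolution and $\psi$ computations.
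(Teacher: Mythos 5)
Your plan---transplanting the proof of Theorem \ref{teorema iny}, with the remark preceding the corollary supplying the commutative square for $\bar{G}$---is indeed the intended route, and your closure facts (a) and (b) are correct (for $i=1$, (b) follows from Lemma \ref{K_1 aditiva}, since direct summands are submodules). The genuine gap is exactly the item (c) that you single out as the crux and then leave unproved: it cannot be secured, because it is \emph{false} for $i\geq 2$. Take $A=\frac{\Bbbk Q}{J^2}$ where $Q$ has vertices $1,2,3,4$ and arrows $1\to 2$, $1\to 3$, $2\to 4$, $3\to 4$, $4\to 1$. All syzygies are semisimple, $K_1(A)=\langle [S_1],[S_2],[S_3],[S_4]\rangle$, and $K_2(A)=\bar{\Omega}(K_1(A))=\langle [S_1],\ [S_4],\ [S_2]+[S_3]\rangle$. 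The module $N=S_2\oplus S_3$ satisfies $[N]\in K_2(A)$ (indeed $N\cong\Omega^2(S_4)$), so both $N$ and its summand $S_2$ are objects of $\bar{K}_2(A)$; yet $[S_2]\notin K_2(A)$, because every element of $K_2(A)$ has equal coefficients at the basis elements $[S_2]$ and $[S_3]$ of the free abelian group $K_0(A)$. Hence for $i\geq 2$ the subgroup $\langle\add M\rangle$ of an object $M\in\bar{K}_i(A)$ need not lie in $K_i(A)$, so $\bar{G}$ (a map defined on $K_i(A)$) cannot even be applied to it, and the injectivity hypothesis on $K_i(A)$ gives no control over it; parts (1) and (3) of your transcription fail at their first line.

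What your argument does establish is the case $i=1$ (hence all of part (2), which is stated only for $\bar{K}_1(A)$, and parts (1), (3) when $i=1$), since there Lemma \ref{K_1 aditiva} really does give $\langle\add M\rangle\subseteq K_1(A)$. For $i\geq 2$ the most that Lemma yields is $[M_k]\in K_1(A)$ for each indecomposable summand $M_k$, whence $\bar{\Omega}_A^{\,i-1}\langle\add M\rangle\subseteq K_i(A)$; starting your rank/monomorphism comparison at syzygy level $i-1$ then produces only the weaker bound $\phi_A(M)\leq\max\{i-1,\ \phi_B(G(M))\}$. To recover the corollary as stated one must instead take $\bar{G}$ to be defined---and assumed injective---on the larger subgroup of $K_0(A)$ generated by the classes of \emph{all} objects of $\bar{K}_i(A)$ (this subgroup does contain $\langle\add M\rangle$ by your item (b), and the commutative square still holds on it by your item (a)); with that reinterpretation of the preceding remark your transcription of Theorem \ref{teorema iny} goes through word for word. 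As written, however, the proposal has a hole at precisely the point you identified but did not close, and that point is not a bookkeeping matter: it is where the statement's content for $i\geq 2$ actually lives.
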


\begin{coro}

Let $A$ and $B$ be Artin algebras such that $A$ is not self-injective. If $G : \bar{K}_i(A) \rightarrow \mod B$ is an exact additive functor such that $G(P)$ is a direct summand of $B^n$ for every $P$ projective $A$-module in $\bar{K}_i(A)$, then
\begin{itemize}

\item $\fidim A \leq \fidim B + i$, and

\item $\psidim A \leq \psidim B + i$.

\end{itemize}

\begin{proof}
Let $M$ be an $A$-module, then $\phi_A(M) \leq \phi_A(\Omega_A^{i}(M))+i$. Thus, by Corollary \ref{bar{K}_1}, we obtain $\phi_A (M) \leq \phi_A(\Omega_A^{i}(M)) + i \leq \phi_B(F(\Omega_A^{i} (M))) + i \leq \fidim (B) + i$.
\end{proof}
\end{coro}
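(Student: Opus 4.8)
The plan is to reduce both bounds to Corollary \ref{bar{K}_1} by replacing an arbitrary $A$-module with its $i$-th syzygy, which is exactly the place where the functor $G$ is defined. The two inequalities run in parallel, differing only in which inequality relating a module to its syzygy I iterate.

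First I would fix $M \in \mod A$ and descend to $\Omega_A^i(M)$. Iterating Proposition \ref{Huard1}(5) exactly $i$ times gives $\phi_A(M) \leq \phi_A(\Omega_A^i(M)) + i$, while iterating Proposition \ref{Huard2}(6) the same number of times gives $\psi_A(M) \leq \psi_A(\Omega_A^i(M)) + i$. This isolates the $i$-th syzygy as the single object that must be controlled.

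The step I expect to require the most care is verifying that $\Omega_A^i(M)$ genuinely lies in $\bar{K}_i(A)$, which is what licenses applying $G$ and invoking Corollary \ref{bar{K}_1}. Here I would use that $[\Omega_A^i(M)] = \bar{\Omega}^i[M] \in \bar{\Omega}^i(K_0(A)) = K_i(A)$: for $i = 1$ this membership is precisely the defining condition of $\bar{K}_1(A)$, and for $i \geq 2$ one takes $N = \Omega_A^i(M)$ and $M' = 0$ in the definition of $\bar{K}_i(A)$. With $\Omega_A^i(M) \in \bar{K}_i(A)$ secured, Corollary \ref{bar{K}_1}(1) and (3) yield $\phi_A(\Omega_A^i(M)) \leq \phi_B(G(\Omega_A^i(M)))$ and $\psi_A(\Omega_A^i(M)) \leq \psi_B(G(\Omega_A^i(M)))$.

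Finally I would bound the right-hand sides by the dimensions of $B$: since $G(\Omega_A^i(M)) \in \mod B$, the definitions give $\phi_B(G(\Omega_A^i(M))) \leq \fidim(B)$ and $\psi_B(G(\Omega_A^i(M))) \leq \psidim(B)$. Chaining the inequalities yields $\phi_A(M) \leq \fidim(B) + i$ and $\psi_A(M) \leq \psidim(B) + i$ for every $M$, and taking the supremum over all $M \in \mod A$ produces the two claimed bounds. I would remark on two points about the hypotheses: Corollary \ref{bar{K}_1} is stated with $\bar{G}$ a monomorphism, so this should be carried as an assumption here as well; and the hypothesis that $A$ is not self-injective is not actually needed for the inequalities, since by Theorem \ref{Phi = 0} a self-injective $A$ has $\fidim(A) = 0$ and the bounds then hold trivially.
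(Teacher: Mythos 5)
Your proposal is correct and follows essentially the same route as the paper's proof: the chain $\phi_A(M) \leq \phi_A(\Omega_A^i(M)) + i \leq \phi_B(G(\Omega_A^i(M))) + i \leq \fidim(B) + i$, with the supremum taken at the end and the $\psi$ case running in parallel. You additionally make explicit several points the paper leaves tacit — that $\Omega_A^i(M)$ indeed lies in $\bar{K}_i(A)$, and that the monomorphism hypothesis on $\bar{G}$ (required by Corollary \ref{bar{K}_1}) must be carried into the statement — which are genuine improvements in precision but not a different argument.
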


\subsection{Aplications}

\begin{prop}

let $A = \frac{\Bbbk Q}{J^2}$ be a radical square zero algebra and $C$ a full subquiver of $Q$ closed by successors, then 
$$\fidim \left(\frac{\Bbbk C}{J_C^2}\right) \leq \fidim (A)\mbox{ and }\psidim \left(\frac{\Bbbk C}{J_C^2}\right) \leq \psidim (A).$$

\begin{proof}

The functor $\iota: \frac{\Bbbk C}{J_C^2} \rightarrow A$ is an exact functor, it sends projective $\frac{\Bbbk C}{J_C^2}$-modules in projective $A$-modules and the induced group morphism $\bar{\iota} :K_0\left(\frac{\Bbbk C}{J_C^2}\right) \rightarrow K_0(A) $ is a monomorphism. Therefore, the thesis follows from Theorem \ref{teorema iny}.

\end{proof}

\end{prop}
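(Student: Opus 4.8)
The plan is to make the inclusion functor $\iota$ in the statement completely explicit and then verify, one by one, the hypotheses of Theorem \ref{teorema iny}, with $\frac{\Bbbk C}{J_C^2}$ playing the role of $A$ and $A$ playing the role of $B$. Concretely, I would define $\iota\colon \mod\frac{\Bbbk C}{J_C^2}\to\mod A$ as \emph{extension by zero}: a representation of $C$ is sent to the representation of $Q$ that agrees with it on the vertices and arrows of $C$ and is zero on every vertex and arrow outside $C$. The first point to check is that this lands in $\mod A$, i.e. that the length-two relations of $Q$ hold: a path $i\to j\to k$ in $Q$ either has $i\in C$, in which case $j,k\in C$ because $C$ is closed by successors and the whole path lives in $C$ (so the relation follows from $J_C^2=0$), or has $i\notin C$, in which case the space at $i$ is zero. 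Additivity and exactness of $\iota$ are immediate, since both are tested vertexwise and $\iota$ is the identity on the vertices of $C$ and zero elsewhere.

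The second step is to identify projectives. For $i\in C$ the indecomposable projective $P_{\frac{\Bbbk C}{J_C^2}}(i)$ has top $S_i$ and radical $\bigoplus_{\alpha\colon i\to j\text{ in }C}S_j$; since $C$ is closed by successors, the arrows of $Q$ starting at $i$ are exactly the arrows of $C$ starting at $i$, so $\iota\left(P_{\frac{\Bbbk C}{J_C^2}}(i)\right)=P_A(i)$. Hence $\iota\left(\frac{\Bbbk C}{J_C^2}\right)=\bigoplus_{i\in C}P_A(i)$ is a direct summand of $A$, which is precisely the hypothesis that, by Remark \ref{diagrama1}, produces the induced morphism $\bar\iota\colon K_0\left(\frac{\Bbbk C}{J_C^2}\right)\to K_0(A)$ and that underlies Theorem \ref{teorema iny}.

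The crux, and the step I expect to be the main obstacle, is showing that $\bar\iota$ is a monomorphism. Since $K_0$ is free abelian on the non-projective indecomposables, it suffices to prove that $\iota$ carries non-projective indecomposables to pairwise non-isomorphic non-projective indecomposables, and I would deduce this from full faithfulness of $\iota$. To establish full faithfulness, consider the restriction functor $R\colon\mod A\to\mod\frac{\Bbbk C}{J_C^2}$ obtained by restricting a representation of $Q$ to the vertices and arrows of $C$ (this is restriction along the corner idempotent $e=\sum_{i\in C}e_i$, and indeed $eAe\cong\frac{\Bbbk C}{J_C^2}$). One checks directly that $R\circ\iota=\id$, and that any morphism $\iota M\to\iota N$ has zero components at every vertex outside $C$, hence is reconstructed from its restriction; the two composites on $\mathrm{Hom}_A(\iota M,\iota N)$ are therefore the identity, giving $\mathrm{Hom}_A(\iota M,\iota N)\cong\mathrm{Hom}_{\frac{\Bbbk C}{J_C^2}}(M,N)$. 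In particular $\enn_A(\iota M)\cong\enn_{\frac{\Bbbk C}{J_C^2}}(M)$, so $\iota M$ is indecomposable iff $M$ is, and $\iota$ is injective on isoclasses; together with the projective identification above this also forces $\iota$ to send non-projectives to non-projectives. Consequently $\bar\iota$ maps a basis injectively into a basis, hence is a monomorphism.

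With all hypotheses of Theorem \ref{teorema iny} in hand, its conclusions give $\phi_{\frac{\Bbbk C}{J_C^2}}(M)\le\phi_A(\iota M)$ and $\psi_{\frac{\Bbbk C}{J_C^2}}(M)\le\psi_A(\iota M)$ for every $M\in\mod\frac{\Bbbk C}{J_C^2}$. Taking suprema over $\mod\frac{\Bbbk C}{J_C^2}$ then yields $\fidim\left(\frac{\Bbbk C}{J_C^2}\right)\le\fidim(A)$ and $\psidim\left(\frac{\Bbbk C}{J_C^2}\right)\le\psidim(A)$, as desired.
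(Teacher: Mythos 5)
Your proposal is correct and follows essentially the same route as the paper: both use the extension-by-zero (inclusion) functor $\iota$, check it is exact, sends projectives to projectives, and induces a monomorphism on $K_0$, and then invoke Theorem \ref{teorema iny} and take suprema. The only difference is that you spell out the verifications (well-definedness via the closed-by-successors hypothesis, identification $\iota(P_C(i))=P_A(i)$, and injectivity of $\bar\iota$ via full faithfulness) that the paper's proof leaves implicit.
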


\subsubsection{One point extension algebras}

Given two algebras $S$ and $T$ and a $S-T$-bimodule $M$, one constructs the  \textbf{upper triangular matrix algebra} $A$, which is the set of matrices:

$$ \left\{ \left(
      \begin{array}{cc}
        s & m \\
        0 & t \\
      \end{array}
    \right): s\in S, \ m \in M \hbox{ and } t\in T
 \right\}, $$
with the  usual addition and multiplication.

\begin{prop}
Given an upper triangular matrix algebra $A$ as above, we have
$$\fidim (T) \leq \fidim (A) \text{ and } \psidim (T) \leq \psidim (A).$$ 
\end{prop}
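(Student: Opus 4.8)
The plan is to realize $T$ as a quotient of $A$ and to apply Theorem \ref{teorema iny} to the associated restriction functor. Let $e = \left(\begin{smallmatrix} 1_S & 0 \\ 0 & 0 \end{smallmatrix}\right)$, so that $eAe \cong S$, $(1-e)A(1-e)\cong T$, $eA(1-e)\cong M$ and $(1-e)Ae = 0$; then $AeA$ is the ideal of matrices supported on the first row and $T \cong A/AeA$. Recall that a right $A$-module is encoded by a triple $(X,Y,f)$ with $X\in\mod S$, $Y\in\mod T$ and $f\colon X\otimes_S M\to Y$ a morphism of right $T$-modules, the indecomposable projective $A$-modules being the indecomposable summands of $eA = (S,M,\mathrm{id})$, all with nonzero $S$-component $X$, together with the modules $(0,P,0)$ for $P$ an indecomposable projective $T$-module (these are the summands of $(1-e)A$). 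I would take $F\colon\mod T\to\mod A$ to be restriction of scalars along the surjection $A\twoheadrightarrow T$; on triples this is $Y\mapsto (0,Y,0)$, since every $T$-module is annihilated by the image of $e$.

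First I would record the easy hypotheses. Restriction along a ring surjection is exact and additive, and it is fully faithful with $\enn_A(F(Y)) = \enn_T(Y)$, so $F$ preserves indecomposability and sends non-isomorphic modules to non-isomorphic modules. For the projectivity condition of Theorem \ref{teorema iny}, one computes $F(T) = (0,T,0)\cong (1-e)A$, which is a direct summand of $A$; hence $F(T)$ is a direct summand of $A^1$.

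The main point, and the step I expect to be the real obstacle, is to show that the induced morphism $\bar F\colon K_0(T)\to K_0(A)$ is a monomorphism. Since $K_0(T)$ is free abelian on the classes of non-projective indecomposable $T$-modules and $K_0(A)$ is free abelian on the classes of non-projective indecomposable $A$-modules, it is enough to prove that $F$ carries non-projective indecomposables to non-projective indecomposables. Indecomposability is preserved by the previous paragraph; for non-projectivity, suppose $Y$ is indecomposable and $F(Y) = (0,Y,0)$ were projective. Being indecomposable it would be an indecomposable projective $A$-module, and as its $S$-component vanishes it cannot be a summand of $eA$, so $F(Y)\cong (0,P,0)$ for some indecomposable projective $T$-module $P$. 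Full faithfulness then forces $Y\cong P$, contradicting that $Y$ is non-projective. Thus $\bar F$ maps a basis injectively onto part of a basis and is therefore a monomorphism; this is exactly where the explicit description of the indecomposable projective $A$-modules is needed.

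Finally, with every hypothesis verified, Theorem \ref{teorema iny} yields $\phi_T(Y)\le\phi_A(F(Y))$ and $\psi_T(Y)\le\psi_A(F(Y))$ for all $Y\in\mod T$. Taking suprema over $\mod T$ and using $\phi_A(F(Y))\le\fidim(A)$, $\psi_A(F(Y))\le\psidim(A)$ gives $\fidim(T)\le\fidim(A)$ and $\psidim(T)\le\psidim(A)$, as claimed. (The degenerate cases $S=0$ or $T=0$ are trivial, since then $A\cong T$ or the statement is vacuous.)
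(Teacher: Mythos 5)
Your proof is correct and takes essentially the same approach as the paper: the paper's entire proof is the one-line observation that the result ``is a consequence of Theorem \ref{teorema iny} because $\mod T \subset \mod A$,'' and your argument is a careful verification of exactly that embedding (restriction of scalars along $A \twoheadrightarrow A/AeA \cong T$) together with the hypotheses of that theorem. The details you supply --- in particular that $\bar{F}$ is a monomorphism because $F$ carries non-projective indecomposables to non-projective indecomposable $A$-modules --- are precisely what the paper leaves implicit.
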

\begin{proof}
Is a consequence of Theorem \ref{teorema iny} because $\mod T \subset \mod A$.
\end{proof}

In particular, when S is a division algebra we say that $A$ is the one point extension of $T$ by the bimodule $M$. If also $T$ is a bounded path algebra with quiver $Q$ and admissible ideal $I$, then $A$ is a bounded path algebra with quiver $Q'$ and admissible ideal $I'$ such that:

\begin{itemize}

\item $Q'_0 = Q_0 \cup \{v\}$,

\item $Q'_1 = Q_1 \cup \{\alpha_1, \ldots , \alpha_k \}$ where $s(\alpha_i) = v$ and $t(\alpha_i) \in Q_0$ for $1 \leq i \leq k$,

\item $I' = \langle \rho_1, \ldots , \rho_s, \rho_{s+1}, \ldots , \rho_t \rangle$ where $I = \langle \rho_1, \ldots , \rho_s \rangle$ and $s(\rho_m) = v$ for $s+1 \leq m \leq t$.

\end{itemize}

\begin{prop}\label{1.ext}

Given an algebra $A$ such that $A$ is a one point extension algebra of $T$, then $\fidim (T) \leq \fidim (A) \leq \fidim (T) + 1$ and $\psidim (T) \leq \psidim (A) \leq \psidim (T) + 1$. 

\begin{proof}

It follows from Theorem \ref{teorema iny} because $\mod T \subset \mod A$ and from Corollary \ref{bar{K}_1} because $K_1(T) \subset \mod A$.

\end{proof}

\end{prop}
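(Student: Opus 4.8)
The lower bounds are a direct consequence of the preceding proposition, since a one point extension is the special case $S=\Bbbk$ of an upper triangular matrix algebra; more explicitly, the inclusion $\iota\colon\mod T\to\mod A$ is exact and additive, sends each indecomposable projective $T$-module $P^T_i$ to the indecomposable projective $A$-module $P^A_i$ (a summand of $A$), and induces a monomorphism $\bar\iota$, so Theorem \ref{teorema iny} gives $\fidim(T)\le\fidim(A)$ and $\psidim(T)\le\psidim(A)$. The content of the statement lies in the two upper bounds, and the plan is to reduce any computation over $A$ to one over $T$ at the cost of a single syzygy.

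The geometric input is that $v$ is a source of $Q'$, so no arrow ends at $v$. Hence the radical of every indecomposable projective $A$-module is supported on $Q_0$: for an old vertex $i$ one has $P^A_i=P^T_i$ outright, while $\topp(P^A_v)=S_v$ and $\mathrm{rad}(P^A_v)$ is generated by the images of the new arrows $\alpha_i$, which all target $Q_0$. First I would record the resulting fact that, for every $X\in\mod A$, the syzygy $\Omega_A(X)\subseteq\mathrm{rad}(P(X))$ is supported on $Q_0$ and is therefore (isomorphic to) a $T$-module.

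Next I would verify that $\phi$ and $\psi$ are computed identically over $T$ and over $A$ on $T$-modules. If $Y\in\mod T$ then $\topp(Y)$ is supported on $Q_0$, so the projective cover of $Y$ over $A$ equals its projective cover over $T$; consequently $\Omega_A(Y)=\Omega_T(Y)$ and all higher syzygies coincide. Since a $T$-module supported on $Q_0$ is indecomposable (resp.\ projective) over $A$ exactly when it is so over $T$, the monomorphism $\bar\iota$ embeds $K_0(T)$ as a direct summand of $K_0(A)$ on which, by the commuting square of Remark \ref{diagrama1}, $\bar\Omega_A$ restricts to $\bar\Omega_T$. Comparing ranks of iterated images then yields $\phi_A(Y)=\phi_T(Y)$ and $\psi_A(Y)=\psi_T(Y)$ for all $Y\in\mod T$; this is precisely Corollary \ref{bar{K}_1} applied to the restriction functor $\bar K_1(A)\to\mod T$.

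To conclude, for arbitrary $X\in\mod A$ I would combine the two facts: Proposition \ref{it1}(5) gives $\phi_A(X)\le\phi_A(\Omega_A X)+1$, and since $\Omega_A X\in\mod T$ the middle step gives $\phi_A(\Omega_A X)=\phi_T(\Omega_A X)\le\fidim(T)$, so $\phi_A(X)\le\fidim(T)+1$; taking the supremum over $X$ gives $\fidim(A)\le\fidim(T)+1$. The same argument with Proposition \ref{Huard2}(6) in place of Proposition \ref{it1}(5) yields $\psidim(A)\le\psidim(T)+1$. I expect the only delicate point to be the middle step: everything rests on the single hypothesis that $v$ is a source, and one must be careful not to lose or gain a projective summand when matching projective covers, syzygies, indecomposability and non-projectivity across $\mod T$ and $\mod A$.
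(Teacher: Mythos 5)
Your proof is correct and follows essentially the same route as the paper: the lower bounds come from Theorem \ref{teorema iny} applied to the inclusion $\mod T \subset \mod A$, and the upper bounds come from the observation that the new vertex is a source, so every first syzygy of an $A$-module lies in $\mod T$, which is exactly how Corollary \ref{bar{K}_1} is meant to be applied (the paper's phrase ``$K_1(T) \subset \mod A$'' is evidently intended as $\bar{K}_1(A) \subset \mod T$). Your write-up simply makes explicit the geometric fact about the source vertex, the identification of syzygies and projective covers over $T$ and over $A$, and the final step $\phi_A(X) \leq \phi_A(\Omega_A X)+1 \leq \fidim(T)+1$, all of which the paper's one-line proof leaves implicit.
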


\section{Admisible values and gaps for $\phi$}
\subsection{Admisible values}

Given an algebra $A$ we say that a value $t\in\N$, with $t\leq\fidim(A)$, is admisible if there exists an $A$-module $M$ such that $\phi(M) = t$. If $t$ is not admisible, we say we have a gap at $t$.

We prove that for any algebra $A$ the value $1$ is always admisible. And if $A$ is of finite $\phi$-dimension $m$ then the value $m-1$ is admisible.

\begin{lema}\label{K_1 aditiva}

If $[M] \in K_1(A)$ then for $M' \subseteq M$, $[M'] \in K_1(A)$.
\begin{proof}

Let $M$ be an $A$-module such that $[M] \in K_1(A)$, then there exists $N$ such that $\Omega_A(N) = M$, this means that we have a short exact sequence:

$$ \xymatrix{0 \ar[r]& M \ar[r]& P \ar[r]& N \ar[r]& 0}$$

\noindent where $P$ is a projective module. Let $M'$ be a submodule of $M$, given that the composition $M' \hookrightarrow M \hookrightarrow P$ is a monomorphism, we can consider the next short exact sequence:

$$ \xymatrix{0 \ar[r]& M' \ar[r]& P \ar[r]& \frac{P}{M'} \ar[r]& 0} $$

\noindent following the thesis.

\end{proof}

\end{lema}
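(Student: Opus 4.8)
\section*{Proof proposal}

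The plan is to show that membership $[M]\in K_1(A)$ forces $M$ to embed into a projective module, and then to transport this embedding to an arbitrary submodule $M'$. The first step carries all the weight; the restriction step afterwards is a one-line diagram chase.

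First I would unravel what $[M]\in K_1(A)$ says at the level of modules. Since $K_1(A)=\bar{\Omega}(K_0(A))$, there is $\xi\in K_0(A)$ with $[M]=\bar{\Omega}(\xi)$; writing $\xi=[X]-[Y]$ as a difference of classes of genuine modules and using $\bar{\Omega}[Z]=[\Omega Z]$, one obtains $[M\oplus\Omega Y]=[\Omega X]$ in $K_0(A)$. Because $K_0(A)$ is free abelian on the indecomposable non-projectives (projectives being zero), this equality of effective classes means that $M\oplus\Omega Y$ and $\Omega X$ coincide up to projective summands, i.e. there are projectives $P_1,P_2$ with $M\oplus\Omega Y\oplus P_1\cong\Omega X\oplus P_2$. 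Enlarging the minimal projective presentation $0\to\Omega X\to Q\to X\to 0$ by $P_2$ exhibits $\Omega X\oplus P_2$ as the kernel of a surjection from the projective module $P=Q\oplus P_2$; hence $M\oplus\Omega Y\oplus P_1$ embeds in $P$, and therefore so does its direct summand $M$. This is the essential point, and it is where the care is needed: the hypothesis $[M]\in K_1(A)$ is a statement about \emph{classes}, and what must be extracted from it is a concrete monomorphism $M\hookrightarrow P$ into a projective. Equivalently, one records here that $M$ is, up to projective summands, a genuine first syzygy $\Omega(N)$, which is precisely the input the restriction step needs.

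With the monomorphism $M\hookrightarrow P$ in hand, $P$ projective, the rest is immediate. Given a submodule $M'\subseteq M$, the composite $M'\hookrightarrow M\hookrightarrow P$ is again a monomorphism, so there is a short exact sequence $0\to M'\to P\to P/M'\to 0$ with $P$ projective. Thus $M'$ is a first syzygy of $P/M'$ up to projective summands, whence $[M']=\bar{\Omega}[P/M']\in\bar{\Omega}(K_0(A))=K_1(A)$, which is the claim. I expect the only genuine obstacle to be the passage in the second paragraph from the group-theoretic condition $[M]\in K_1(A)$ to an actual embedding of $M$ into a projective; if one is willing to take as known that every module whose class lies in $K_1(A)$ is a syzygy up to projective summands, the whole argument collapses to the two short exact sequences above.
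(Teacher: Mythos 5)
Your proposal is correct and follows the same route as the paper's own proof: produce a monomorphism $M\hookrightarrow P$ with $P$ projective, then restrict it to $M'$ to get the exact sequence $0\to M'\to P\to P/M'\to 0$ and conclude $[M']=\bar{\Omega}[P/M']\in K_1(A)$. The only difference is that you carefully justify the first step, whereas the paper simply asserts that $[M]\in K_1(A)$ gives $M=\Omega_A(N)$ on the nose --- which is literally true only up to projective summands and difference classes, exactly the gap your decomposition $M\oplus\Omega Y\oplus P_1\cong\Omega X\oplus P_2$ closes; this extra care strengthens rather than changes the argument.
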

\begin{obs}\label{obs_global_finite}
Given an algebra $A$, if there exist $M \in \mod A$ such that $\pd (M) = m \in\N^+$ then $\phi\left(\Omega^{m-i}(M)\right) = i$, for $1\leq i < m$.
In particular, if $A$ has finite global dimension then $\phi(M) = \pd(M)$ for all $M\in\mod A$. Therefore, for all $0\leq i \leq \gd(A)$ there exists $M_i\in\mod(A)$ such that $\phi\left(M_i\right) = i$.
\end{obs}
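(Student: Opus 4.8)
The plan is to reduce every assertion to the first statement of Proposition \ref{it1}, namely that $\phi$ coincides with projective dimension on modules of finite projective dimension. The only genuinely homological ingredient needed is the standard behaviour of projective dimension under syzygies.

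First I would record that computation. If $\pd(M) = m < \infty$ and the syzygies are taken with respect to a minimal projective resolution of $M$, then $\pd(\Omega^k(M)) = m-k$ for every $0 \le k \le m$. This follows by truncating the minimal projective resolution: the tail beginning at the $k$-th term is a minimal projective resolution of $\Omega^k(M)$ of length $m-k$, and minimality ensures that no projective summands appear, so the projective dimension drops by exactly one at each step. Taking $k = m-i$ gives $\pd(\Omega^{m-i}(M)) = i$. Then, for $1 \le i < m$, since $i < \infty$, Proposition \ref{it1}(1) yields $\phi(\Omega^{m-i}(M)) = \pd(\Omega^{m-i}(M)) = i$, which is the first assertion.

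For the global dimension statement, I would note that $\gd(A) < \infty$ forces $\pd(M) \le \gd(A) < \infty$ for every $M \in \mod A$, so Proposition \ref{it1}(1) applies to all modules and gives $\phi(M) = \pd(M)$ throughout. For the final existence claim I would choose a module $M$ with $\pd(M) = \gd(A)$; such a module exists because over an Artin algebra the global dimension is attained as the maximum of the (finitely many) projective dimensions of the simple modules. Setting $M_i := \Omega^{\gd(A)-i}(M)$ for $0 \le i \le \gd(A)$, the syzygy computation gives $\pd(M_i) = i$, and hence $\phi(M_i) = i$ by the previous paragraph; the endpoint cases $i = 0$ (a projective module) and $i = \gd(A)$ (the module $M$ itself) are thereby included.

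There is essentially no hard step here: the substance is the elementary decrease of projective dimension along syzygies, after which each claim is a one-line application of Proposition \ref{it1}(1). The only point deserving mild care is that the syzygies must be taken along a minimal projective resolution, so that the projective dimension decreases by \emph{exactly} one at each stage; this is what upgrades the naive inequality to the clean identity $\pd(\Omega^{m-i}(M)) = i$ on which the whole argument rests.
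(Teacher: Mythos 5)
Your proof is correct and follows exactly the reasoning the paper leaves implicit: the remark is stated without proof, resting on the standard fact that $\pd(\Omega^k(M)) = \pd(M) - k$ (which the paper itself invokes later, before Remark \ref{sin gap}) combined with Proposition \ref{it1}(1), which is precisely your argument. The only cosmetic point is that minimality of the resolution is not actually needed for the exact drop in projective dimension (dimension shifting gives $\pd(\Omega M) = \pd(M) - 1$ for any syzygy of a module of finite positive projective dimension), though using the paper's minimal $\Omega$ as you do is of course perfectly valid.
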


\begin{teo}\label{teo.1.n-1}

Let $A$ be a finite dimensional algebra. If $\fidim (A) > 0$ then there exist $M\in \mod A$ such that $\phi (M) = 1$. In the case that $\fidim(A)$ is finite, then there exist $N\in  \mod A$ such that $\phi (N) = \fidim(A) - 1$.  

\begin{proof}

Suppose that $A$ is an algebra of infinite global dimension. In the other case, there is a module $M$ with $0 < \pd M = \gd (A) < \infty$ and the result follows from Remark \ref{obs_global_finite}.

Recall the Nakayama rule $\nu$ from Theorem \ref{Nakayama}.

\begin{itemize}

\item First we prove that there exist a module $M$ with $\phi (M) = 1$. Assume that $\pd (M) = \infty$ or $0$ for every $M \in \mod A$. By Theorem \ref{Phi = 0} $A$ is not a self-injective algebra, then applying Theorem \ref{Nakayama} we get that $\nu$ is not a permutation. In this situation there are two possible cases:\\

\begin{enumerate}

\item $\nu$ is a function but is not injective. In this case there exist simple modules $S_1$ and $S_2$ such that $\nu(S_1) = \nu(S_2)$.

Let $M_1$ and $M_2$ be the indecomposable non projective modules (the top of $M_i$  is $S_i$ for $i= 1,2$) given by:

$$\xymatrix{0 \ar[r]& \nu(S_1) \ar[r]& P_1 \ar[r] & M_1 \ar[r]& 0},$$

$$\xymatrix{0 \ar[r]& \nu(S_2) \ar[r]& P_2 \ar[r] & M_2 \ar[r]& 0},$$

therefore $\phi (M_1 \oplus M_2 ) = 1$ because $\pd M_i = \infty$ for $i =1,2$. 

\item Now, if $\nu$ is not a function, then there is a simple module $S$ such that $\soc(P(S))$ is not a simple module.

If $S_1\oplus S_2$ is a direct summand of $\soc(P(S))$ with $S_1$ and $S_2$ non isomporphic simple modules, then there exist two indecomposable projective modules $P_1$ and $P_2$ such that $S \subseteq \soc(P_1) \cap \soc (P_2)$. Therefore we can construct a pair of non-isomorphic modules, $M_1$ and $M_2$, such that $\phi (M_1 \oplus M_2 ) = 1$, similarly to the previous case.

Otherwise $\soc(P(S)) = S'^k$ with $S'$ a simple module and $k \geq 2$. Let $M_1$ and $M_2$ be the following indecomposable modules given by: 

$$\xymatrix{0 \ar[r]& S' \ar[r]& P_1 \ar[r] & M_1 \ar[r]& 0},$$

$$\xymatrix{0 \ar[r]& S'^{2} \ar[r]& P_2 \ar[r] & M_2 \ar[r]& 0}.$$

It is clear that $M_1$ and $M_2$ are indecomposable non-isomorphic modules and $\phi(M_1 \oplus M_2) = 1$.
\end{enumerate}

\item Now suppose $\fidim(A) = m < \infty$. Let $N$ be an $A$-module such that $\phi(N) = m$, then we have to prove that $\phi(\Omega(N)) = m-1$.\\

By Proposition \ref{Huard1} $\phi (\Omega (N)) \geq m-1$. Suppose that $\phi(\Omega(N)) > m-1$. Because $\phi \dim(A) = m$ we have that $\phi(\Omega(N)) = m$. Consider the decomposition  into indecomposable modules $\oplus^{s}_{i} M^{k_i}_i = \Omega(N)$. Using Lemma \ref{K_1 aditiva} we obtain that $[M_i] \in K_1$, and for each $i +1,\ldots , s$ there exists an indecomposable module $N_i \in \mod A$ such that $\Omega(N_i) = M_i$. In this case the $A$-module $N' = \oplus^{s}_{i} N_i$ would have $\phi(N') = m+1$ which is absurd.

\end{itemize}

\end{proof}

\end{teo}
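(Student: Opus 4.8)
The plan is to split off the finite global dimension case and reduce both claims to a single core situation. If $\gd(A) < \infty$, then $\phi = \pd$ on all of $\mod A$ by Proposition \ref{Huard1}(1), so $\fidim(A) = \gd(A)$, and Remark \ref{obs_global_finite} applied to a module of maximal projective dimension produces modules realising every value $0, 1, \ldots, \gd(A)$, in particular $1$ and $\fidim(A) - 1$. So I assume $\gd(A) = \infty$. The same remark also settles the value $1$ as soon as some module has projective dimension strictly between $0$ and $\infty$ (take an appropriate syzygy), so the only genuinely new situation for the first claim is that every indecomposable has projective dimension $0$ or $\infty$.

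In that situation I would use that $\fidim(A) > 0$ forces $A$ to be non-self-injective (Theorem \ref{Phi = 0}), whence the Nakayama assignment $\nu : S \mapsto \soc(\prj(S))$ is not a permutation (Theorem \ref{Nakayama}). In each way this can fail the goal is identical: exhibit two non-isomorphic indecomposables $M_1, M_2$ of infinite projective dimension whose first syzygies have linearly dependent classes in $K_0(A)$. Then $\langle \add(M_1 \oplus M_2)\rangle$ has rank $2$ while $\bar{\Omega}\langle\add(M_1\oplus M_2)\rangle$ has rank $1$, ruling out $\phi = 0$; and from the first syzygy on the group is generated by a single class $[T]$ with $\pd T = \infty$. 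Since $[X] = 0$ in $K_0(A)$ exactly when $X$ is projective, $\bar{\Omega}^n[T] \neq 0$ for every $n$, so $\bar{\Omega}$ is a monomorphism on each $\bar{\Omega}^{1+s}\langle\add(M_1\oplus M_2)\rangle$, giving $\phi(M_1\oplus M_2) = 1$. Concretely: if $\nu$ is a non-injective function, take $M_i = \prj(S_i)/\soc(\prj(S_i))$ for $S_1 \neq S_2$ with $\nu(S_1) = \nu(S_2)$; if some $\soc(\prj(S))$ is non-simple, either two indecomposable projectives share $S$ in their socles, or $\soc(\prj(S)) = S'^{k}$ with $k \geq 2$ and one builds $M_1, M_2$ with $\Omega M_1 = S'$ and $\Omega M_2 = S'^{2}$, so that $2\bar{\Omega}[M_1] = \bar{\Omega}[M_2]$.

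For the value $m - 1$ with $m := \fidim(A) < \infty$, the supremum is attained (as $\phi$ is $\N$-valued), so I pick $N_0$ with $\phi(N_0) = m$ and set $N = \Omega(N_0)$. Proposition \ref{Huard1}(5) gives $\phi(N) \geq m-1$, and $\phi(N) \leq m$ by the definition of $\fidim$, so I only need to exclude $\phi(N) = m$. Assuming $\phi(\Omega N_0) = m$, decompose $\Omega N_0 = \bigoplus_i M_i^{k_i}$ into indecomposables. By Lemma \ref{K_1 aditiva} each $[M_i] \in K_1(A)$, hence each $M_i$ is $\Omega(N_i)$ for an indecomposable $N_i$, and the $M_i$ (so the $N_i$) are pairwise non-isomorphic non-projective indecomposables. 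For $N' = \bigoplus_i N_i$ one has $\bar{\Omega}\langle\add N'\rangle = \langle[M_1], \ldots, [M_s]\rangle$ of rank $s = \rk\langle\add N'\rangle$; as $K_0(A)$ is torsion-free, $\bar{\Omega}$ is injective on $\langle\add N'\rangle$. Since $\Omega N'$ and $\Omega N_0$ have the same indecomposable summands, $\phi(\Omega N') = m$, and injectivity at the first step shifts the stabilisation index up by one to give $\phi(N') = \phi(\Omega N') + 1 = m+1 > \fidim(A)$, a contradiction; hence $\phi(N) = m-1$.

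The hard part is the second paragraph. The inequalities and rank counts are routine; the real work is making the trichotomy for the failure of $\nu$ exhaustive and verifying, in each branch, that one genuinely obtains two non-isomorphic indecomposables of infinite projective dimension whose syzygy classes collapse by exactly one dimension, with the surviving direction still of infinite projective dimension. For the final-value claim the cleanest way to package the computation is the small lemma that $\phi(X) = \phi(\Omega X) + 1$ whenever $\bar{\Omega}$ restricts to a monomorphism on $\langle\add X\rangle$ and $\phi(\Omega X) > 0$; proving this rank/stabilisation fact once removes all the index bookkeeping from the contradiction step.
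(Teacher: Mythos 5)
Your proposal is correct and takes essentially the same route as the paper: the same reduction to the case where every module has projective dimension $0$ or $\infty$, the same case analysis on the failure of the Nakayama rule $\nu$ with the same quotient-by-socle constructions of $M_1$ and $M_2$, and for the value $\fidim(A)-1$ the same contradiction argument using Lemma \ref{K_1 aditiva} to produce a module $N'$ with $\phi(N') = \fidim(A)+1$. The only difference is presentational: you spell out the rank and stabilisation bookkeeping (and the fact that the surviving syzygy class has infinite projective dimension) that the paper leaves implicit.
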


\subsection{Radical square zero algebras with gaps}

It is easy to determine the $\phi$ function in $\mod A$ for $A$ a radical square zero algebra, this allows us to construct examples of algebras with gaps
The next technical proposition helps us with a simplier way to compute gaps. 
\begin{prop}\label{nucleos}
Let $A = \frac{\Bbbk Q}{J^2}$ be a radical square zero algebra such that $\fidim(A)\geq 2$, then $\phi \left(\oplus_{i=1}^t M_i\right) \geq k \geq 2$ if and only if there exist $v \in \ker T^{l-1} \backslash \ker T^{l-2}$ such that $v \in \langle \{\bar{\Omega} ([M_i])\}_{i=1}^t \rangle$ and $l\geq k$. 
\end{prop}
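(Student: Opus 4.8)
The plan is to reduce the computation of $\phi$ to the linear algebra of $T$ on $K_1(A)$ and then read the inequality $\phi \geq k$ directly off the ranks of the iterated images of $\bar{\Omega}$.

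First I would set $G = \langle \add(\oplus_{i=1}^t M_i)\rangle = \langle [M_1],\dots,[M_t]\rangle$ (the $M_i$ being indecomposable, so that the indecomposable summands of $\oplus_i M_i$ are exactly the $M_i$ and $\langle\add\rangle$ is generated by the $[M_i]$) and recall that, by the definition of the $\phi$ function, $\phi(\oplus_i M_i) = \eta_{\bar{\Omega}}(G)$ is the first index at which the non-increasing integer sequence $r_n := \rk\, \bar{\Omega}^n(G)$ becomes constant. Applying $\bar{\Omega}$ once lands us inside $K_1(A)$: since $Q$ has no sources nor sinks, $K_1(A)$ has basis $\{[S]\}_{S\in\mathcal{S}}$, and after tensoring with $\mathbb{Q}$ the operator $\bar{\Omega}|_{K_1(A)}$ is exactly $T$ by the Remark preceding this proposition. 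Writing $H := \bar{\Omega}(G) = \langle \{\bar{\Omega}([M_i])\}_{i=1}^t\rangle \subseteq K_1(A)$, I therefore obtain $\bar{\Omega}^n(G) = T^{n-1}(H)$ for every $n\geq 1$, so that $r_n = \dim_{\mathbb{Q}} T^{n-1}(H)$ for $n\geq 1$.

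Next I would translate the inequality $\phi \geq k$. Because the ranks are non-increasing, $\phi = \eta_{\bar{\Omega}}(G) \geq k$ holds precisely when the sequence is not yet constant at index $k-1$, i.e.\ when $r_m > r_{m+1}$ for some $m \geq k-1$. For such an $m$ (note $m \geq k-1 \geq 1$ since $k\geq 2$, which is exactly why the hypothesis $k\geq 2$ is needed: it keeps us on the part of the sequence governed by $T$ rather than by the ambient $\bar{\Omega}$ on $G$), the strict drop $r_m > r_{m+1}$ says that $T$ fails to be injective on $T^{m-1}(H)$, equivalently $T^{m-1}(H) \cap \ker T \neq 0$. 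Finally I would reindex this to the nilpotency statement. A nonzero $u \in T^{m-1}(H)\cap\ker T$ is $u = T^{m-1}w$ for some $w\in H$ with $T^{m-1}w \neq 0$ and $T^m w = 0$; putting $l := m+1$ (so $l\geq k$ iff $m\geq k-1$) gives $w \in (\ker T^{l-1}\setminus \ker T^{l-2})\cap H$, which is the asserted $v$. Conversely, any $v\in H$ with $v \in \ker T^{l-1}\setminus\ker T^{l-2}$ and $l\geq k$ produces the nonzero element $T^{l-2}v \in T^{l-2}(H)\cap\ker T$, forcing the rank drop $r_{l-1} > r_l$ with $l-1 \geq k-1$ and hence $\phi\geq k$. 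Since scaling a rational vector does not change whether a power of $T$ annihilates it, I may pass freely between the integral subgroup $\langle \{\bar{\Omega}([M_i])\}\rangle$ and its $\mathbb{Q}$-span, so the two directions combine into the claimed equivalence.

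The routine ingredients are the monotonicity of the ranks and the standard fact that $T$ is injective on a subspace $U$ iff $U\cap\ker T = 0$; the only point requiring genuine care is the index bookkeeping, together with the observation that the threshold $k\geq 2$ (and the standing hypotheses $\fidim(A)\geq 2$ and $Q$ without sources nor sinks) is precisely what guarantees that the relevant ranks $r_m$ with $m\geq 1$ are computed inside $K_1(A)$, where $\bar{\Omega}$ may legitimately be replaced by $T$.
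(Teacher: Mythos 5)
Your proof is correct and follows essentially the same route as the paper's: identify $\bar{\Omega}^n\langle\add M\rangle$ with $T^{n-1}$ applied to $\langle\{\bar{\Omega}([M_i])\}\rangle$ inside $K_1(A)$, and translate the inequality $\phi\geq k$ into a rank drop of this sequence, which is equivalent to a kernel element of the stated nilpotency degree. The only difference is presentational: you make explicit the bookkeeping (indecomposability of the $M_i$, the no-sources-no-sinks hypothesis needed for $T$, and integral versus rational spans) that the paper leaves implicit when it writes the witness directly as $v=\sum_i \alpha_i\bar{\Omega}[M_i]$ with $\sum_i \alpha_i T^{l-1}\bar{\Omega}[M_i]=0$ and $\sum_i \alpha_i T^{l-2}\bar{\Omega}[M_i]\neq 0$.
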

\begin{proof}
If there exist $v \in \mathbb{Q}^n$ such that $v \in (\ker T^{k-1} \backslash \ker T^{k-2}) \cap \langle \{\bar{\Omega} ([M_i])\}_{i=1}^t \rangle$, then
$$\rk \bar{\Omega}^k \langle \{[M_i]\}_{i = 1}^t \rangle = \rk T^{k-1} \langle \{{\bar\Omega}[M_i]\}_{i = 1}^t \rangle < \rk T^{k-2} \langle \{\bar{\Omega}[M_i]\}_{i = 1}^t \rangle = \rk \bar{\Omega}^{k-1} \langle  \{[M_i]\}_{i = 1}^t \rangle,$$  
because 
\begin{itemize}
\item $\sum_{i=1}^t \alpha_iT^{k-1}\left(\bar{\Omega}[M_i]\right) = 0$ and
\item $\sum_{i=1}^t \alpha_iT^{k-2}\left(\bar{\Omega}[M_i]\right) \neq 0$,
\end{itemize}
where $v =  \sum_{i=1}^t \alpha_i(\bar{\Omega}[M_i])$. Therefore $\phi(\oplus_{i=1}^t M_i) \geq k$.

Now suppose that $\phi (M)= l \geq k \geq 2$, then there exist $u = (\alpha_1, \ldots, \alpha_t) \in \mathbb{Q}^n$ such that:

\begin{itemize}

\item $\sum_{i=1}^t \alpha_i(\bar{\Omega}^{l}[M_i]) = \sum_{i=1}^t \alpha_iT^{l-1}(\bar{\Omega}[M_i]) = 0$ and

\item $\sum_{i=1}^t \alpha_i(\bar{\Omega}^{l-1}[M_i]) = \sum_{i=1}^t \alpha_iT^{l-2}(\bar{\Omega}[M_i]) \neq 0$,

\end{itemize}
then $v =  \sum_{i=1}^t \alpha_i(\bar{\Omega}[M_i]) \in \ker T^{l-1} \backslash \ker T^{l-2}$.
\end{proof}

As a consequence of the previous result we obtain

\begin{coro}\label{coro_nucleos}
Let $A = \frac{\Bbbk Q}{J^2}$ be a radical square zero algebra with $\fidim(A)\geq 2$. If $\phi (\oplus_{i=1}^t M_i) \geq 2$, then 
$$\phi (\oplus_{i=1}^t M_i) = \sup\{k: \hbox{ there exist } v \in (\ker T^{k-1} \backslash \ker T^{k-2}) \cap \langle \{\bar{\Omega} ([M_i])\}_{i=1}^t \rangle \}$$ 
\end{coro}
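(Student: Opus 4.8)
\emph{Plan.} The statement is essentially a repackaging of Proposition \ref{nucleos}, which already contains both the required inequalities; the corollary only reorganizes that biconditional into the language of a supremum. Write $M = \bigoplus_{i=1}^t M_i$, set $p = \phi(M)$ (finite, since $\phi(M) \leq \fidim(A) \leq n$ by Proposition \ref{toto}), and let
$$S = \sup\left\{k : \text{there exists } v \in (\ker T^{k-1}\setminus\ker T^{k-2})\cap\langle\{\bar{\Omega}([M_i])\}_{i=1}^t\rangle\right\}.$$
The goal is to prove $p = S$ under the standing hypothesis $p \geq 2$, and the two inequalities $S \geq p$ and $S \leq p$ come from the two directions of Proposition \ref{nucleos}.

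First I would check that the index set defining $S$ is nonempty and consists only of integers $k \geq 2$, so that $S$ is well defined and the instances of Proposition \ref{nucleos} I intend to invoke (which require $k \geq 2$) are legitimate. For nonemptiness, apply the forward (``only if'') direction of Proposition \ref{nucleos} with threshold $k = p$: since $p \geq 2$, there exist $l \geq p$ and a vector $v \in \ker T^{l-1}\setminus\ker T^{l-2}$ lying in $\langle\{\bar{\Omega}([M_i])\}_{i=1}^t\rangle$. This $l$ belongs to the index set, so $S \geq l \geq p$. That the index set contains no $k \leq 1$ is immediate from $\ker T^{0} = 0$, which forbids a genuine (nonzero) witness at level $k=1$.

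Next I would establish the reverse bound $S \leq p$. Let $k$ be any element of the index set, witnessed by some $v \in (\ker T^{k-1}\setminus\ker T^{k-2})\cap\langle\{\bar{\Omega}([M_i])\}_{i=1}^t\rangle$. Since $k \geq 2$, the backward (``if'') direction of Proposition \ref{nucleos}, applied with depth $l = k$ and threshold $k$ (so $l \geq k$ holds trivially), yields $\phi(M) \geq k$, that is $k \leq p$. As $k$ was an arbitrary element of the index set, we conclude $S \leq p$. Combined with $S \geq p$ from the previous step this gives $p = S$; in particular the supremum is attained and equals $\phi(M)$.

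\emph{Main obstacle.} There is no genuine difficulty here beyond carefully matching the roles of the depth parameter $l$ and the threshold parameter $k$ in Proposition \ref{nucleos}: the corollary's supremum ranges over the depth $k = l$ of a witnessing vector, whereas the proposition's inequality $\phi(M) \geq k$ is stated for an arbitrary threshold bounded above by that depth. The only care required is the boundary bookkeeping, namely the finiteness of $p$, the nonemptiness of the index set, and the exclusion of $k \leq 1$, all of which become automatic once the hypothesis $p \geq 2$ is in force.
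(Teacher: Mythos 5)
Your proof is correct and takes essentially the same route as the paper, which presents this corollary as an immediate consequence of Proposition \ref{nucleos} with no further argument: the two inequalities $S \geq \phi(M)$ and $S \leq \phi(M)$ come from the forward and backward directions of that proposition, exactly as you organize them. Your extra bookkeeping (finiteness of $\phi(M)$ via Proposition \ref{toto}, nonemptiness of the index set, exclusion of $k \leq 1$) is sound and simply makes explicit what the paper leaves implicit.
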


Now, we can show an algebra with a gap.

\begin{ej}\label{ejemplo_gap}

Let $A = \frac{\Bbbk Q}{J^2}$, where $Q$ is the following quiver:

\vspace{.5cm}


$$\xymatrix{ &\cdot^2 \ar[r] & \cdot^4 \ar[r] & \cdot^6 \ar[r]& \cdot^8 \ar[ddlll] \ar[r]& \cdot^{10} \ar[r]& \cdot^{12} \ar[r] \ar@/_2pc/[lll]& \cdot^{14} \ar[dr]& \\ 
\cdot^0 \ar[dr] \ar[ur] & & & & & & & & \cdot^0\\ & \cdot_1 \ar[r]& \cdot_3 \ar[r]& \cdot_5 \ar[r]& \cdot_7 \ar[r] \ar[uulll]& \cdot_9 \ar[r]& \cdot_{11} \ar[r] \ar@/^2pc/[lll]& \cdot_{13} \ar[ur]& }$$
\vspace{.5cm}

Using Theorem 4.32 from \cite{LMM} and the following facts, we conclude that $\fidim(A) = 8$:

$$\begin{array}{ll}

 \ker T = \langle w_7 \rangle, & T(v_0) = v_1, \\

 \ker T^2 = \langle w_6-w_2 \rangle + \ker T, & T(v_1) = v_2,\\

 \ker T^3 = \langle w_5-w_1 \rangle + \ker T^2,& T(v_2) = v_3,\\

 \ker T^4 = \langle w_4 \rangle + \ker T^3, & T(v_3) = v_4,\\

 \ker T^5 = \langle w_3 \rangle + \ker T^4, & T(v_4) = v_1 + v_5,\\

 \ker T^6 = \langle w_2 \rangle + \ker T^5, & T(v_5) = v_6,\\

 \ker T^7 = \langle w_1 \rangle + \ker T^6, & T(v_6) = v_3 + v_7,\\

 											& T(v_7) = 2v_0, \end{array}$$
where $w_i = e_{2i-1}-e_{2i}$ for $i = 1, \ldots, 7$ and $v_i = e_{2i-1}+e_{2i}$ for $i = 1, \ldots, 7$.

Suppose that $M$ is an $A$-module such that $\phi (M) = 3$, then there exist $v \in (\ker T^2 \backslash \ker T) \cap \langle add M \rangle$. This means that $M = M_1 \oplus M_2$ and $v = \alpha \bar{\Omega} ([M_1]) - \beta \bar{\Omega}([M_2])$ with $\alpha, \beta \in \mathbb{Q}^{+}$. Because $v \in \ker T^2 \backslash \ker T$ then $\bar{\Omega}([M_1]) = e_{11}+e_4 + [S] $ and $\bar{\Omega}([M_2])= e_{12}+e_3 + [S']$.

On the other hand, $\Omega (N) = S_i \oplus S$ implies that $S_{i-2}$ is a direct summand of $N$ for $i = 3,4,11,12$. Therefore $S_1 \oplus S_2 \oplus S_9 \oplus S_{10}$ is a direct summand of $M$ and this is absurd, because $e_1-e_2 \in (\ker T^7 \backslash \ker T^6) \cap \langle \add M \rangle$. Concluding that the value $3$ is not admisible.
\end{ej}

In previous articles it has been proven that for some families of algebras $\fidim A = \fidim A^{op}$ (see \cite{LMM}, \cite{BMR} and \cite{LM}). The previous fact rises the question: if $A$ has a gap, does the opposite algebra have it also?
The example below shows us that the answer is no.

\begin{ej}
Let $A = \frac{\Bbbk Q}{J^2}$ where $Q$ is the following quiver,
$$  \xymatrix{ & 0 \ar[d] & & 0' \ar[d] & \\
			 & 1 \ar[dl] \ar[drrr] & & 1' \ar[dl] & \\ 
 			2 \ar[d] & & 2'\ar[d] & & 2''\ar[d] \\ 
			3 \ar[dr]& & 3' \ar[dr] \ar[dl] & & 3''\ar[dl] \\ 
			 & 0 & & 0' &  }$$
By Proposition \ref{tata} we have $\fidim(A) = \phi(\oplus_{S \in \mathcal{S}} S_{i}) + 1 = 8$. By easy computations we have that
$$\begin{array}{ccc}
\phi(S_3 \oplus \frac{P_3'}{S_0}) = 1, & \phi(S_1\oplus S_{1'}) = 3, & \phi(S_0\oplus S_{0'}) = 4,\\
\phi(S_3\oplus S_{3''}) = 5, & \phi(S_3\oplus S_{3''}) = 6, & \phi(\oplus_{S \in \mathcal{S}} S_{i}) = 7,\\
\end{array}$$
but there is no $A$-module $M$ such that $\phi(M) = 2$.
On the other hand the opposite algebra $A^{op} = \frac{\Bbbk Q^{op}}{J^2}$ has a associated quiver $Q^{op}$ as follows,
$$  \xymatrix{ & 0 \ar[dr] \ar[dl] & & 0' \ar[dl]\ar[dr] & \\
			3 \ar[d]& & 3' \ar[d] \ar[d] & & 3''\ar[d] \\ 
 			2 \ar[dr] & & 2'\ar[dr] & & 2''\ar[dlll] \\ 
			& 1 \ar[d] & & 1' \ar[d] & \\ 
			 & 0 & & 0' &  }$$
Again, by Proposition \ref{tata} $\fidim(A^{op}) = 8$. It is also clear that:
$$\begin{array}{ccc}
\phi(S_2 \oplus S_{2''}) = 1, & \phi(S_3\oplus S_{3''}) = 2, & \phi(S_0\oplus S_{0'}) = 3,\\
\phi(S_1\oplus S_{1'}) = 4, & \phi(S_2\oplus S_{2'}) = 5, & \phi(S_3\oplus S_{3'}) = 6, \\ & \phi(\oplus_{S \in \mathcal{S}} S_{i}) = 7.&\\
\end{array}$$
Hence $A^{op}$ has no gaps.
\end{ej}

\subsection{Algebras without gaps}
It is well known that $\pd \Omega(M) = \pd M - 1$ for every finite projective dimensional non projective module $M$, as a consequence we have the remark.
\begin{obs}\label{sin gap}
If $0< \pd (M) < \infty $ then there are no gaps between $0$ and $\pd (M)$.
\end{obs}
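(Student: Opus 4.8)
The plan is to exhibit, for each integer $i$ with $0\leq i\leq \pd(M)$, an explicit $A$-module whose $\phi$-value equals $i$; once every such value is realized there can be no gap in the interval $[0,\pd(M)]$. Write $m=\pd(M)$, so $0<m<\infty$ by hypothesis. The whole argument rests on two facts already available: that syzygies drop projective dimension by exactly one on non-projective modules of finite projective dimension (the statement quoted immediately before the remark, giving $\pd\Omega^{m-i}(M)=i$), and that $\phi$ agrees with $\pd$ whenever the latter is finite, namely Proposition \ref{Huard1}(1).

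Concretely, I would argue as follows. For the top endpoint $i=m$, the module $M$ itself works, since $\phi(M)=\pd(M)=m$ by Proposition \ref{Huard1}(1). For the interior values $1\leq i\leq m-1$, I would invoke Remark \ref{obs_global_finite} directly: it asserts $\phi\bigl(\Omega^{m-i}(M)\bigr)=i$ for precisely this range. For the bottom endpoint $i=0$, note that $\Omega^{m}(M)$ is projective (as $\pd M=m$), hence $\phi\bigl(\Omega^{m}(M)\bigr)=0$; any nonzero projective module does the job equally well. Assembling these three cases shows that every value in $\{0,1,\dots,m\}$ is admissible, which is exactly the assertion that there are no gaps between $0$ and $\pd(M)$.

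I do not anticipate a genuine obstacle here: the statement is an immediate bookkeeping consequence of the monotone behaviour of $\pd$ under syzygies together with the coincidence $\phi=\pd$ in the finite case. The only point requiring a moment's care is making sure the endpoints are covered, since Remark \ref{obs_global_finite} as stated only delivers the strict range $1\leq i<m$; the endpoints $i=0$ and $i=m$ must be supplied separately, as above, but both are trivial. Thus the remark follows without any additional computation.
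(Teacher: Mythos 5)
Your proposal is correct and follows essentially the same route as the paper: the paper derives the remark as an immediate consequence of $\pd\,\Omega(M)=\pd(M)-1$ for non-projective modules of finite projective dimension, combined with $\phi=\pd$ in the finite case (Proposition \ref{Huard1}(1)), which is exactly what you do via Remark \ref{obs_global_finite}. Your explicit treatment of the endpoints $i=0$ and $i=m$ is a minor (and harmless) elaboration of what the paper leaves implicit.
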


As a consequence of Remark \ref{sin gap} we get the result.

\begin{teo}

If $A$ is a finite dimensional algebra with gaps, then:
$$\fdim(A)<\lambda< \fidim(A),$$
for every gap $\lambda$. Hence the finitistic conjecture holds for $A$.
\end{teo}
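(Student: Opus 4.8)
The plan is to derive the whole statement from Remark \ref{sin gap}, which guarantees that a module of finite positive projective dimension realizes, via its syzygies, every value between $0$ and its projective dimension. The one genuinely useful observation is that an \emph{infinite} finitistic dimension would leave no room for gaps whatsoever, so the mere existence of a gap already forces $\fdim(A)$ to be finite.

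First I would establish the finitistic conjecture for $A$ by contraposition. Suppose $\fdim(A) = \infty$. Then the set $\{\pd(M) : \pd(M) < \infty\}$ is unbounded, so for every $t \in \N$ there is a module $M$ with $t \leq \pd(M) < \infty$. By Remark \ref{sin gap} every value between $0$ and $\pd(M)$ is admisible, and in particular $t$ is admisible. Since $t$ was arbitrary, $A$ has no gaps. Equivalently, if $A$ has a gap then $\fdim(A) < \infty$, which is precisely the finitistic conjecture for $A$.

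Now write $m = \fdim(A) < \infty$ and fix a gap $\lambda$. For the lower bound I would show that every element of $\{0, 1, \ldots, m\}$ is admisible, so that the non-admisible value $\lambda$ must exceed $m$. The value $0$ is realized by any projective module. If $m > 0$, then, being the supremum of a set of natural numbers, $\fdim(A)$ is attained: there is $M_0$ with $\pd(M_0) = m$, and Remark \ref{sin gap} makes each of $0, \ldots, m$ admisible. In either case $\{0,\ldots,m\}$ consists of admisible values, and since $\lambda$ is not admisible we conclude $\fdim(A) = m < \lambda$.

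For the upper bound recall that $\lambda \leq \fidim(A)$ by the definition of a gap, so it only remains to exclude equality. If $\fidim(A) = \infty$ this is immediate because $\lambda \in \N$; if $\fidim(A)$ is finite it is again the supremum of a set of natural numbers, hence attained by some module, so $\fidim(A)$ is itself admisible and therefore distinct from the non-admisible $\lambda$, giving $\lambda < \fidim(A)$. Together the two bounds yield $\fdim(A) < \lambda < \fidim(A)$. I do not expect a serious obstacle: the argument is short once Remark \ref{sin gap} is in hand, and the only point needing care is the repeated (elementary) use of the fact that a bounded set of natural numbers attains its supremum, which is exactly what lets me pass from the definitions of $\fdim$ and $\fidim$ as suprema to honest modules realizing the extreme values.
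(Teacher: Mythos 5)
Your proof is correct and follows essentially the same route as the paper, which derives the theorem directly from Remark \ref{sin gap}; your write-up simply fills in the details the paper leaves implicit (attainment of the suprema $\fdim(A)$ and $\fidim(A)$ when finite, and the contrapositive argument for finiteness of $\fdim(A)$).
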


\begin{ej}
By the previous remark we have that the following families of algebras have no gaps.
\begin{itemize}
\item Finite global dimensional algebras.
\item Gorenstein algebras (See Theorem 4.7 of \cite{LM}). 
\end{itemize}
\end{ej}
From Theorem \ref{teo.1.n-1} it follows that every algebra with $\phi$-dimension less or equal to $3$ has no gaps. In particular if $A = \frac{\Bbbk Q}{J^k}$ is a truncated path algebra we have the following example.
\begin{ej}\label{ej_menor_3}\ 
\begin{itemize}
\item In case $k$ is large enough ($k \geq n-1 \geq 2$), the truncated algebra $A$ is an example of algebra with $\phi$-dimension less or equal to $3$ (See Remark 5.4 of \cite{BMR}).
\item If $Q$ is a symmetric quiver then its $\phi$-dimension is lower or equal to $2$. (See Corollary 4.62 of \cite{LMM} and Theorem 4.17 of \cite{BMR})
\end{itemize}
\end{ej}
The following result allows us to build new examples of algebras without gaps.
\begin{prop}
If $A$ is a finite dimensional algebra without gaps and $T$ is its one point extension, then $T$ has no gaps. 
\end{prop}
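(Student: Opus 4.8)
The plan is to exploit the two structural facts the paper has already established about one point extensions: Proposition \ref{1.ext}, which gives the sandwich $\fidim(T)\leq\fidim(A)\leq\fidim(T)+1$ for a one point extension $A$ of $T$ (here the roles are reversed, so I would write $\fidim(A)\leq\fidim(T)\leq\fidim(A)+1$), and Remark \ref{sin gap}, which says that between $0$ and the finite projective dimension of any module there are no gaps. The key observation is that the only new simple module that $T$ has, compared with $A$, is the one at the extension vertex $v$, and everything in $\mod A$ sits inside $\mod T$ via the inclusion functor, which by Theorem \ref{teorema iny} preserves the value of $\phi$ on $A$-modules with finite projective dimension and does not decrease it otherwise. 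So almost every admissible value for $A$ remains admissible for $T$; the work is to show no \emph{new} gap is created in the one extra slot that the extension can add.

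\textbf{Main steps.}
First I would record that $\fidim(A)\leq\fidim(T)\leq\fidim(A)+1$, so that $T$ has at most one more admissible value to realize than $A$ does, namely a value at or near the top of the range. Second, since $A$ has no gaps, every value $t$ with $0\leq t\leq\fidim(A)$ is realized by some $M_t\in\mod A$; viewing each $M_t$ inside $\mod T$ through the inclusion functor $\iota$, Theorem \ref{teorema iny} gives $\phi_A(M_t)\leq\phi_T(\iota(M_t))$. For the values of $t$ that come from modules of finite projective dimension this is an equality by part 2 of that theorem, so those values survive verbatim. The delicate point is the possibly infinite-projective-dimension values: I would argue that the simple module $S_v$ at the extension vertex, or a suitable module built from the new arrows out of $v$, realizes the top value $\fidim(T)$, and then invoke Remark \ref{obs_global_finite} together with Remark \ref{sin gap} to fill in every value below it. Concretely, if $\fidim(T)=\fidim(A)+1$, then there is a $T$-module $W$ with $\phi_T(W)=\fidim(T)$, and the syzygy chain $\phi_T(\Omega^{i}W)$ together with the absence of gaps in $A$ covers the whole interval $[0,\fidim(T)]$.

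\textbf{The main obstacle.}
The hard part will be the top slot: ruling out a gap exactly at $\fidim(T)$ when the finitistic dimension genuinely jumps by one. In that case the value $\fidim(T)$ cannot come from lifting an $A$-module of the same $\phi$-value, so one must produce a genuinely new $T$-module attaining it, and this is where the explicit structure of the one point extension quiver $Q'$ (the vertex $v$, the new arrows $\alpha_1,\dots,\alpha_k$, and the relations $\rho_{s+1},\dots,\rho_t$ starting at $v$) has to be used. I would handle this by taking a module $N$ realizing $\fidim(A)$ in $\mod A$ and analysing the projective cover of $S_v$ in $T$: its radical is an $A$-module, and choosing $N$ as (a summand of) that radical lets one build a $T$-module whose first syzygy recovers $N$, forcing $\phi_T$ of the new module to be $\fidim(A)+1$. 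Once such a witness exists, Remark \ref{sin gap} closes the argument, since a single module attaining the top value, combined with the gap-free interval inherited from $A$, leaves no room for a gap anywhere in $[0,\fidim(T)]$.
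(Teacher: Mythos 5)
The central step of your argument does not go through as written. You transfer the admissible values of $A$ into $\mod T$ using Theorem \ref{teorema iny}, which only yields the inequality $\phi_A(M_t)\leq\phi_T(\iota(M_t))$. For a value $t$ realized by an $A$-module of \emph{infinite} projective dimension this proves nothing: $\phi_T(\iota(M_t))$ could a priori jump above $t$, and the example following Corollary \ref{iny+ind} shows that for general functors satisfying the hypotheses of Theorem \ref{teorema iny} such jumps genuinely occur ($\phi(S_1)=0$ while $\phi(F(S_1))=1$). Your patch for these values — produce a module $W$ attaining $\fidim(T)$ and sweep out the interval below it with the syzygy chain $\phi_T(\Omega^iW)$, citing Remark \ref{obs_global_finite} and Remark \ref{sin gap} — fails, because both remarks apply only to modules of \emph{finite} projective dimension. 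For infinite projective dimension, Proposition \ref{Huard1}(5) gives only $\phi(\Omega(M))\geq\phi(M)-1$, and equality is false in general; indeed, if syzygy chains always swept out all intermediate values, no algebra could have a gap at all, contradicting Example \ref{ejemplo_gap}.

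The missing ingredient is Corollary \ref{iny+ind}, which is the paper's entire proof: the inclusion $\iota:\mod A\rightarrow\mod T$ is exact, additive, sends projectives to projectives, \emph{preserves indecomposables}, and induces a monomorphism on $K_0$, so $\phi_A(M)=\phi_T(\iota(M))$ for every $A$-module $M$, with no finiteness hypothesis. This equality transfers every admissible value of $A$ verbatim to $T$. With it, your ``main obstacle'' also evaporates: by Proposition \ref{1.ext} (with the roles of the two algebras exchanged) one has $\fidim(A)\leq\fidim(T)\leq\fidim(A)+1$, and the only possibly remaining value, $\fidim(T)$ itself, is automatically admissible whenever it is finite, since $\phi$ takes values in $\N$ and a finite supremum of a set of natural numbers is attained. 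No construction of a witness from the quiver $Q'$ of the extension is needed — and the one you sketch would not force anything anyway, since $\Omega_T(X)\cong N$ only gives $\phi_T(X)\leq\phi_T(N)+1$, not equality.
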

\begin{proof}
It is a consequence of Corollary \ref{iny+ind}. 
\end{proof}

We now give conditions on the quiver $Q$ for the radical square zero algebra $\frac{\Bbbk Q}{J^2}$ to have no gaps.
\begin{obs}\label{D_4tilde}
We consider the euclidian quiver $\tilde{D}_{4}$ with the following orientation:
$$\xymatrix{ & & \cdot_0 \ar[dll] \ar[dl] \ar[dr] \ar[drr] & & \\ \cdot_1 & \cdot_2 & & \cdot_3 & \cdot_4 }$$
There is family of indecomposable modules $\{M_n\}_{n \in \N}$ with this shape:
$$\xymatrix{ & & \Bbbk^{2n} \ar[dll]_{T_1} \ar[dl]^{T_2} \ar[dr]_{T_3} \ar[drr]^{T_4} & & \\ \Bbbk^n & \Bbbk^n & & \Bbbk^n & \Bbbk^n }$$
See proof Panoromic view (3) of \cite{R} and Lemma 3.3.2.
\end{obs}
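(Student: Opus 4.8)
The plan is to produce the modules $M_n$ by hand and to verify their indecomposability through a direct computation of their endomorphism rings, rather than invoking the full classification of the regular representations of $\Bbbk\tilde{D}_4$. The conceptual reason the family exists is that $\tilde{D}_4$ is a Euclidean quiver, so $\Bbbk\tilde{D}_4$ is tame hereditary with null root $\delta = (2;1,1,1,1)$; a family of indecomposables of dimension vector $n\delta = (2n;n,n,n,n)$ sitting in a homogeneous tube is exactly what is claimed, and this is the statement attributed to \cite{R}. To keep the argument self-contained I would instead exhibit such representations explicitly.

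First I would write down the representation. Identifying $\Bbbk^{2n} = \Bbbk^n \oplus \Bbbk^n$ and fixing a scalar $\lambda \in \Bbbk \setminus \{0,1\}$, let $C$ be the single $n\times n$ Jordan block with eigenvalue $\lambda$, and define $M_n$ by $V_0 = \Bbbk^{2n}$, $V_1 = V_2 = V_3 = V_4 = \Bbbk^n$, with structure maps
$$T_1\begin{pmatrix} x \\ y \end{pmatrix} = x,\qquad T_2\begin{pmatrix} x \\ y \end{pmatrix} = y,\qquad T_3\begin{pmatrix} x \\ y \end{pmatrix} = x+y,\qquad T_4\begin{pmatrix} x \\ y \end{pmatrix} = x + Cy.$$
By construction $M_n$ has dimension vector $(2n;n,n,n,n)$ and hence the stated shape, so it remains only to check that each $M_n$ is indecomposable.

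Next I would compute $\enn(M_n)$. An endomorphism is a tuple $(f_0,f_1,f_2,f_3,f_4)$ of linear maps satisfying $f_i T_i = T_i f_0$ for $i=1,2,3,4$. Writing $f_0$ in $n\times n$ blocks as $f_0 = \left(\begin{smallmatrix} A & B \\ D & E \end{smallmatrix}\right)$, the relations from $T_1$ and $T_2$ force $B=D=0$, $f_1 = A$ and $f_2 = E$; the relation from $T_3$ then gives $A = E = f_3$, so $f_0$ is the diagonal block $\mathrm{diag}(A,A)$; and the relation from $T_4$ yields $f_4 = A$ together with $AC = CA$. Thus $\enn(M_n)$ is isomorphic to the centralizer of $C$ in $\Bbbk^{n\times n}$. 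Because $C$ is a single Jordan block it is non-derogatory, so its centralizer equals $\Bbbk[C] \cong \Bbbk[t]/\big((t-\lambda)^n\big)$, which is a local ring. A finite-dimensional module with local endomorphism ring is indecomposable, so every $M_n$ is indecomposable, completing the argument.

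The only genuinely delicate point is the choice of $C$: the four maps must be placed in sufficiently general position that all four relations collapse $\enn(M_n)$ down to the commutative local centralizer of one Jordan block. Using only three of the maps yields $\enn(M_n) \cong \Bbbk^{n\times n}$, and taking $C$ diagonalizable with distinct eigenvalues would split $M_n$ into $n$ nonisomorphic summands of dimension $\delta$; it is precisely the nilpotent-plus-scalar (single-block) structure of $C$, together with $\lambda \notin \{0,1\}$ ensuring that $T_1,T_3,T_4$ are pairwise independent, that produces a single indecomposable of dimension $n\delta$. I expect the block computation of the relations and the identification of the centralizer to be the main work, though it is routine linear algebra once the model is fixed.
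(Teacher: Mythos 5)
Your construction is correct, but it takes a genuinely different route from the paper: the paper gives no argument at all, simply citing Ringel's classification of the regular representations of the tame hereditary algebra $\Bbbk\tilde{D}_4$ (the $M_n$ are the indecomposables of dimension vector $n\delta$, where $\delta=(2;1,1,1,1)$ is the null root, lying in a homogeneous tube), whereas you exhibit an explicit matrix model and verify indecomposability by hand. Your computation is sound: the relations coming from the two projections $T_1,T_2$ kill the off-diagonal blocks of $f_0$, the relation from $T_3$ identifies the two diagonal blocks, and the relation from $T_4$ cuts $\enn(M_n)$ down to the centralizer of the single Jordan block $C$, which is $\Bbbk[C]\cong\Bbbk[t]/\bigl((t-\lambda)^n\bigr)$, a local ring, so $M_n$ is indecomposable. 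What the citation buys is brevity and the Auslander--Reiten context; what your argument buys is self-containedness and, more importantly, concrete matrices for the maps $T_1,\dots,T_4$, which the paper leaves implicit even though Lemma \ref{todadim} later refers to these $T_i$ explicitly when building modules over the separated quiver. One correction, though: the hypothesis $\lambda\in\Bbbk\setminus\{0,1\}$ is never used in your computation and should be dropped. Not only is it unnecessary (the centralizer argument works verbatim for any $\lambda$, including $\lambda=0$, i.e.\ a nilpotent single Jordan block), but as literally stated your construction is empty over $\Bbbk=\mathbb{F}_2$, where no such $\lambda$ exists, while the paper makes no assumption on the ground field. Your closing intuition that $\lambda\notin\{0,1\}$ is needed to keep $T_1,T_3,T_4$ ``in general position'' is therefore misleading: what produces a single indecomposable of dimension $n\delta$ is exactly the non-derogatory (single-block) structure of $C$, nothing more; genericity of the eigenvalue only governs in which tube the module sits, not its indecomposability.
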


\begin{obs}
If $Q$ has a vertex $v_0$ with outdegree bigger or equal to $4$ then we have an embedding of categories $\iota: \mod \Bbbk \tilde{D}_4 \rightarrow \mod \frac{\Bbbk Q}{J^2}$.
\end{obs}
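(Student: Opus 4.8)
The plan is to build $\iota$ as an ``extension by zero'' functor along a choice of four arrows leaving $v_0$. The guiding observation is that the quiver $\tilde{D}_4$, a central vertex with four arrows to four leaves, has no pair of composable arrows; hence $J^2=0$ already holds in $\Bbbk\tilde{D}_4$, so $\Bbbk\tilde{D}_4=\frac{\Bbbk\tilde{D}_4}{J^2}$ is itself radical square zero and an object of $\mod\Bbbk\tilde{D}_4$ is nothing but a tuple $(V_0;V_1,V_2,V_3,V_4;f_1,\dots,f_4)$ with $f_i\colon V_0\to V_i$. Because the target algebra $\frac{\Bbbk Q}{J^2}$ is also radical square zero, relocating such a tuple into $\mod\frac{\Bbbk Q}{J^2}$ will not manufacture any new length-two relation, as long as the four chosen arrows are not composable with one another.

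First I would fix four arrows $\beta_1,\dots,\beta_4$ with $\start(\beta_i)=v_0$, available since the outdegree of $v_0$ is at least $4$, and set $w_i=\target(\beta_i)$. Then I would define $\iota$ on objects by putting $V_0$ at $v_0$, putting $\bigoplus_{i\colon w_i=w}V_i$ at each vertex $w\ne v_0$ and $0$ at all remaining vertices, letting $\beta_i$ act as $f_i$ into its summand and every other arrow act as $0$; on morphisms $\iota$ acts by the same relocation of components. Provided no chosen arrow is a loop, the only nonzero arrow maps are the $\beta_i$ and their targets $w_i$ are sources of no $\beta_j$, so every length-two path acts as $0$ and $\iota(V)$ genuinely lies in $\mod\frac{\Bbbk Q}{J^2}$. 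Additivity and exactness are then immediate, since exactness of a complex of representations is tested vertexwise and $\iota$ merely relocates the vertex spaces and inserts zeros, preserving all kernels and cokernels.

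For the embedding property I would describe a morphism $g\colon\iota(X)\to\iota(Y)$ through its components $g_w$: these vanish off $\{v_0,w_1,\dots,w_4\}$, the relations with the $\beta_i$ say exactly that $(g_{v_0},g_{w_1},\dots,g_{w_4})$ is a morphism $X\to Y$ of $\tilde{D}_4$-representations, and the remaining arrows impose nothing because they act as $0$. When the $w_i$ are pairwise distinct and different from $v_0$ this yields a natural identification $\mathrm{Hom}(\iota X,\iota Y)\cong\mathrm{Hom}(X,Y)$, so $\iota$ is fully faithful and exact, hence an embedding that preserves indecomposability and reflects isomorphism. I stress that $\iota$ need not preserve projectives---the simple $\tilde{D}_4$-projective $S_{w_i}$ maps to $S_{w_i}$, which is projective in $\frac{\Bbbk Q}{J^2}$ only when $w_i$ is a sink of $Q$---so this is an embedding of module categories that does not fall under Theorem~\ref{teorema iny}, and it carries no claim about preserving $\phi$.

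The step that needs care---and the main obstacle---is the possibility of forced coincidences: two of the four arrows sharing a target, or a loop at $v_0$. The relocation above is still well defined, exact and faithful, but a common target $w_i=w_j$ compels us to place $V_i\oplus V_j$ at that vertex, and a single map on the sum may then have off-diagonal components annihilating $f_i$ or $f_j$; thus $\iota$ need not be full and may fail to preserve indecomposability. This is resolved precisely when $v_0$ has four distinct out-neighbours, so that $\{v_0,w_1,\dots,w_4\}$ together with $\beta_1,\dots,\beta_4$ is a genuine copy of the star $\tilde{D}_4$ inside $Q$; that is the case intended by the hypothesis, and it is exactly what is needed to carry the indecomposable family $\{M_n\}_{n\in\N}$ of Remark~\ref{D_4tilde} over to an infinite family of pairwise non-isomorphic indecomposable $\frac{\Bbbk Q}{J^2}$-modules.
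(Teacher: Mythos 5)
Your core construction is correct and is, in substance, the natural argument for this remark (which the paper itself states without proof): since $\tilde{D}_4$ has no pair of composable arrows, the relocated representation automatically satisfies the radical--square--zero relations; and since every arrow of $Q$ other than the four chosen ones acts as zero on both $\iota X$ and $\iota Y$, the commutation constraints attached to those arrows are vacuous, so $\mathrm{Hom}(\iota X,\iota Y)\cong\mathrm{Hom}(X,Y)$, making $\iota$ a fully faithful exact functor that preserves indecomposability and reflects isomorphisms. This is exactly the transport of the family $\{M_n\}$ of Remark~\ref{D_4tilde} that the paper needs, and your observation that $\iota$ need not preserve projectives (so Theorem~\ref{teorema iny} does not apply) is accurate.

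There is, however, a genuine gap between what you prove and what the statement asserts. The hypothesis is that $v_0$ has \emph{outdegree} at least $4$, i.e. at least four arrows leave $v_0$; it does not say that $v_0$ has four out-neighbours that are pairwise distinct and different from $v_0$. Loops at $v_0$ and parallel arrows are allowed, and in precisely those cases your functor fails to be full (as you concede): for instance, with four parallel arrows $v_0\rightarrow w$ the relocation places $V_1\oplus V_2\oplus V_3\oplus V_4$ at $w$, and the off-diagonal components of an endomorphism of that sum produce morphisms of $\iota X$ not coming from morphisms of $X$, so indecomposability need not be preserved either. Declaring the distinct-neighbour situation to be ``the case intended by the hypothesis'' is a reinterpretation of the statement, not a proof of it. To actually close the gap one needs further ideas: loops can be eliminated by first passing to the separated quiver $\Gamma_A$ (the remark that follows in the paper), since a loop at $v_0$ becomes an arrow from $v_0$ to the sink $v_0'$ there; but repeated targets cannot be repaired by any relocation, and one would instead need, say, a fully faithful exact embedding of $\mod\Bbbk\tilde{D}_4$ into the module category of the $r$-Kronecker quiver with $r\geq 3$ --- a strict-wildness statement of a different order of difficulty. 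To be fair, the paper is exposed to the same issue (parallel arrows are exactly the subject of its Lemma~\ref{flechadoble}, whose proof is also omitted), so the precise statement your argument establishes is: if $v_0$ has at least four distinct out-neighbours, all different from $v_0$, then the embedding exists.
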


\begin{nota}
Given a radical square zero algebra $A$, we denote by $\Gamma_A$ the separated quiver of $A$ (see chapter X.2 \cite{ARS}).
\end{nota}

\begin{obs}
Let $Q$ be a quiver and $A = \frac{\Bbbk Q}{J^2}$ its associated radical square zero algebra. Then there exist a functor (See Lemma 2.1 chapter X.2 \cite{ARS})
$$F : \mod A \rightarrow \mod \Bbbk \Gamma_A,$$
such that the following properties holds:
\begin{itemize}
\item $M$ is an indecomposable $\frac{\Bbbk Q}{J^2}$-module if and only if $F(M)$ is an indecomposable $\Bbbk \Gamma_A$-module,
\item $M$ and $N$ are isomorphic if and only if $F(M)$ and $F(N)$ are isomorphic,
\item $\dim_{\Bbbk} M = \dim_{\Bbbk} F(M)$.
\end{itemize}
If in addition there is a vertex $v_0$ with outdegree bigger or equal to $4$ then there exists a family of $A$-modules $\{N_n\}_{n\in \N}$ such that $F(N_n) = M_n$.
\end{obs}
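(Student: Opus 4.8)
The first assertion—the existence of $F$ together with its three listed properties—is exactly the content of Lemma 2.1 in Chapter X.2 of \cite{ARS}, so the plan for that part is simply to invoke it; what I really need for the sequel is its explicit description. Recall that $\Gamma_A$ carries two copies $i,i'$ of each vertex $i$ of $Q$, with one arrow $i\to j'$ for every arrow $i\to j$ of $Q$, and that $F$ sends an $A$-module $M$ to the representation placing $\topp(M)_i=(M/JM)_i$ at the unprimed vertex $i$ and $(JM)_j$ at the primed vertex $j'$, the arrow $i\to j'$ acting by the map $\topp(M)_i\to (JM)_j$ induced by the corresponding arrow of $Q$ (well defined since $J^2=0$). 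In particular $\dim_{\Bbbk}F(M)=\dim_{\Bbbk}\topp(M)+\dim_{\Bbbk}JM=\dim_{\Bbbk}M$, and the statements on indecomposability and isomorphism are the ones recorded in \cite{ARS}.

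For the second assertion the plan is to produce $N_n$ as the image of $M_n$ under the embedding $\iota\colon \mod\Bbbk\tilde D_4\to\mod A$ of the previous Remark and then to verify $F(N_n)=M_n$ directly. I would fix the four arrows $\alpha_1,\dots,\alpha_4$ leaving $v_0$, with $\target(\alpha_i)=w_i$; in $\Gamma_A$ these give precisely the star $\tilde D_4$ whose center is the unprimed vertex $v_0$ and whose leaves are the primed vertices $w_1',\dots,w_4'$. Given $M_n$, with $\Bbbk^{2n}$ at the center and maps $T_i\colon\Bbbk^{2n}\to\Bbbk^n$, I define $N_n$ to be the $A$-module supported on $\{v_0,w_1,\dots,w_4\}$ with $\Bbbk^{2n}$ at $v_0$, $\Bbbk^n$ at each $w_i$, the arrow $\alpha_i$ acting as $T_i$, and every other arrow acting as $0$. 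This is a genuine $A$-module because every path of length $2$ acts as zero on it, so the relations $J^2=0$ are respected; moreover $N_n=\iota(M_n)$.

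It then remains to compute $F(N_n)$. No arrow of $Q$ acts into $v_0$ on $N_n$, so $(JN_n)_{v_0}=0$ and $\topp(N_n)_{v_0}=\Bbbk^{2n}$; and since the structure maps $T_i$ of the family $\{M_n\}$ from \cite{R} are surjective, $(JN_n)_{w_i}=\I(T_i)=\Bbbk^n$ while $\topp(N_n)_{w_i}=\Bbbk^n/\I(T_i)=0$. Hence $F(N_n)$ is supported exactly on the $\tilde D_4$-star, with $\Bbbk^{2n}$ at the center $v_0$, $\Bbbk^n$ at each leaf $w_i'$, and the arrow $v_0\to w_i'$ acting by the induced map, which is $T_i$ itself. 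Thus $F(N_n)=M_n$; equivalently $F\circ\iota$ is the identity on $\mod\Bbbk\tilde D_4$ regarded inside $\mod\Bbbk\Gamma_A$ by extension by zero.

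The only delicate point—and the step I would check most carefully—is this exact identity $F(N_n)=M_n$, which hinges on the surjectivity of the maps $T_i$: if some $T_i$ failed to be surjective, its cokernel would contribute to $\topp(N_n)$ at $w_i$ and $F(N_n)$ would acquire a spurious component at the unprimed vertex $w_i$. For $n=1$ each $T_i\colon\Bbbk^2\to\Bbbk$ is a nonzero functional, hence automatically surjective, and for general $n$ I would read off surjectivity from the explicit form of the family in \cite{R}. With that secured the computation above is routine, and the three properties of $F$ then yield at once that the $N_n$ are indecomposable, pairwise non-isomorphic, and of dimension $\dim_{\Bbbk}M_n=6n$, which is the use to which the family is put.
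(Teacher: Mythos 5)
Your proposal is correct and takes essentially the same approach as the paper: the first part is precisely the citation to Lemma 2.1 of Chapter X.2 in \cite{ARS}, and the second part is the extension-by-zero construction $N_n=\iota(M_n)$ through the embedding of the preceding remark, followed by the check that $F(N_n)=M_n$. The paper records this as a remark with no further proof, and your verification—in particular that surjectivity of the maps $T_i$ (which in fact follows for all $n$ from indecomposability of $M_n$, since a non-surjective $T_i$ would split off a simple projective summand at the leaf) is what prevents $F(N_n)$ from acquiring components at the unprimed vertices $w_i$—supplies exactly the computation the remark leaves implicit.
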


\begin{lema}\label{todadim}
Let $Q$ be a quiver where every vertex has outdegree bigger or equal to $6$ and for two different vertices of $Q_0$, $v$ and $w$, there exist $4$ different arrows $\alpha_1, \alpha_2, \beta_1, \beta_2$ such that: 
\begin{itemize}

\item $s(\alpha_1) = s(\alpha_2) = v$, 

\item $s(\beta_1) = s(\beta_2) = w$,

\item $t(\alpha_1) = t(\beta_1)$,

\item $t(\alpha_2) = t(\beta_2)$.

\end{itemize}
Consider $A = \frac{\Bbbk Q}{J^2}$, then for every $v = (v_1,\ldots, v_n) \in \mathbb{N}^n$ there exists an indecomposable $A$-module $M_v$ and $l \in \mathbb{Z}^+$ such that $\bar{\Omega}(l[M_v]) = \oplus \alpha_i [S_i]$ and $T(v) = \oplus_i^n \alpha_i e_i$. In particular if $v_i > 0$ for at least two indices,  then $l = 1$.

\begin{proof}
If $v$ has only one entry $v_{i_0}$ non-null, choose $M_v = S_{i_0}$ and $l=v_{i_0}$. If there are more than one entry no null, suppose $v = \sum n_ie_i$ where $v_1, \ldots, v_s$ are the ordered vertices with $s \geq 2$ and non-null cofficients $\{n_i\}$. Consider the following sets of arrows
$$A =  \{\alpha_1, \ldots, \alpha_{s-1}, \beta_2, \ldots, \beta_s\},$$
such that:
\begin{itemize}

\item $s(\alpha_1) = v_1$ and $s(\beta_s) = v_s$,

\item $s(\alpha_i) = s(\beta_i) = v_i$ and $\alpha_i \neq \beta_i$ for $i= 2, \ldots, s-1$,

\item $t(\alpha_i) = t(\beta_{i+1}) = w_i$ for $i = 1, \ldots, s-1$.

\end{itemize}

For $i= 1, \ldots, s$, we define $A_i = \{\alpha_{i,j}\}_{j = 1, 2, 3, 4}$, such that
\begin{itemize}  
\item $\alpha_{i,j} \notin A$ for $i= 1, \ldots, s$ and $j = 1, 2, 3, 4$, 
\item $s(\alpha_{i,j}) = v_i$ for $i= 1, \ldots, s$ and $j = 1, 2, 3, 4$, 
\item $\alpha_{i,j} \neq \alpha_{i,j'}$ for $i= 1, \ldots, s$ and $j \neq j'$.
\end{itemize}

Let $B$ be the set
$$B = \{ \gamma \in Q_1 \text{ : } s(\gamma) \in \{v_1,\ldots, v_s\} \text{
and } \gamma \notin (A \cup A_1 \cup \ldots \cup A_s)\}.$$

Let $G = (G_0, G_1)$ be the following quiver:

\begin{itemize}
\item $G_0 = \{ v_i \text{ : } i = 1,\ldots, s \} \cup \{ v_{\gamma} \text{ : } \gamma \in B  \cup A_1\cup \ldots \cup A_s \} \cup \{ w_{i} \text{ : } i = 1, \ldots, s-1\},$
\item $G_1$ is the union of the sets
\begin{itemize}
\item[*] $\{\delta_{\gamma} \text{ : } s(\delta_{\gamma}) = s(\gamma) \text{ and } t(\delta_{\gamma}) = v_{\gamma}  \text{ where } \gamma \in B  \cup A_1\cup \ldots \cup A_s\},$
\item[*] $\{ \bar{ \alpha_i} \text{ : }s(\bar{ \alpha_i}) = v_i \text{ and } t(\bar{ \alpha_i}) = w_{i} \text{ for } i =1,\ldots, s-1\},$
\item[*] $\{ \bar{ \beta_j} \text{ : }s(\bar{ \beta_j}) = v_{j} \text{ and } t(\bar{ \beta_j}) = w_{j-1}\text{ for } j =2, \ldots, s\}.$
\end{itemize}
\end{itemize}
Then there exist an embedding functor $H:\mod \Bbbk G \rightarrow \mod \Bbbk \Gamma_Q$ such that 
\begin{itemize}
\item $H(C)$ is indecomposable in $\mod \Bbbk \Gamma_Q$ if and only if $C$ is indecomposable in $\mod \Bbbk G$,
\item $\dim_{\Bbbk} H(C) = \dim_{\Bbbk} C$.
\end{itemize}

Now, consider $M = (M_v,T_{\alpha})_{v \in G_0, \alpha \in G_1}$ the following indecomposable $G$-module:
\begin{itemize}
\item $M_v = \left\{\begin{array}{ll}
\Bbbk^{2n_i} & \text{ if } v=v_i \text{ for } i =1,\ldots, s, \\
\Bbbk^{n_i}  & \text{ if } v = v_{\gamma} \text{ for } \gamma \in  B  \cup A_1\cup \ldots \cup A_s,\\
\Bbbk^{n_i+ n_{i+1}} & \text{ if } v = w_i \text{ for } i = 1,\ldots, s-1,\\
\end{array} \right.$ 
\\
\item $T_{\alpha} = \left\{\begin{array}{ll} 
T_i\ (\text{see } \ref{D_4tilde} )& \text{ if } \alpha = \delta_{\gamma} \text{ with } \gamma = \alpha_{i,j} \in A_1\cup \ldots \cup A_s,\\
\text{an epimorphism } & \text{ if } \alpha = \delta_{\gamma} \text{ with } \gamma \in B,\\
\hspace{-5pt}
\begin{array}{l}
\text{an epimorphism}\\
\text{(monomorphism)}
\end{array}& \text{ if } \alpha = \bar{ \alpha_i} \text{ and } n_i \leq n_{i+1}(n_{i+1} \leq n_{i}),\\
\hspace{-5pt}
\begin{array}{l}
\text{an epimorphism}\\
\text{(monomorphism)}
\end{array} & \text{ if } \alpha = \bar{ \beta_j} \text{ and } n_{j} \leq n_{j-1}(n_{j-1} \leq n_{j}).
\end{array}\right.$
\end{itemize}
Then $HF(M)$ verifies the thesis.
\end{proof}
\end{lema}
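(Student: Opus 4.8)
The plan is to reduce the lemma to the construction of a single indecomposable $A$-module with prescribed top and radical, and then read off $\bar{\Omega}$ from the radical-square-zero syzygy calculus. For $A = \frac{\Bbbk Q}{J^2}$ every syzygy is semisimple; moreover, for any $M \in \mod A$ with projective cover $P(M)$ the modules $\Omega(M)$, $\operatorname{rad} P(M)$ and $\operatorname{rad} M$ are all semisimple, so the short exact sequence $0 \to \Omega(M) \to \operatorname{rad} P(M) \to \operatorname{rad} M \to 0$ splits and gives $[\Omega(M)] = [\operatorname{rad} P(M)] - [\operatorname{rad} M]$ in $K_0(A)$. Since $[\operatorname{rad} P(M)] = \bar{\Omega}([\topp M])$, it suffices to produce an indecomposable $M_v$ whose top has class $2v = \bigoplus_i S_{v_i}^{2 n_i}$ and whose radical has class exactly $T(v)$: then $\bar{\Omega}([M_v]) = \bar{\Omega}([\topp M_v]) - [\operatorname{rad} M_v] = 2\,T(v) - T(v) = T(v)$, and the statement holds with $l = 1$. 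The single-entry case needs no work, since for $v = n_{i_0} e_{i_0}$ I may take $M_v = S_{i_0}$ and $l = n_{i_0}$, whence $\bar{\Omega}(l[S_{i_0}]) = n_{i_0} T(e_{i_0}) = T(v)$.

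To build $M_v$ when at least two entries are non-zero I would not work in $\mod A$ directly but transport the problem to the hereditary algebra $\Bbbk \Gamma_A$ through the separated-quiver functor $F : \mod A \to \mod \Bbbk \Gamma_A$ of the preceding remark, which preserves indecomposability, isomorphism classes and $\Bbbk$-dimension, and under which the unprimed vertex $i$ records the top at $i$ while the primed vertices record the radical. Thus it is enough to exhibit an indecomposable $\Bbbk \Gamma_Q$-module of dimension $2 n_i$ at each unprimed vertex $v_i$ and with primed dimensions realizing $T(v)$, and to check that it lies in the essential image of $F$. I would obtain it from an auxiliary quiver $G$ equipped with an exact, dimension- and indecomposability-preserving embedding $H : \mod \Bbbk G \to \mod \Bbbk \Gamma_Q$, so that constructing one indecomposable $\Bbbk G$-module $M$ of the correct dimension vector and setting $F(M_v) = H(M)$ finishes the argument.

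The module $M$ is assembled from two ingredients. At each vertex $v_i$ I place a copy of the $\tilde{D}_4$-indecomposable $M_{n_i}$ of Remark \ref{D_4tilde}; the hypothesis that every vertex has outdegree at least $6$ supplies four arrows out of $v_i$ to carry the four maps $T_1, \dots, T_4$, and it is precisely these four maps that force the central dimension to be $2 n_i$ while keeping the block indecomposable. To fuse the $s$ blocks into a single indecomposable module I glue consecutive blocks along common targets: the four-arrow hypothesis applied to the pair $(v_i, v_{i+1})$ provides arrows $\alpha_i, \beta_{i+1}$ sharing an endpoint $w_i$, and I choose the maps $\bar{\alpha}_i, \bar{\beta}_{i+1}$ into $\Bbbk^{n_i + n_{i+1}}$ as an epimorphism/monomorphism pair so that no idempotent endomorphism can separate the two blocks. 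Any remaining arrow out of a $v_i$ is absorbed by a surjection onto a one-dimensional space, contributing the missing simple summands of the radical. A summation then shows that the radical class is distributed over the primed vertices exactly as $T(v)$.

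The step I expect to be the main obstacle is the indecomposability of $M$. I would prove it by showing that an idempotent $e \in \operatorname{End}(M)$ must act as a scalar on each $\tilde{D}_4$ block, because those blocks are indecomposable, and that the epimorphism/monomorphism gluing at each vertex $w_i$ forces the scalars on adjacent blocks to coincide; connectedness of the gluing pattern then makes $e$ a global scalar, so $M$ is indecomposable. A secondary point requiring care is the exact matching of the radical class with $T(v)$, which rests on the telescoping identity between the dimensions $n_i + n_{i+1}$ contributed by the gluing vertices and the counts of gluing arrows leaving each $v_i$; I would verify this by a direct sum over $i$.
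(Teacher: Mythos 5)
Your overall route is the same as the paper's: pass to the separated quiver via the functor $F$ of ARS X.2, build an auxiliary quiver whose module category embeds in $\mod \Bbbk\Gamma_Q$, place the $\tilde{D}_4$-family module of Remark \ref{D_4tilde} (with its four maps $T_1,\dots,T_4$, which is what forces outdegree $\geq 6$) at each support vertex $v_i$, and fuse consecutive blocks by an epimorphism/monomorphism pair at a common target $w_i$ supplied by the four-arrow hypothesis. Your explicit $K_0$ bookkeeping --- $\operatorname{rad}P(M)\cong \Omega(M)\oplus\operatorname{rad}M$ when $J^2=0$, hence $\bar\Omega([M_v]) = 2T(v)-[\operatorname{rad}M_v]$ once $[\topp M_v]=2v$ --- is a clean way to make precise what the paper leaves implicit, and it correctly isolates the one thing that must be checked: $[\operatorname{rad}M_v]=T(v)$ on the nose.

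That check, however, fails with your choice for the leftover arrows. You absorb ``any remaining arrow out of $v_i$'' by a surjection onto a \emph{one-dimensional} space, so such an arrow $\gamma$ contributes a single copy of $S_{t(\gamma)}$ to the radical, whereas $T(v)$ counts $n_i$ copies of $e_{t(\gamma)}$ for every arrow with source $v_i$. By your own formula this yields $\bar\Omega([M_v]) = T(v) + \sum_{\gamma}(n_i-1)\,e_{t(\gamma)}$, the sum running over leftover arrows $\gamma$ with $s(\gamma)=v_i$, which differs from $T(v)$ as soon as some $n_i\geq 2$ and $v_i$ has a leftover arrow. Leftover arrows are unavoidable: the end vertices $v_1,v_s$ of the chain use only five arrows (one gluing arrow plus the four $\tilde{D}_4$ arrows) while their outdegree is at least six. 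The repair is exactly the paper's choice: a leftover arrow $\gamma$ with $s(\gamma)=v_i$ must map onto a space of dimension $n_i$ via an epimorphism $\Bbbk^{2n_i}\twoheadrightarrow\Bbbk^{n_i}$; then the summation you propose does close up, since each $\tilde{D}_4$ arrow and each leftover arrow out of $v_i$ contributes $n_i$, and each gluing vertex $w_i$ contributes $n_i+n_{i+1}$, matching the two arrows $\alpha_i,\beta_{i+1}$ that $T$ counts separately. (Your indecomposability sketch --- idempotents act as scalars on the $\tilde{D}_4$ blocks and the epi/mono gluing propagates the scalar --- is at the same level of detail as the paper, which simply asserts the module is indecomposable; the same goes for both texts' unproved claim that the constructed $\Bbbk\Gamma_Q$-module lies in the essential image of $F$.)
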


With similar computations as the previous theorem, we can prove.

\begin{lema}\label{flechadoble}
Let $Q$ be a quiver where every vertex has outdegree bigger or equal to $4$ with a double arrow and for two different vertices of $Q_0$, $v$ and $w$, there exist $4$ different arrows $\alpha_1, \alpha_2, \beta_1, \beta_2$ such that: 
\begin{itemize}
\item $s(\alpha_1) = s(\alpha_2) = v$, 
\item $s(\beta_1) = s(\beta_2) = w$,
\item $t(\alpha_1) = t(\beta_1)$,
\item $t(\alpha_2) = t(\beta_2)$.
\end{itemize}
Consider $A = \frac{\Bbbk Q}{J^2}$, then for every $v = (v_1,\ldots, v_n) \in \mathbb{N}^n$ with $v_i\geq 0$ there exists an $A$-module $M_v$ and $l\in\Z^+$ such that $\bar{\Omega}(l[M_v]) = \oplus \alpha_i [S_i]$ and $T(v) = (\alpha_1, \ldots, \alpha_n) $. In particular if $v_i > 0$ for at least two indices,  then $l = 1$.
\end{lema}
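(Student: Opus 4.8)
The plan is to run the same scheme as in Lemma \ref{todadim}, altering only the local gadget placed at each chosen vertex so that it is supported on four outgoing arrows instead of six. First I would dispose of the degenerate case: if $v$ has a single non-zero entry $v_{i_0}$, take $M_v = S_{i_0}$ and $l = v_{i_0}$, since $\bar{\Omega}(v_{i_0}[S_{i_0}]) = v_{i_0}[\Omega(S_{i_0})] = T(v_{i_0}e_{i_0})$ by Remark \ref{fgd}. So I may assume $v = \sum_{i=1}^{s} n_i e_{v_i}$ with $s\geq 2$ ordered non-zero coordinates, and the goal becomes the construction of a single \emph{indecomposable} $A$-module $M_v$ whose top has multiplicity $2n_i$ at $S_{v_i}$ and whose syzygy class is $\bigoplus_i \alpha_i[S_i]$ with $(\alpha_i) = T(v)$, which forces $l=1$.

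The one genuinely new ingredient is the replacement for the $\tilde{D}_4$ family of Remark \ref{D_4tilde}. In Lemma \ref{todadim} each vertex $v_i$ spends four free arrows (the set $A_i$) to embed the family $\{M_n\}$ of central dimension $2n$ with four arms of dimension $n$; together with the two chain arrows $\bar{\alpha}_i,\bar{\beta}_i$ this forces outdegree $\geq 6$. Here I would instead exploit the double arrow $v_i \rightrightarrows u_i$: on the separated quiver $\Gamma_Q$ the two parallel arrows form a Kronecker pair, which carries a balanced (regular) indecomposable that plays the role of two of the four $\tilde{D}_4$ arms. Thus the local configuration becomes a source carrying one double edge and two single edges (the two chain arrows), with the dimensions along the double edge chosen so that the radical dimensions reproduce $T(v)$ exactly as in Lemma \ref{todadim}; the relevant local dimension vector has indefinite Tits form (it takes a negative value there), so the indecomposable exists and its maps can be written down explicitly just as the $T_1,\dots,T_4$ of Remark \ref{D_4tilde}. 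This gadget uses only the two arrows of the double edge plus the two chain arrows, which is why outdegree $\geq 4$ now suffices.

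With the local gadget in hand I would assemble the global module precisely as before: keep the auxiliary quiver $G$, the chain $\{\alpha_1,\dots,\alpha_{s-1},\beta_2,\dots,\beta_s\}$ routed through common targets $w_i$ (whose availability is guaranteed by the square hypothesis on $v,w$ together with the high outdegree), and the remaining outgoing arrows collected with epimorphisms. Choosing the chain maps $\bar{\alpha}_i,\bar{\beta}_i$ to be epimorphisms or monomorphisms according to the comparison of $n_i$ with $n_{i\pm 1}$ glues the local gadgets into a single indecomposable $G$-module, and applying the embedding $H:\mod \Bbbk G \to \mod \Bbbk\Gamma_Q$ and then passing back through the separated-quiver functor $F$, exactly as in the last line of the proof of Lemma \ref{todadim}, produces the desired indecomposable $A$-module $M_v$. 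The syzygy-class identity $\bar{\Omega}(l[M_v]) = \bigoplus_i \alpha_i[S_i]$ is then the same rank computation as in Lemma \ref{todadim}, since the multiplicity $2n_i$ of the top and the arm dimensions are unchanged and the doubled arm is already recorded in the coefficient $2$ of $T(e_{v_i})$.

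The hard part, and the only place where real work hides behind ``similar computations'', is checking that the double-edge gadget behaves like the $\tilde{D}_4$ gadget: that the Kronecker pair coming from the double arrow genuinely substitutes for two independent arms, yielding an indecomposable whose generic maps neither split off a summand nor disturb the bookkeeping of the syzygy, and that after gluing along the chain no summand can be split off globally. Once this local indecomposability statement is secured, the embedding $H$, the functor $F$, and the rank computation carry over verbatim from Lemma \ref{todadim}.
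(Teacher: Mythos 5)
Your proposal is correct and is essentially the paper's own argument: the paper proves this lemma only by asserting ``similar computations as the previous theorem,'' i.e.\ by rerunning the construction of Lemma \ref{todadim}, which is exactly what you do. Your one substantive modification --- replacing the $\tilde{D}_4$ gadget of Remark \ref{D_4tilde} by a balanced regular Kronecker representation on the double arrow (explicitly, dimension vector $(2n_i,2n_i)$ with maps $(\mathrm{id}, J_{2n_i}(0))$, whose commutant is local) --- is the intended adaptation, and your bookkeeping is right: the doubled arm of dimension $2n_i$ exactly accounts for the coefficient $2$ that the double arrow contributes to $T(e_{v_i})$, so the chain gluing, the embedding into $\mod \Bbbk\Gamma_Q$, and the rank computation carry over verbatim.
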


As a consequence of the two previous lemmas and the Corollary \ref{coro_nucleos} we have the following theorem.
\begin{teo}
If $Q$ is a quiver in the hypothesis of Lemma \ref{todadim} or Lemma \ref{flechadoble} then $A = \frac{\Bbbk Q}{J^2}$ has no gaps.
\end{teo}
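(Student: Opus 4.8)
The plan is to exhibit, for every value $t$ with $0 \le t \le \fidim(A)$, an $A$-module whose $\phi$ equals $t$, using the two lemmas to realise modules with prescribed syzygy classes and Corollary \ref{coro_nucleos} to read off $\phi$ from the filtration $\ker T \subseteq \ker T^2 \subseteq \cdots$ of $T$. Write $N$ for the nilpotency index of $T$, i.e. the least $N$ with $\ker T^N = \ker T^{N+1}$, so that the filtration is strict exactly up to step $N$. Since every vertex has positive outdegree, $Q$ has no sinks, every simple has infinite projective dimension, and hence $\phi(S)=0$ for $S \in \mathcal{S}(A)$, so the value $0$ is admissible; the value $1$ is admissible by Theorem \ref{teo.1.n-1}; and $\fidim(A)$ is admissible because the finite supremum defining it is attained. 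Under the implicit no-source/no-sink convention needed for $T$ one has $\mathcal{S}_D = \mathcal{S}(A)$, so Proposition \ref{tata}, together with the identification of $\bar{\Omega}|_{K_1(A)}$ with $T$, gives $\fidim(A) = \phi(\oplus_{S \in \mathcal{S}(A)} S) + 1 = N+1$. It therefore remains to realise each intermediate value $k$ with $2 \le k \le N$.

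Fix such a $k$ and choose an integer vector $u \in \ker T^{k} \setminus \ker T^{k-1}$, which exists since the filtration is strict up to $N$. Write $u = u^{+} - u^{-}$ with $u^{+}, u^{-} \in \mathbb{N}^n$ its positive and negative parts, and set $v_1 = u^{+} + c\mathbf{1}$ and $v_2 = u^{-} + c\mathbf{1}$ for an integer $c \ge 1$, where $\mathbf{1} = (1,\dots,1)$. Both $v_j$ lie in $\mathbb{N}^n$ with all coordinates positive, hence (as $n \ge 2$ by the two-vertex hypothesis) each has at least two nonzero entries, so Lemma \ref{todadim} (respectively Lemma \ref{flechadoble}) yields indecomposable modules $M_{v_1}, M_{v_2}$ with $\bar{\Omega}([M_{v_j}]) = T(v_j)$. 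Since $v_1 - v_2 = u$, the subgroup $G = \langle \bar{\Omega}([M_{v_1}]), \bar{\Omega}([M_{v_2}])\rangle$ contains $T(v_1) - T(v_2) = T(u)$, and because $u \in \ker T^{k}\setminus \ker T^{k-1}$ we have $T(u) \in \ker T^{k-1}\setminus \ker T^{k-2}$. Applying Proposition \ref{nucleos} to $M = M_{v_1}\oplus M_{v_2}$ then gives $\phi(M) \ge k \ge 2$.

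For the reverse inequality I would invoke Corollary \ref{coro_nucleos}, by which $\phi(M) = \sup\{p : (\ker T^{p-1}\setminus \ker T^{p-2}) \cap G \neq \emptyset\}$. Writing $V = \langle v_1, v_2\rangle = \langle u, v_2\rangle$, so that $G = T(V)$, an element of $G$ lies in $\ker T^{p-1}$ exactly when it is the image under $T$ of a vector of $V$ lying in $\ker T^{p}$; hence the supremum above equals the largest kernel level attained by a vector of $V$. Now $u \in \ker T^{N}$, whereas $v_2 = u^{-} + c\mathbf{1}$ is a nonzero nonnegative vector and therefore $v_2 \notin \ker T^{N}$: if $y \ge 0$ is a left Perron vector of the nonnegative matrix $T$ with eigenvalue $\rho(T) \ge 6 > 0$ (the column sums of $T$ are the outdegrees, all at least $6$), then $\rho(T)^{N}\, y^{\top} v_2 = y^{\top} T^{N} v_2$, and $y^{\top} v_2 \ge c\, y^{\top}\mathbf{1} > 0$ forces $T^{N} v_2 \neq 0$. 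Consequently $V \cap \ker T^{N} = \langle u\rangle \subseteq \ker T^{k}$, the largest kernel level inside $V$ is exactly $k$, and $\phi(M) = k$, as required.

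The main obstacle is precisely this capping step: the easy half only gives $\phi(M) \ge k$, and one must rule out that $G$ secretly meets a higher kernel level. This is where the nonnegativity forced by the realisation lemmas matters — the device of shifting $u^{\pm}$ by $c\mathbf{1}$ keeps the generators in $\mathbb{N}^n$ (so the lemmas apply with $l=1$) while ensuring, through the Perron–Frobenius fact that a nonzero nonnegative vector cannot lie in the generalized kernel of $T$, that the only kernel direction inside $V$ is the chosen $\langle u\rangle$. The points I would verify carefully are that $T$ indeed admits a nonnegative left eigenvector with eigenvalue $\rho(T) > 0$ under the standing hypotheses, and that the no-source/no-sink convention makes $\mathcal{S}_D = \mathcal{S}(A)$ so that Proposition \ref{tata} yields $\fidim(A) = N+1$; granting these, the values $0,1,2,\dots,N,N+1$ are all admissible, so $A$ has no gaps.
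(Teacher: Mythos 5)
Your proposal is correct, and at its core it follows the same route as the paper: the paper's justification (written out in the Section 5 corollary on partial $\phi$-dimensions, which is what the theorem's one-line proof points to) also takes an integer vector in a kernel level of $T$, splits it as a difference of two nonnegative vectors, invokes Lemma \ref{todadim} (or Lemma \ref{flechadoble}) to produce two indecomposables whose first syzygy classes are the images under $T$ of those vectors, and reads off $\phi$ of their direct sum from Proposition \ref{nucleos} and Corollary \ref{coro_nucleos}. The differences are places where your write-up is tighter than the paper's. First, the paper chooses $v\in \ker T^{i-1}\setminus \ker T^{i-2}$ and asserts $\phi(M_+\oplus M_-)=i$; since the span of the realizable syzygy classes contains $T(v)$ rather than $v$, and $T(v)$ sits one kernel level lower, that choice actually yields $\phi = i-1$, so your indexing ($u\in\ker T^{k}\setminus\ker T^{k-1}$ giving $\phi = k$) is the correct one. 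Second, the paper gives no upper-bound argument at all for the exact value: your capping step --- shifting by $c(1,\dots,1)$ so the lemmas apply with $l=1$, and showing that a nonzero nonnegative vector never lies in any $\ker T^m$, so that $\langle v_1,v_2\rangle$ meets the kernel filtration only along $\langle u\rangle$ --- is precisely what is needed to turn the paper's assertion into a proof; note that here the full Perron--Frobenius machinery can be replaced by the observation that every vertex has positive outdegree, so $T^m$ applied to a nonnegative nonzero vector counts paths of length $m$ and is nonzero. Your handling of the endpoints ($0$; $1$ via Theorem \ref{teo.1.n-1}; and $\fidim(A)=N+1$ via Proposition \ref{tata} together with attainment of a finite integer supremum) matches the paper's intent, including the implicit no-source convention on which the paper itself relies whenever it uses $T$.
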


The previous theorem allows us to construct examples not considered in Example \ref{ej_menor_3}(See Remark 3.2 of \cite{BMR}).
\begin{obs}
Given a quiver $Q$ with $n$ vertices, there exist a qiver $Q'$ such that $Q_0 = Q'_0$, $Q$ is a subquiver of $Q'$ and $\fidim( \frac{\Bbbk Q'}{J^2}) = n$. In particular we can construct radical square zero algebras with  maximal $\phi$-dimension and where their asociated quiver verifies the hypothesis of Lemma \ref{todadim}.
\end{obs}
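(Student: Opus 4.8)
The plan is to reduce the statement to a single spectral condition on the matrix $T$ of the radical square zero algebra, and then to realize that condition by adding arrows to $Q$. Write $T_Q$ for the $n\times n$ nonnegative integer matrix of $Q$, where $(T_Q)_{ji}$ counts the arrows $i\to j$, and recall that for a quiver without sources nor sinks the matrix $T$ agrees with $\bar{\Omega}|_{K_1}$ on the basis $\{[S]\}_{S\in\mathcal S}$. For such a quiver $\mathcal S_D=\mathcal S$, so Proposition \ref{tata} gives $\fidim(\Bbbk Q'/J^2)=\phi(\oplus_{S\in\mathcal S}S)+1$, while Proposition \ref{toto} gives the a priori bound $\fidim\le n$. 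Using $\langle\{\bar{\Omega}[S]\}_{S\in\mathcal S}\rangle=\I T$ together with Corollary \ref{coro_nucleos}, a direct computation yields
$$\phi\Big(\bigoplus_{S\in\mathcal S}S\Big)=\sup\{k:(\ker T^{k-1}\setminus\ker T^{k-2})\cap \I T\neq\emptyset\}=\nu_0(T),$$
where $\nu_0(T)$ is the size of the largest Jordan block of $T$ for the eigenvalue $0$. Hence it suffices to produce a quiver $Q'$ on the same vertices, containing $Q$, with no sources nor sinks and whose matrix $T'$ satisfies $\nu_0(T')=n-1$; this forces $\fidim=n$.

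To obtain such a $T'$ I would look for a \emph{doubly $\lambda$-regular} matrix, i.e. with all row and all column sums equal to one integer $\lambda$, whose spectrum is $\{\lambda,0,\dots,0\}$ with the eigenvalue $0$ carried by a single Jordan block of size $n-1$. Double regularity makes $T'$ preserve the splitting $\mathbb{Q}^n=\langle\mathbf 1\rangle\oplus\mathbf 1^{\perp}$, acting as the Perron value $\lambda$ on $\langle\mathbf 1\rangle$ and nilpotently on $\mathbf 1^{\perp}$. Concretely I would set $T'=cU+N$, where $U$ is the all-ones matrix, $c\in\Z^{+}$, and $N$ is an integer matrix with $N\mathbf 1=0$ and $\mathbf 1^{T}N=0$ chosen so that $N|_{\mathbf 1^{\perp}}$ is a single nilpotent Jordan block of size $n-1$ (so $\rk N=n-2$). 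Such an $N$ is built by fixing the basis $g_i=e_i-e_{i+1}$ of $\mathbf 1^{\perp}$ and declaring $Ng_1=0$, $Ng_i=g_{i-1}$ for $i\ge 2$, and $N\mathbf 1=0$; clearing denominators keeps $N$ integral, doubly $0$-regular, and a single block on $\mathbf 1^{\perp}$. Since $cU$ adds $c$ to every entry while $N$ is fixed, taking $\lambda=cn$ with $c$ large guarantees at once $\lambda\ge 6$, $T'\ge 0$ and $T'\ge T_Q$ entrywise; the last inequality means every arrow of $Q$ survives, so the quiver $Q'$ with $(T')_{ji}$ arrows $i\to j$ contains $Q$ and satisfies $Q'_0=Q_0$.

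With this $T'$ the verification is routine: the row and column sums equal $\lambda\ge 6$, so $Q'$ has no sources nor sinks and $A'=\Bbbk Q'/J^2$ is not self-injective (each $\soc(\prj(S_i))=\Omega(S_i)$ has $\lambda\ge 6$ summands, so the Nakayama rule of Theorem \ref{Nakayama} is not a permutation); the spectrum of $T'$ is $\{\lambda,0^{(n-1)}\}$ with $\nu_0(T')=n-1$, whence $\fidim(A')=\nu_0(T')+1=n$ by the first paragraph (the cases $n\le 2$, where Corollary \ref{coro_nucleos} does not apply, are checked directly). For the final ``in particular'', note that $c\ge 1$ forces at least one arrow between every ordered pair of vertices, so for any two vertices the four arrows demanded in Lemma \ref{todadim} are available and every outdegree is $\ge 6$; thus $Q'$ meets the hypotheses of Lemma \ref{todadim} and attains the maximal $\phi$-dimension with no gaps. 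I expect the only real difficulty to be the construction in the second paragraph: exhibiting $N$ and checking that, after clearing denominators, $N|_{\mathbf 1^{\perp}}$ stays a \emph{single} block (equivalently $\rk T'=n-1$), since it is exactly this non-derogatory behaviour on the nilpotent part that upgrades the bound $\nu_0\le n-1$ to an equality; the remaining steps are bookkeeping with Propositions \ref{tata}, \ref{toto} and Corollary \ref{coro_nucleos}.
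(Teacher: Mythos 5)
Your argument is correct, but it cannot be compared step-by-step with the paper's proof for a simple reason: the paper offers no proof of this remark at all --- it is justified solely by a pointer to Remark 3.2 of \cite{BMR}, in the spirit of the hand-crafted quivers of Examples \ref{ejemplo_gap} and \ref{ejemplo_todas_las_dimensiones}, where kernel chains such as $e_{2i-1}-e_{2i}$ are engineered arrow by arrow. Your proposal is thus a self-contained replacement, and its two halves both check out. The reduction is sound: with no sources nor sinks one has $\mathcal{S}_D=\mathcal{S}$, so Proposition \ref{tata} (once non-self-injectivity is secured) gives $\fidim = \phi(\oplus_{S\in\mathcal{S}}S)+1$, and $\phi(\oplus_S S)$ is the index at which $\rk T^k$ stabilizes, i.e.\ the size $\nu_0(T)$ of the largest Jordan block at $0$; my one quibble is that you route this through Corollary \ref{coro_nucleos}, whose hypotheses ($\fidim(A)\geq 2$, $\phi\geq 2$) are not known before the computation, which is mildly circular --- deriving the identity directly from Definition \ref{monomorfismo} as rank stabilization of $T^k$ on the full lattice $\langle [S_1],\dots,[S_n]\rangle$ avoids this and also absorbs your separate treatment of $n\leq 2$. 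The construction $T'=cU+N$ works as claimed: $N\mathbf{1}=0$ and $\mathbf{1}^{T}N=0$ make the splitting $\mathbb{Q}^n=\langle\mathbf{1}\rangle\oplus\mathbf{1}^{\perp}$ invariant with $T'$ acting as $cn$ on $\mathbf{1}$ and as $N$ on $\mathbf{1}^{\perp}$; clearing denominators only rescales $N$, which leaves every $\rk N^k$ and hence the single-block structure intact, so $\nu_0(T')=n-1$ and $\fidim = n$, matching the bound of Proposition \ref{toto}; and for $c$ large $T'$ dominates $T_Q$ entrywise and is everywhere positive, so $Q\subseteq Q'$, there are no sinks or sources, each $\soc(\prj(S_i))=\rad \prj(S_i)$ has $cn\geq 2$ simple summands (so Theorem \ref{Nakayama} rules out self-injectivity), and the outdegree bound and the paired arrows $\alpha_1,\alpha_2,\beta_1,\beta_2$ demanded by Lemma \ref{todadim} are available for every pair of vertices when $n\geq 2$. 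Compared with the explicit construction the paper defers to, your spectral formulation buys uniformity --- a single recipe valid for every $Q$, with the hypotheses of Lemma \ref{todadim}, and hence the absence of gaps, built in by positivity --- at the cost of much larger arrow multiplicities than a hand-crafted example would require.
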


\section{Partial $\phi$-dimensions}

We define the partial $\phi$-dimension for a natural $l$ and restate, in Proposition \ref{prop.autoiny.marzelo}, the main result of \cite{HL} in our context. We also give examples in wich we compute the partial $\phi$-dimensions.

\begin{defi}
Given $l\in\Z^+$ we define the $l$ $\phi$-dimension of an algebra $A$ as:
\[\fidim_l (A) = \sup \left\{ \phi(M_1\oplus \ldots \oplus M_l): M_i \in \ind A \ \forall i = 1, \ldots, l \right\}.\]

\end{defi}

\begin{obs}\label{desigualdades}

Given an algebra $A$ we have:
\begin{enumerate}
\item $\fdim (A) = \fidim_1 (A)$,
\item for all $l\in\Z^+$ $\fidim_l(A)\leq \fidim_{l+1}(A) \leq \fidim(A)$,
\item if $\fidim(A)$ is finite, then there exists $l \in \Z^+$ such that $\fidim_l (A) = \fidim (A)$.
\end{enumerate}
\end{obs}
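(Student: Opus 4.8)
The plan is to derive all three statements from the elementary properties of $\phi$ collected in Proposition \ref{it1}, together with the Krull--Schmidt decomposition of finite dimensional modules; no delicate estimate is involved, so the work is essentially bookkeeping about suprema. For statement (1) I would prove the two inequalities separately. To see $\fdim(A)\leq \fidim_1(A)$, take any $M$ with $\pd(M)=d<\infty$ and decompose $M=\bigoplus_i M_i$ into indecomposables; since $\pd(M)=\max_i\pd(M_i)$, some indecomposable summand $M_j$ satisfies $\pd(M_j)=d<\infty$, whence $\phi(M_j)=\pd(M_j)=d$ by Proposition \ref{it1}(1), so $\fidim_1(A)\geq d$. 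For the reverse inequality, let $M\in\ind A$: if $\pd(M)<\infty$ then $\phi(M)=\pd(M)\leq \fdim(A)$ by Proposition \ref{it1}(1), and if $\pd(M)=\infty$ then $\phi(M)=0\leq \fdim(A)$ by Proposition \ref{it1}(2), using $\fdim(A)\geq 0$ (projectives have projective dimension $0$). Taking suprema gives $\fidim_1(A)\leq \fdim(A)$, hence equality.

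For statement (2) the inequality $\fidim_{l+1}(A)\leq \fidim(A)$ is immediate, since any $M_1\oplus\cdots\oplus M_{l+1}$ is an object of $\mod A$ and $\fidim(A)=\sup\{\phi(M):M\in\mod A\}$. For $\fidim_l(A)\leq \fidim_{l+1}(A)$, given indecomposables $M_1,\ldots,M_l$ I would append one further indecomposable summand, for instance an indecomposable projective $P$, and apply Proposition \ref{it1}(3) to get $\phi(M_1\oplus\cdots\oplus M_l)\leq \phi(M_1\oplus\cdots\oplus M_l\oplus P)\leq \fidim_{l+1}(A)$; taking the supremum over $l$-tuples yields the claim. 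Appending a projective in fact leaves $\phi$ unchanged, since projectives vanish in $K_0(A)$, which confirms the bound is tight.

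For statement (3), finiteness of $\fidim(A)=m$ means that the supremum of a set of natural numbers bounded by $m$ is attained, so there is $M\in\mod A$ with $\phi(M)=m$. Decomposing $M=\bigoplus_{i=1}^{k}M_i$ into indecomposables via Krull--Schmidt, and using Proposition \ref{it1}(4) to collapse repeated summands, I obtain $\phi(M_1\oplus\cdots\oplus M_k)=m$, so $\fidim_k(A)\geq m$; combined with $\fidim_k(A)\leq \fidim(A)=m$ from statement (2), this forces $\fidim_k(A)=m$, and taking $l=k$ finishes the proof.

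The only point requiring care, rather than a genuine obstacle, is tracking whether the relevant suprema are attained and controlling the effect of projective summands: one must invoke that the finitistic dimension is realised on an indecomposable module (through $\pd$ of a direct sum equalling the maximum over the summands) and that a finite $\phi$-dimension is realised by an actual module, both of which rest on Krull--Schmidt.
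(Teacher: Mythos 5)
Your proof is correct and complete. The paper offers no proof of this remark at all---it is stated as a routine observation---and your argument (Krull--Schmidt decompositions together with the basic properties of $\phi$ from Proposition \ref{it1}, plus the fact that a finite supremum of a set of natural numbers is attained) is exactly the bookkeeping the authors leave implicit.
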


\begin{prop}\label{prop.autoiny.marzelo}
Let $A$ be a finite dimensional algebra. The following statements are equivalent:
\begin{enumerate}
\item $A$ is a self-injective algebra.
\item $\fidim (A) = 0$.
\item There exists $l \in \Z^+$ such that $\fidim_l(A) = 0$.
\item $\fidim_2(A) = 0$.
\end{enumerate}
\end{prop}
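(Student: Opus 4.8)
The plan is to prove the cycle of implications $(1)\Rightarrow(2)\Rightarrow(4)\Rightarrow(3)\Rightarrow(1)$, since most of the individual arrows are already available from the preliminaries. The implication $(1)\Leftrightarrow(2)$ is precisely Theorem \ref{Phi = 0}, so that part requires no work beyond citing it. The implication $(2)\Rightarrow(4)$ is immediate from Remark \ref{desigualdades}: since $\fidim_2(A)\leq\fidim(A)$, assuming $\fidim(A)=0$ forces $\fidim_2(A)=0$ (as $\fidim_2(A)$ is a supremum of nonnegative integers bounded above by $0$). Likewise $(4)\Rightarrow(3)$ is trivial, taking $l=2$ as a witness. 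Thus the entire content of the proposition collapses onto the single nontrivial arrow $(3)\Rightarrow(1)$.

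For $(3)\Rightarrow(1)$ I would argue by contraposition: assume $A$ is not self-injective and show that $\fidim_l(A)>0$ for every $l\in\Z^+$. By Theorem \ref{Phi = 0}, $A$ not self-injective gives $\fidim(A)>0$, so Theorem \ref{teo.1.n-1} supplies a module $M$ with $\phi(M)=1$. The key observation is that the construction in the proof of Theorem \ref{teo.1.n-1} always produces such an $M$ as a direct sum of \emph{two} indecomposable modules $M_1\oplus M_2$ (examining each of the cases there: $\nu$ non-injective, $\nu$ not a function with distinct simple socle summands, or with repeated socle). Hence $\phi(M_1\oplus M_2)=1$ with $M_1,M_2\in\ind A$, which already shows $\fidim_2(A)\geq 1>0$. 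Since $\fidim_2(A)\leq\fidim_l(A)$ for all $l\geq 2$ by Remark \ref{desigualdades}(2), we get $\fidim_l(A)>0$ for every $l\geq 2$, contradicting the existence of any $l$ with $\fidim_l(A)=0$.

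The main obstacle is verifying that the witness from Theorem \ref{teo.1.n-1} really uses at most two indecomposable summands, so that it certifies $\fidim_2(A)>0$ rather than merely $\fidim_l(A)>0$ for some large $l$; this matters because statement $(4)$ singles out $l=2$ specifically. Inspecting the three cases of that proof, each exhibits a pair $M_1,M_2$ of non-isomorphic indecomposables with $\phi(M_1\oplus M_2)=1$, so the bound $l=2$ is genuinely attained and no refinement is needed. One should also cover the edge case $l=1$: statement $(3)$ permits the witness $l=1$, but $\fidim_1(A)=\fdim(A)$ by Remark \ref{desigualdades}(1), and a self-injective algebra is the only way to have $\fidim_l(A)=0$ for \emph{some} $l$, so once we know $\fidim_2(A)>0$ the monotonicity in Remark \ref{desigualdades}(2) rules out $\fidim_l(A)=0$ for every $l\geq 2$; the case $l=1$ giving $\fdim(A)=0$ is harmless since it is implied by $\fidim(A)=0$ through the chain already closed, so no separate treatment threatens the argument.
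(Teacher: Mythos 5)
Your proof is essentially the paper's proof: the only substantive arrow is closed by appealing to the proof of Theorem \ref{teo.1.n-1}, which in each of its cases produces \emph{two} indecomposable modules $M_1,M_2$ with $\phi(M_1\oplus M_2)=1$ whenever $\fidim(A)>0$, and all remaining arrows are the trivial inequalities of Remark \ref{desigualdades}. The paper runs the cycle $(2)\Rightarrow(3)\Rightarrow(4)\Rightarrow(2)$ while you run $(2)\Rightarrow(4)\Rightarrow(3)\Rightarrow(1)$, but that difference is cosmetic; your observation that the witness has exactly two indecomposable summands is precisely what the paper's proof of $(4)\Rightarrow(2)$ rests on.

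The one place where you go beyond the paper --- the edge case $l=1$ --- is also where your argument breaks down, and the sentence you offer there is circular: ``a self-injective algebra is the only way to have $\fidim_l(A)=0$ for some $l$'' is exactly the implication $(3)\Rightarrow(1)$ you are trying to prove. In fact $(3)\Rightarrow(1)$ is false if the witness $l=1$ is allowed. By Remark \ref{desigualdades}, $\fidim_1(A)=\fdim(A)$, and there exist non-self-injective algebras with vanishing finitistic dimension: take $A=\Bbbk Q/J^2$ with $Q$ the quiver having one arrow $1\to 2$ and a loop at $2$ (the quiver of the example inside Corollary \ref{iny+ind}). Here $\mbox{\rm{rad}}(P_1)\cong\mbox{\rm{rad}}(P_2)\cong S_2$ and $\Omega(S_2)=S_2$, so every syzygy is a direct sum of copies of $S_2$ and every non-projective module has infinite projective dimension, giving $\fidim_1(A)=\fdim(A)=0$; yet $\soc(P_1)\cong\soc(P_2)\cong S_2$, so $A$ is not self-injective by Theorem \ref{Nakayama} (indeed $\fidim(A)=1$ by Proposition \ref{tata}). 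Hence your contraposition, which only yields $\fidim_l(A)>0$ for $l\geq 2$, cannot be repaired at $l=1$: statement $(3)$ must be read with $l\geq 2$. To be fair, the paper's own proof suffers from the same latent defect, since its step $(3)\Rightarrow(4)$ invokes the monotonicity $\fidim_2(A)\leq\fidim_l(A)$, which likewise requires $l\geq 2$. Modulo that shared reading of $(3)$, your argument is correct and coincides with the paper's.
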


\begin{proof}
By Corollary $6$ of \cite{HL} the first two statements are equivalent. 
By Remark \ref{desigualdades} part (2) we have that the second statement implies the third one, and also that the third one implies the fourth one.

To see that the fourth statement implies the second one, by the proof of Theorem \ref{teo.1.n-1} if $\fidim (A) > 0$ then there exist two indecomposable modules $M_1$, $M_2$ such that $\phi(M_1\oplus M_2) = 1$
\end{proof}

In some families of algebras all partial dimensions coincide. For a proof of the following result see Theorem 4.7 of \cite{LM}.

\begin{ej}

If $A$ is an $m$-Gorenstein algebra, then: 

$$\fidim_1(A)= \fidim_2(A) = \cdots = \fidim(A)\leq m.$$

\end{ej}

\subsection{Partial $\phi$ dimension for radical square zero algebras}

The following two results about partial dimensions can be found in \cite{LMM}.

\begin{prop}\label{prop.LMM.4.14}

If $A = \frac{\Bbbk Q}{J^2}$ is a radical square zero algebra with $\vert Q_0 \vert = n$, then $\fidim (A) = \fidim_{n}(A)$. 

\begin{proof}

See Proposition 4.14 of \cite{LMM}.\end{proof}

\end{prop}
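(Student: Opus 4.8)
The plan is to prove the two inequalities separately. The inequality $\fidim_{n}(A) \leq \fidim(A)$ is immediate from Remark \ref{desigualdades}(2), so the entire content is the reverse inequality $\fidim(A) \leq \fidim_{n}(A)$. First I would record that $\fidim(A)$ is finite: by Proposition \ref{toto} we have $\fidim(A) \leq n$, and since $\phi$ takes values in $\{0,1,\dots,n\}$, the supremum defining $\fidim(A)$ is in fact a maximum. Thus I may fix a module $M$ with $\phi(M) = \fidim(A) =: d$ together with a decomposition $M \cong \bigoplus_{i=1}^{t} M_i^{a_i}$ into indecomposables, and it suffices to produce an $A$-module with at most $n$ indecomposable summands whose $\phi$-value is at least $d$.

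The key structural input is that for a radical square zero algebra $\bar{\Omega}\langle \add M\rangle$ is contained in $K_1(A)$, which has a basis indexed by $\mathcal{S}_D$ and hence rank at most $n$. Working rationally, I would choose indices $i_1,\dots,i_m$ with $m \leq n$ so that $\{\bar{\Omega}[M_{i_1}],\dots,\bar{\Omega}[M_{i_m}]\}$ is a maximal linearly independent subset of $\{\bar{\Omega}[M_i]\}_{i=1}^{t}$, and set $N = M_{i_1}\oplus\cdots\oplus M_{i_m}$. By construction $\bar{\Omega}\langle \add N\rangle$ and $\bar{\Omega}\langle \add M\rangle$ span the same $\mathbb{Q}$-subspace of $K_1(A)\otimes\mathbb{Q}$; applying $\bar{\Omega}^{k-1}$ to this equality gives $\rk \bar{\Omega}^{k}\langle \add N\rangle = \rk \bar{\Omega}^{k}\langle \add M\rangle$ for every $k \geq 1$. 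Since the $\phi$-value is read off as the place where this non-increasing sequence of ranks stabilizes, and the two sequences coincide from $k=1$ on, a short check settles the case $d \geq 2$: the strict drop $\rk \bar{\Omega}^{d-1}\langle \add M\rangle > \rk \bar{\Omega}^{d}\langle \add M\rangle$ occurs at an index $\geq 1$, so it is seen by $N$ as well, forcing $\phi(N) = \phi(M) = d$ and hence $\fidim_{n}(A) \geq d$.

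It remains to settle the small values. If $d = 0$ then $\fidim(A) = 0 = \fidim_{n}(A)$ trivially. The genuinely delicate case is $d = 1$: here the construction can collapse, because the chosen summands are non-projective and pairwise non-isomorphic, so $\rk\langle \add N\rangle = m = \rk \bar{\Omega}\langle \add N\rangle$ and the rank sequence of $N$ may already be constant from $k=0$, yielding only $\phi(N) \in \{0,1\}$. I would dispatch this separately by invoking Proposition \ref{prop.autoiny.marzelo}: if $\fidim(A) = 1 > 0$ then $A$ is not self-injective, so $\fidim_2(A) \neq 0$, i.e. $\fidim_2(A) \geq 1$; and since a non-self-injective radical square zero algebra has $n \geq 2$ vertices, $\fidim_{n}(A) \geq \fidim_2(A) \geq 1 = d$. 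Combining the three cases gives $\fidim(A) \leq \fidim_{n}(A)$, hence equality. The main obstacle is precisely this transition at $d = 1$, together with the bookkeeping required to justify that coincidence of the iterated rank sequences from $k=1$ onward forces coincidence of the stabilization indices; everything else is the routine linear algebra of selecting a spanning subfamily inside the rank-$\leq n$ group $K_1(A)$.
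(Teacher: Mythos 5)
The paper offers no proof of its own to compare against---the ``proof'' is just a citation of Proposition 4.14 of \cite{LMM}---so your argument has to stand on its own merits. Its core does: for $d := \fidim(A) \geq 2$ (the supremum is attained since $\fidim(A) \leq n$ by Proposition \ref{toto}), your choice of a maximal $\mathbb{Q}$-linearly independent subfamily $\{\bar{\Omega}[M_{i_1}], \ldots, \bar{\Omega}[M_{i_m}]\}$ with $m \leq \rk K_1(A) \leq n$ makes the rank sequences of $N = M_{i_1}\oplus \cdots \oplus M_{i_m}$ and of $M$ agree from index $1$ onwards, so the last strict drop (at index $d-1 \geq 1$) is seen by $N$ as well, giving $\phi(N) = d$ with at most $n$ indecomposable summands; together with Remark \ref{desigualdades}(2) this yields $\fidim_n(A) \geq d$. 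The case $d = 0$ is trivial, and you correctly isolate $d = 1$ as the delicate case.

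Your patch for $d = 1$, however, rests on a false claim: a non-self-injective radical square zero algebra can have a single vertex. Take $Q$ to be one vertex with two loops, so $A = \Bbbk Q/J^2 \cong \Bbbk\langle x,y\rangle/(x,y)^2$. Then $\soc(\prj(S)) = \soc(A) \cong S^2$ is not simple, so $A$ is not self-injective by Theorem \ref{Nakayama}, yet $n = 1$. Moreover this is not a repairable slip in your write-up, because for this algebra the proposition itself fails as literally stated: $\fidim(A) = 1$ (it is positive by Theorem \ref{Phi = 0} and at most $n = 1$ by Proposition \ref{toto}), whereas $\fidim_1(A) = 0$, since every indecomposable $A$-module is either projective or has infinite projective dimension (all syzygies are direct sums of copies of $S$ and $\pd S = \infty$), so $\phi$ vanishes on indecomposables by item 2 of Proposition \ref{it1}. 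Hence $\fidim(A) \neq \fidim_n(A)$ for this $A$, and the cited result of \cite{LMM} must implicitly carry hypotheses excluding this situation. Note that your own argument does prove the statement for every $n \geq 2$: in the remaining case $d = 1$ one gets $\fidim_n(A) \geq \fidim_2(A) \geq 1$ from Proposition \ref{prop.autoiny.marzelo} exactly as you say. So the one genuine gap is the unproved (and false) assertion that non-self-injectivity forces $n \geq 2$; once the one-vertex multi-loop quiver is recognized as a counterexample rather than a case to be proved, your proof covers everything that is actually true.
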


\begin{prop}\label{fimaxima}

If $A = \frac{\Bbbk Q}{J^2}$ is a radical square zero algebra with $\vert Q_0 \vert = n$, and $\fidim (A) = n$ then $\fidim_2 (A) = n$.

\end{prop}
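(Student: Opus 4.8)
The plan is to start from a module that realizes the full $\phi$-dimension and then cut the number of indecomposable summands needed down to two. The cases $n\leq 2$ are immediate from Proposition \ref{prop.LMM.4.14} (for $n=2$ it gives $\fidim_2(A)=\fidim_n(A)=n$ directly), so assume $n\geq 3$. By Proposition \ref{prop.LMM.4.14} there is a module $M=\bigoplus_{i=1}^r M_i$ with each $M_i\in\ind A$, $r\leq n$, and $\phi(M)=n$. Since $\fidim(A)=n\geq 2$ and $\phi(M)=n\geq 2$, Corollary \ref{coro_nucleos} applies and yields a vector
\[
v\in(\ker T^{n-1}\setminus\ker T^{n-2})\cap\langle\bar{\Omega}([M_1]),\ldots,\bar{\Omega}([M_r])\rangle .
\]

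First I would exploit the maximality $\fidim(A)=n$ at the level of the linear map $T\colon\mathbb{Q}^n\to\mathbb{Q}^n$. The existence of $v$ forces the chain $0=\ker T^0\subsetneq\ker T^1\subsetneq\cdots\subsetneq\ker T^{n-1}$ to be strictly increasing, whence $\dim\ker T^{n-1}\geq n-1$ and $\rk T^{n-1}\leq 1$; moreover $T^{n-2}(v)\neq 0$ gives a nonzero vector annihilated by $T$. Consequently $T$ has a single nilpotent Jordan block of size $n-1$ (the remaining one-dimensional piece being either a second $0$-block of size $1$ or an eigenvector for a nonzero eigenvalue), and $\ker T^{n-2}$ has codimension at most $2$ in $\mathbb{Q}^n$. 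This rigidity is what should make two summands enough: the set $\ker T^{n-1}\setminus\ker T^{n-2}$ is cut out by very few linear conditions, so one does not need many syzygy classes to reach it.

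The heart of the argument is to replace the $r$ classes $\bar{\Omega}([M_i])$ by two. Writing $v=\sum_i\alpha_i\bar{\Omega}([M_i])$ and separating positive from negative coefficients gives $v=v^{+}-v^{-}$, where each of $v^{+},v^{-}$ is a nonnegative rational combination of dimension vectors of the semisimple syzygies $\Omega(M_i)$ (recall $\Omega$ lands in semisimple modules for a radical square zero algebra). After clearing denominators I would construct two indecomposable modules $N_1,N_2$ together with $l_1,l_2\in\mathbb{Z}^{+}$ such that $\bar{\Omega}(l_1[N_1])=v^{+}$ and $\bar{\Omega}(l_2[N_2])=v^{-}$, in the spirit of the realizations of Lemma \ref{todadim} and Lemma \ref{flechadoble}: the point is that $v^{\pm}$ are genuine dimension vectors, and the structure of $T$ pins down the relevant portion of $Q$. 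Then $v\in\langle\bar{\Omega}([N_1]),\bar{\Omega}([N_2])\rangle$, so Corollary \ref{coro_nucleos} gives $\phi(N_1\oplus N_2)\geq n$; combined with $\phi(N_1\oplus N_2)\leq\fidim(A)=n$ this forces $\phi(N_1\oplus N_2)=n$, and therefore $\fidim_2(A)=n$.

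The main obstacle is exactly this last module-theoretic realization: producing two indecomposables whose syzygy classes span $v$. Unlike in Lemmas \ref{todadim} and \ref{flechadoble}, here one cannot assume large outdegrees, so the construction must instead be extracted from the rigid shape of $Q$ forced by $\fidim(A)=n$. I expect that analysing the single size-$(n-1)$ Jordan block of $T$ --- equivalently, a distinguished long path-like configuration in the separated quiver $\Gamma_A$ --- will let me assemble $N_1$ and $N_2$ by gluing the indecomposable $\tilde{D}_4$-type modules of Remark \ref{D_4tilde} along that configuration, precisely as in the proof of Lemma \ref{todadim}.
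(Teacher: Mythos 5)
First, a point of comparison: the paper does not actually prove this proposition internally --- its ``proof'' is the citation ``See Corollary 5.7 of \cite{LMM}'' --- so there is no argument of the paper to match yours against; your attempt has to stand on its own. Its skeleton is sound: by Remark \ref{desigualdades} and Proposition \ref{prop.LMM.4.14} it suffices to produce two indecomposables $N_1,N_2$ and a vector of $\ker T^{n-1}\setminus\ker T^{n-2}$ lying in $\langle\bar{\Omega}([N_1]),\bar{\Omega}([N_2])\rangle$, after which Proposition \ref{nucleos} and the sandwich $n\leq\phi(N_1\oplus N_2)\leq\fidim(A)=n$ finish the job. But the decisive step --- realizing positive multiples of $v^{+}$ and $v^{-}$ as $\bar{\Omega}(l_1[N_1])$ and $\bar{\Omega}(l_2[N_2])$ with $N_1,N_2$ indecomposable --- is never carried out; you yourself label it ``the main obstacle'' and offer only the expectation that it can be done. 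The tools you point to cannot fill the hole: Lemma \ref{todadim} needs every vertex to have outdegree at least $6$, Lemma \ref{flechadoble} needs outdegree at least $4$ together with a double arrow, and both need a special two-vertex arrow configuration; none of these hypotheses follow from $\fidim(A)=n$. Moreover this is exactly where the content of the proposition lies: over a radical square zero algebra, $\Omega(N)$ for $N$ indecomposable is a semisimple submodule of $\mathrm{rad}(P)$ with $P$ the projective cover of $N$, so the classes of the form $\bar{\Omega}(l[N])$ are tightly constrained by the arrows of $Q$ (for an oriented cycle, for instance, only multiples of single classes $[S_i]$ occur). A priori your $v^{+},v^{-}$ are just nonnegative combinations of the syzygy classes of the possibly many summands $M_i$; compressing each into a single indecomposable is precisely what must be proved, so the proposal in effect assumes the crux.

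There is also a gap at the very first step: the map $T$, Proposition \ref{nucleos} and Corollary \ref{coro_nucleos} are only available for quivers without sources and sinks (this is part of the paper's definition of $T$), and you never verify that $\fidim(A)=n$ forces $Q$ to have none. This is true, but it needs an argument, and your Jordan-form discussion tacitly relies on it: it is the absence of sinks (hence the existence of a cycle, hence $T$ not nilpotent) that rules out the alternatives to your ``one nilpotent block of size $n-1$ plus a nonzero eigenvalue'' picture. Finally, the dismissal of the small cases via Proposition \ref{prop.LMM.4.14} is only clean for $n=2$: for $n=1$ that proposition would read $\fidim(A)=\fidim_1(A)=\fdim(A)$, which fails for the one-vertex quiver with two loops (there $\fdim=0$ while $\fidim=1$), so even the base case needs a separate, if easy, argument. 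In summary, the reduction to a two-summand realization problem is a reasonable plan, but the realization itself --- the heart of \cite{LMM}'s Corollary 5.7 --- is missing, and the auxiliary facts you lean on are either inapplicable or unproved.
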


\begin{proof}

See Corollary 5.7 of \cite{LMM}.\end{proof}

Now, we give a generalization of Proposition \ref{fimaxima}.

\begin{prop}

Let $A = \frac{\Bbbk Q}{J^2}$ be a radical square zero algebra with $\vert Q_0 \vert = n$ and without sinks nor sources. If $\fidim (A) = k \leq n$ then $\fidim_{n-k+2}(A) = k$.

\begin{proof}
We can assume that $k\geq3$ because the other cases were considered in Proposition \ref{prop.LMM.4.14}.
Since $\fidim (A) = k$ and $\fidim (A) = \phi \left(\oplus_{S\in \mathcal{S}}S\right) + 1$, then
$$\rk T^{k-2} > \rk T^{k-1} = m_0 \leq n-(k-1).$$ 
This implies that there exist indices $i_1,i_2\dots, i_{m_0+1}$ such that the set\\
$A = \left\{T^{k-2}\left(e_{i_1}\right), T^{k-2}\left(e_{i_2}\right), \ldots ,T^{k-2}\left(e_{i_{m_0+1}}\right) \right\}$ 
is linearly independent, and the set $T(A)$ is linearly dependent.
Thus $\phi\left(S_{i_1}\oplus S_{i_2} \oplus \ldots \oplus S_{i_{m_0}+1}\right) \geq k-1$.
Finally, if we consider $M_j \in \mod A$ such that $\Omega\left(M_j\right) = S_j$, then $\phi\left(\oplus_{j = 1}^{m_0 +1} M_j\right) = k$.
\end{proof} 
\end{prop}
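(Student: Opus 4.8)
The plan is to establish the two inequalities $\fidim_{n-k+2}(A)\le k$ and $\fidim_{n-k+2}(A)\ge k$ separately. The first is free: by part (2) of Remark \ref{desigualdades} one always has $\fidim_l(A)\le\fidim(A)=k$. I would also dispose at once of the cases $k\le 2$, where $n-k+2\ge n$: there Proposition \ref{prop.LMM.4.14} gives $\fidim(A)=\fidim_n(A)$, and monotonicity of the partial dimensions (Remark \ref{desigualdades}) forces $\fidim_{n-k+2}(A)=k$. So I may assume $k\ge 3$, and the whole work is to exhibit indecomposable modules $M_1,\dots,M_{m_0+1}$ with $m_0+1\le n-k+2$ and $\phi\!\left(\bigoplus_j M_j\right)=k$; then monotonicity upgrades this to $\fidim_{n-k+2}(A)\ge k$.

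The first real step is to read the hypothesis through the operator $T$. Since $Q$ has no sinks nor sources, every simple lies in $\mathcal{S}_D$, the classes $\{[S_i]\}$ form a basis of $K_1(A)$ on which $\bar\Omega$ acts as $T$ (identifying $[S_i]$ with $e_i$), and by Proposition \ref{tata} one has $\phi\!\left(\bigoplus_{S\in\mathcal{S}}S\right)=\fidim(A)-1=k-1$. Since $\langle\add(\bigoplus_S S)\rangle$ has full rank $n$, this means precisely that the non-increasing sequence $\rk T^{0}\ge\rk T^{1}\ge\cdots$ first becomes constant at index $k-1$. The key observation is that, before stabilizing, this sequence must drop strictly at every step (if $\rk T^{j}=\rk T^{j+1}$ for some $j<k-1$ then $T$ would already be injective on $\I T^{j}$, hence on all later images, contradicting that stabilization occurs at $k-1$). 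Therefore $n=\rk T^{0}>\rk T^{1}>\cdots>\rk T^{k-1}=:m_0$, which yields $m_0\le n-(k-1)=n-k+1$, so $m_0+1\le n-k+2$. This rank estimate is exactly what controls the number of summands.

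Next I would extract the vector that detects the value $k$. Since $\rk T^{k-2}>m_0$, I can choose indices $i_1,\dots,i_{m_0+1}$ for which $\{T^{k-2}(e_{i_j})\}_j$ is linearly independent; applying $T$ sends these $m_0+1$ vectors into $\I T^{k-1}$, a space of dimension $m_0$, so they become dependent, giving a nontrivial relation $\sum_j\alpha_j T^{k-1}(e_{i_j})=0$. Setting $w:=\sum_j\alpha_j e_{i_j}$ we get $T^{k-1}(w)=0$ but $T^{k-2}(w)\ne 0$, that is $w\in\ker T^{k-1}\setminus\ker T^{k-2}$. It remains to realize the selected simples as genuine syzygies of indecomposables: because $Q$ has no sources, each vertex $i_j$ receives an arrow $j'\to i_j$, so $S_{i_j}\subseteq\soc(P_{j'})=\operatorname{rad}(P_{j'})$, and $M_j:=P_{j'}/S_{i_j}$ is a module with simple top $S_{j'}$, hence local and indecomposable, satisfying $\Omega(M_j)=S_{i_j}$ and thus $\bar\Omega([M_j])=e_{i_j}$. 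Consequently $w$ belongs to $\langle\{\bar\Omega([M_j])\}_j\rangle$, and since $w\in\ker T^{k-1}\setminus\ker T^{k-2}$, Corollary \ref{coro_nucleos} gives $\phi\!\left(\bigoplus_j M_j\right)\ge k$; combined with the trivial bound $\phi\le\fidim(A)=k$ this is an equality, and the proof is complete.

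The part I expect to be delicate is twofold. The homological heart is the strict-decrease argument for $\rk T^{j}$, since that is what turns the single equality $\phi(\bigoplus_S S)=k-1$ into the quantitative bound $m_0\le n-k+1$ and hence caps the number of required summands at $n-k+2$; without it the count collapses. The module-theoretic subtlety is the realization step, where one needs the $M_j$ to be indecomposable with syzygy \emph{exactly} $S_{i_j}$ — this is where the no-sources hypothesis is genuinely used, and the no-sinks hypothesis guarantees $P_{j'}$ is non-simple so that $M_j\neq 0$. Once both are secured, passing from $w$ to the value $k$ is a direct invocation of Corollary \ref{coro_nucleos}.
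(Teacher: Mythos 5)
Your proof is correct and takes essentially the same route as the paper's: reduce to $k\geq 3$ via Proposition \ref{prop.LMM.4.14}, use the strict decrease of the ranks $\rk T^j$ before stabilization to get $m_0+1\leq n-k+2$, choose indices with $\left\{T^{k-2}(e_{i_j})\right\}$ independent but $\left\{T^{k-1}(e_{i_j})\right\}$ dependent, and lift the resulting vector of $\ker T^{k-1}\setminus\ker T^{k-2}$ through indecomposable modules $M_j$ with $\Omega(M_j)=S_{i_j}$ to conclude via Proposition \ref{nucleos}. The only difference is that you make explicit two points the paper leaves implicit (the justification of the strict rank drop, and the construction $M_j=P_{j'}/S_{i_j}$ guaranteeing each simple is a syzygy of an indecomposable), which strengthens rather than changes the argument.
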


\begin{ej}\label{ejemplo_todas_las_dimensiones}
Let $\Gamma^m$ be the following quiver with $|\left(\Gamma^m\right)_0| = n$ and without sinks nor sources,

$$\xymatrix{  1 \ar@(d,l) \ar[r]& 2 \ar[r] & 3 \ar[r]& \ldots \ar[r]& m \ar@(d,r)\\
&&& m+1 \ar@(d,r) \ar[ur]&\\
&&& \vdots &\\
&&&n \ar@(d,l) \ar[uuur]}$$ 

\vspace{.5cm}

then $\fidim_2 \left(\frac{\Bbbk \Gamma^m}{J^2}\right) = \fidim \left(\frac{\Bbbk \Gamma^m}{J^2}\right)  = m-1$ for $ 0 < m \leq n$.

\end{ej}

As a consequence of the Lemma \ref{todadim} (or Lemma \ref{flechadoble}) and Proposition \ref{nucleos} we obtain the following result.

\begin{coro}
In the same hypothesis of Lemma \ref{todadim} or Lemma \ref{flechadoble}, for $1 \leq i \leq \fidim(A)-1$ there exist indecomposable modules $M^i_1$ and $M^i_2$ such that $\phi(M^i_1 \oplus M^i_2) = i$. In particular $\fidim_2(A) \geq \fidim (A)-1$.
\end{coro}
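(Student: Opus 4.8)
The plan is to reduce the statement to a purely linear-algebraic fact about the kernel filtration of $T$, exploiting Corollary \ref{coro_nucleos} to read off $\phi$ of a pair from the kernels $\ker T^j$ that its $\bar{\Omega}$-classes reach. Write $m=\fidim(A)$ and recall that $A$ is not self-injective, so by Proposition \ref{tata} we have $\phi\!\left(\oplus_{S\in\mathcal{S}}S\right)=m-1$, while $\langle\{\bar{\Omega}[S]\}_{S\in\mathcal{S}}\rangle=\I T$. Applying Corollary \ref{coro_nucleos} to this sum shows that $\I T$ meets $\ker T^{m-2}\setminus\ker T^{m-3}$; picking $w=T(x)$ there forces $x\in\ker T^{m-1}\setminus\ker T^{m-2}$, so the whole chain $\ker T^{0}\subsetneq\ker T^{1}\subsetneq\cdots\subsetneq\ker T^{m-1}$ is strict. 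Hence for every $i$ with $1\le i\le m-1$ I can choose an integer vector $z_i\in\ker T^{i}\setminus\ker T^{i-1}$, and then $T(z_i)\in\ker T^{i-1}\setminus\ker T^{i-2}$.

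Next I would turn $z_i$ into an honest pair of indecomposables using Lemma \ref{todadim} (or Lemma \ref{flechadoble}), whose whole point is that \emph{any} vector of $\N^{n}$ is realized by an indecomposable module. The obstruction is that $z_i$ need not be non-negative, so I would shift: let $\mathbf{1}=(1,\dots,1)$ and fix $c>\max_k|(z_i)_k|$, so that $u_2=c\mathbf{1}$ and $u_1=c\mathbf{1}+z_i$ both lie in $\N^{n}$ with at least two non-zero entries. The Lemma then yields indecomposables $M^{i}_1=M_{u_1}$ and $M^{i}_2=M_{u_2}$ with $\bar{\Omega}[M^{i}_1]=T(u_1)=cT(\mathbf{1})+T(z_i)$ and $\bar{\Omega}[M^{i}_2]=T(u_2)=cT(\mathbf{1})$. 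Because $Q$ has neither sinks nor sources, $T(\mathbf{1})$ is strictly positive and so is every $T^{k}(\mathbf{1})$; thus $cT(\mathbf{1})$ is never annihilated by a power of $T$ (it lies outside $\ker T^{j}$ for all $j$), whereas $\bar{\Omega}[M^{i}_1]-\bar{\Omega}[M^{i}_2]=T(z_i)$ sits exactly at level $i-1$ of the filtration.

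Then I would compute $\phi(M^{i}_1\oplus M^{i}_2)$. Over $\mathbb{Q}$ the subgroup $\langle\bar{\Omega}[M^{i}_1],\bar{\Omega}[M^{i}_2]\rangle$ equals $\langle T(\mathbf{1}),\,T(z_i)\rangle$; a combination $aT(z_i)+bT(\mathbf{1})$ lies in some $\ker T^{j}$ only if $b=0$, since the $T(\mathbf{1})$-summand survives every power of $T$. Therefore the only vectors of this span lying in a kernel are the multiples of $T(z_i)$, all of which sit in $\ker T^{i-1}\setminus\ker T^{i-2}$. For $i\ge 2$, the relation $T(z_i)=\bar{\Omega}[M^{i}_1]-\bar{\Omega}[M^{i}_2]$ gives $\phi\ge i$ by Proposition \ref{nucleos}, and then Corollary \ref{coro_nucleos} upgrades this to the equality $\phi(M^{i}_1\oplus M^{i}_2)=i$ because the supremum of attainable levels is exactly $i-1$. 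The case $i=1$ is degenerate ($T(z_1)=0$, so $\bar{\Omega}[M^{1}_1]=\bar{\Omega}[M^{1}_2]$ is a strictly positive class of infinite order): $\bar{\Omega}$ drops the rank of $\langle[M^{1}_1],[M^{1}_2]\rangle$ from $2$ to $1$ and is injective thereafter, giving $\phi=1$; alternatively one simply invokes Proposition \ref{prop.autoiny.marzelo}. Finally, $\fidim_2(A)\ge\fidim(A)-1$ is immediate from the case $i=m-1$.

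The main obstacle, and the only genuinely delicate point, is the non-negativity constraint built into Lemma \ref{todadim}: the kernel vectors $z_i$ live in a rational subspace that need not meet the positive orthant, and $T(z_i)$ need not be realizable as $\bar{\Omega}$ of a single module. The shift by $c\mathbf{1}$ circumvents this, but it then requires the bookkeeping above to confirm that adding the strictly positive, never-annihilated vector $cT(\mathbf{1})$ does not create any spurious element of the span lying in a deeper kernel $\ker T^{j}$ with $j\ge i-1$; establishing that the filtration is strict up to level $m-1$ (so that the $z_i$ exist in the first place) is the other load-bearing step.
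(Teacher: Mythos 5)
Your proof is correct and is essentially the paper's own argument: realize an integer vector of $\ker T^{i}\setminus\ker T^{i-1}$ as a difference of two vectors in $\N^n$, produce indecomposable modules from these via Lemma \ref{todadim}, and read off $\phi$ from Proposition \ref{nucleos} and Corollary \ref{coro_nucleos} (the paper decomposes $v=v^+-v^-$ into positive and negative parts where you shift by $c\mathbf{1}$, an immaterial difference). If anything, your write-up is more complete than the paper's three-line proof, which neither justifies that the kernel filtration is strict up to level $\fidim(A)-1$ (the existence of your $z_i$) nor checks the upper bound $\phi\le i$, and whose literal choice $v\in\ker T^{i-1}\setminus\ker T^{i-2}$ actually yields a pair with $\phi=i-1$ — precisely the indexing and positivity points your argument settles.
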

\begin{proof}
First we notice that every non-projective $A$-module has infinite projective dimension. 
Next we consider $v \in \mathbb{Z}^n \cap  (\ker T^{i-1} \backslash \ker T^{i-2})$ for $ 1 \leq i \leq n-1 $, then there exist $v^+$, $v^- \in \mathbb{N}^n$ such that $v = v^+-v^-$.  By Lemma \ref{todadim}, there are indecomposable  modules $M_+$, $M_{-}$ and $l^+, l^- \in \mathbb{Z}^+$ such that $\Omega\left({M_+}^{l^+}\right) = \sum_{i = 1}^n \alpha_i[S_i]$, $T(v^+) = \sum_{i = 1}^n \alpha_i e_i$ and $\Omega\left({M_-}^{l^-}\right) = \sum_{i = 1}^n \beta_i [S_i]$, $T(v^-) = \sum_{i = 1}^n \beta_i e_i$. Hence $\phi(M_+\oplus M_{-}) = i$. \end{proof}

The following example shows that the inequalities in Remark \ref{desigualdades} could be strict.

\begin{ej}

Consider the radical square zero algebra $A = \frac{\Bbbk Q}{J^2}$, whose quiver $Q$ is the following:
$$\xymatrix{ & 1 \ar@(ul,dl) \ar[r]& 2 \ar[r] & 3 \ar@/^/[r]& 4 \ar@(dr,ur) \ar@/^/[l]}$$

\vspace{.5cm}
Then $K_1(A)$ has basis $\left\{[S_1], [S_2], [S_3], [S_4]\right\}$, where $S_i$ is the simple module associated to the vertex $i$.
The values of   $\Omega$ in the simple modules are the following:

\begin{itemize}
  \item $\Omega(S_1) = S_1 \oplus S_2$.
  \item $\Omega (S_2) = S_3$.
  \item $\Omega (S_3) = S_4$.
  \item $\Omega (S_4) = S_3 \oplus S_4$.
\end{itemize}

From this we see that  $rk(K_1(A)) = 4 > 3 = rk (K_2(A)) = rk(K_3(A))$, which implies that  $\fidim ( A) = 2$. If we assume that  $M = M_1 \oplus M_2$ is the decomposition of a module $M$ such that $\phi(M) = \fidim A$, we see that  $\left\{ [\Omega(M_1)], [\Omega(M_2)] \right\}$ must be linearly independent and  $\left\{[ {\Omega}^{2}(M_1)], [\Omega^{2}(M_2)] \right\}$ must be colinear. Since  $[S_2 ]+ [S_3] - [S_4]$ is the  basis of the kernel of  $\overline{\Omega}$ we get, without loss of generality, that:

\begin{itemize}
  \item  $S_2 \oplus S_3$ is a direct summand of $\Omega(M_1)$,
  \item  $S_4$ is direct summand of $\Omega(M_2)$.
\end{itemize}

Since $P_1$ is the only projective indecomposable module, up to isomorphism, which has $S_2$ as a submodule and, on the other hand,
$S_3$ is a submodule of the projective modules $P_2$ and $ P_4$, analysing the quiver $Q$, we can infer that $M_1$ cannot be indecomposable.

On the other hand, it is easy to see that $\phi(S_2\oplus S_3 \oplus S_4) = 1$. Because $S_2, S_3, S_4 $ belongs to $K_1$, there exist indecomposable modules $M_1$, $M_2$, $M_3$, such that $\Omega(M_i) = S_i$, thus $\phi(M_1\oplus M_2 \oplus M_3) = 2$.

Finally we conclude that $1 = \fidim_2 \left(\frac{\Bbbk Q}{J^{2}}\right) < \fidim_3 \left(\frac{\Bbbk Q}{J^{2}}\right) = 2$.

\end{ej}

The example below shows us that for all $l \geq 2$ there exist radical square zero algebras $A$ such that: 

$$\fidim_{l}(A) < \fidim_{l+1}(A) = \fidim (A). $$ 

\begin{ej} Let $Q$ be the following quiver:

$$\xymatrix{ \cdot_{(0,1)} \ar[d] &  \cdot_{(0,2)} \ar[d] & \cdots & \cdot_{(0,l)} \ar[d] & \cdot_{(0,l+1)} \ar[d] \\
\cdot_{(1,1)} \ar[d] & \cdot_{(1,2)} \ar[d] & \cdots & \cdot_{(1,l)} \ar[d] & \cdot_{(1,l+1)} \ar[d] \\
\cdot_{(2,1)} \ar[d] & \cdot_{(2,2)} \ar[d] & \cdots & \cdot_{(2,l)} \ar[d] & \cdot_{(2,l+1)} \ar[d] \\
\vdots \ar[d] & \vdots \ar[d] &  & \vdots \ar[d] & \vdots \ar[d] \\ 
\cdot_{(k,1)} \ar[d] & \cdot_{(k,2)} \ar[d] \ar[dl] & \cdots  \ar[dl] & \cdot_{(k,l)} \ar[d] \ar[dl] & \cdot_{(k,l+1)} \ar[dl] \\
\cdot_{(k+1,1)} \ar[d] & \cdot_{(k+1,2)} \ar[d] & \cdots & \cdot_{(k+1,l)}\ar[dr] \ar[d] &  \\
\cdot_{(0,1)}  & \cdot_{(0,2)}  & \cdots & \cdot_{(0,l)}  & \cdot_{(0,l+1)} }$$

If $A = \frac{\Bbbk Q}{J^2}$, then $\fidim_s(A) < \fidim_{l+1}(A) = \fidim (A)$ for $s = 1, \ldots , l$.

Let $V_{i}$ be the the following subspaces of $\mathbb{Q}^n$:

\begin{itemize}

\item $V_i$ has basis $B_{i} = \{ e_{(i,j)} : j \in \{0, \ldots ,l+1 \} \}$ for all $i \leq k$ and

\item $V_{k+1}$ has basis $B_{k+1} = \{ e_{(k+1,j)} : j \in \{0, \ldots ,l\} \}$.

\end{itemize}

Then $T(V_{i}) = V_{i + 1}$ for all $i \leq k$ and $T(V_{k+1}) = V_{0}$. Consider the linear transformation $S= T^{k+1}_{\vert V_{k+1}}: V_{k+1} \to V_{k+1}$, then its associated matrix to the basis $B_{k+1}$ is
 
$$\begin{pmatrix}
1 & 1 & 0 & \dots & 0 & 0 \\ 
0 & 1 & 1 &  & \vdots & \vdots \\ 
\vdots & 0 & 1 &  & 0 & \vdots \\ 
\vdots & \vdots & 0 & \ddots & 1 & 0 \\ 
\vdots & \vdots & \vdots &  & 1 & 1 \\ 
0 & 0 & 0 & \dots & 0 & 2
\end{pmatrix} $$

Hence $S^{k+2}$ is invertible and we deduce that $\phi\left(\oplus_{j = 1}^{l} S_{(k+1,j)}\right) = 0$.

Now $T^{k +1 - i}(V_{i}) \subset V_{k+1}$, and by Proposition \ref{Huard1} we have 
$\phi\left(\oplus_{j = 1}^{l+1} S_{(i,j)}\right) \leq k+1 - i$ for all $i < k+1$. In fact the previous inequality is an equality, it follows from the existence of the vector
$$ w_i = \sum_{j = 1}^{\lfloor \frac{l}{2}\rfloor} e_{(i,2j-1)} -  \sum_{j = 1}^{\lfloor \frac{l-1}{2}\rfloor} e_{(i,2j)} \in Ker(T^{k+1 - i}), $$ 
and because $\ker(T^{k+1 - i}) = \langle w_i \rangle \oplus \ker(T^{k - i})$.

On the other hand, the quiver $Q$ has no sources nor sinks, so we conclude that $\phi dim (A) = k+2$. 

Let $M = \oplus_{j \in J} M_{j}$  be an $A$-module where each $M_{j}$ is indecomposable for $j \in J$  and $\phi(M) = k + 2$. By Proposition \ref{nucleos} we have a vector $\alpha w_0 + v \in \langle \bar{\Omega}[M_j]_{j \in J} \rangle$ with $\alpha \neq 0$ and $v \in \ker T^{k}$. 

For $1 \leq s \leq l-1$ there is an unique idecomposable $A$-module $N_s$ such that $S_{(0,s)} \subset \Omega(N_s)$. On the other hand, $\Omega(S_{(k+1,l)}) = S_{(0,l)}\oplus S_{(0,l+1)}$, and there exist indecomposable modules $N_l$, $N_{l+1}$ such that $\Omega(N_l) = S_{(0,l)}$, $\Omega(N_{l+1}) = S_{(0,l+1)}$, and for any other indecomposable module $N$, $\Omega(N)\cap (S_{(0,l)}\oplus S_{(0,l+1)}) = \{0\}$. Finally, by the last facts $M$ must have at least $l+1$ non-isomorphic direct summands.
\end{ej}

\bibliographystyle{elsarticle-harv} 

\begin{thebibliography}{10}

\bibitem[ARS]{ARS} M. Auslander, I. Reiten, S. Smal{\o} {\em Representation Theory of Artin Algebras}, Cambridge Studies in Advanced Mathematics \textbf{36} Cambridge University Press (1997).

\bibitem[BH-ZR]{BH-ZR} E. Babson, B. Huisgen-Zimmermann, and R. Thomas,
{\em Generic representation theory of quivers with
relations}, J. Algebra \textbf{322} (6), pp. 1877–1918 (2009).


\bibitem[BMR]{BMR} M. Barrios, G. Mata, G. Rama {\em Igusa-Todorov $\phi$ function for truncated path algebras} preprint.

\bibitem [DH-ZL]{DH-ZL} A. Dugas, B. Huisgen-Zimmermann and J. Learned, {\em Truncated path algebras are homologically transparent. Part I}, Models, Modules and Abelian Groups (R. G ̈obel and B. Goldsmi
th, eds.), de Gruyter, Berlin, pp. 445–461 (2008).

\bibitem[HL]{HL} F. Huard, M. Lanzilotta, {\em Self-injective right artinian rings and Igusa-Todorov functions}, Algebras and Representation Theory, {\bf 16} (3),  pp. 765-770 (2012).

\bibitem[HLM]{HLM} F. Huard, M. Lanzilotta, O. Mendoza. {\em An approach to the finitistic dimension conjecture}, J. Algebra {\bf 319} (9), pp. 3916-3934 (2008).


\bibitem[IT]{IT} K. Igusa, G. Todorov, {\em On finitistic global dimension conjecture for artin algebras}, Representations of algebras and related topics, pp. 201-204, Fields Inst. Commun., {\bf 45}, American Mathematical Society, Providence, RI, (2005).

\bibitem[K]{K} H. Krause, {\em Representations of quivers via reflection functors}, arXiv:0804.1428.

\bibitem[LM]{LM} M. Lanzilotta, G. Mata {\em Igusa-Todorov functions for Artin algebras}, Journal of pure and applied algebra, \textbf{222}(1), pp. 202-212 (2018) 

\bibitem[LMM]{LMM} M. Lanzilotta, E. Marcos, G. Mata: {\em Igusa-Todorov functions for radical square zero algebras}, J. Algebra \textbf{487}, pp. 357-385 (2017).

\bibitem[M]{M} G. Mata: {\em Igusa-Todorov Function on Path Rings} To appear in Bol. Soc. Paran. Mat. http://www.spm.uem.br/bspm/pdf/next/66.pdf

\bibitem[N]{N} T. Nakayama: {\em On Frobeniusean algebras I.} Ann. of. Math., {\bf 40},  pp. 611-633 (1939). 

\bibitem[R]{R} C. M.  Ringel. {\em Representation theory of Dynkin quivers. Three contributions.} Frontiers of Mathematics in China, 11(4), pp. 765–814 (2016).  

\end{thebibliography}

\section*{References}

\end{document}